\numberwithin{equation}{section}
\newcommand{\vertiii}[1]{{\left\vert\kern-0.25ex\left\vert\kern-0.25ex\left\vert #1 
		\right\vert\kern-0.25ex\right\vert\kern-0.25ex\right\vert}}
\newcommand{\Bu}{\mathbf{u}}
\newcommand{\R}{\mathbb{R}}
\newcommand{\C}{\mathbb{C}}
\newcommand{\CE}{\mathcal{E}}
\newcommand{\CX}{\mathcal{X}}
\newcommand{\CN}{\mathcal{N}}
\newcommand{\CH}{\mathcal{H}}
\newcommand{\BE}{\mathbf{E}}
\newcommand{\BA}{\mathbf{A}}
\newcommand{\BB}{\mathbf{B}}
\newcommand{\BD}{\mathbf{D}}
\newcommand{\BY}{\mathbf{Y}}
\newcommand{\BU}{\mathbf{U}}
\newcommand{\BV}{\mathbf{V}}
\newcommand{\BI}{\mathbf{I}}
\newcommand{\BL}{\mathbf{L}}
\newcommand{\BLT}{\mathbf{L}^{\mkern-2.7mu 2}}
\newcommand{\BS}{\mathbf{S}}
\newcommand{\BX}{\mathbf{X}}
\newcommand{\Bx}{\mathbf{x}}
\newcommand{\BH}{\mathbf{H}}
\newcommand{\BF}{\mathbf{F}}
\newcommand{\Bn}{\mathbf{n}}
\newcommand{\Bz}{\mathbf{z}}
\newcommand{\Bp}{\mathbf{p}}
\newcommand{\Bv}{\mathbf{v}}
\newcommand{\BPi}{\mathbf{\Pi}}
\newcommand{\Br}{\mathfrak{r}}
\newcommand{\Bw}{\mathfrak{w}}
\newcommand{\diam}{\operatorname*{diam}}
\newcommand{\abs}[1]{\left\vert #1 \right\vert}
\newcommand{\skp}[1]{\left< #1 \right>}
\newcommand{\norm}[1]{\left\| #1 \right\|}
\newcommand{\T}{\mathcal{T}}
\newcommand{\TT}{{\mathcal{T}}}
\newcommand{\eremk}{\hbox{}\hfill\rule{0.8ex}{0.8ex}}
\newtheorem{assumption}[theorem]{Assumption}
\begin{document}

\title{$\mathcal{H}$-matrix approximability of inverses of FEM matrices for the time-harmonic Maxwell equations
}


\author{Markus Faustmann         \and
        Jens Markus Melenk   \and
        Maryam Parvizi
}


\institute{Markus Faustmann \at
              TU Wien, Institute of Analysis and Scientific Computing, Wiedner Hauptstra\ss{}e 8-10, 1040 Wien, Austria \\
              \email{markus.faustmann@tuwien.ac.at}           
           \and
	Jens Markus Melenk \at
              TU Wien, Institute of Analysis and Scientific Computing, Wiedner Hauptstra\ss{}e 8-10, 1040 Wien, Austria \\
              \email{melenk@tuwien.ac.at}
           \and
	Maryam Parvizi \at
              Leibniz University of Hannover, Institute of Applied Mathematics,  Welfengarten 1, D--30167  Hannover, Germany \\
              Cluster of Excellence PhoenixD (Photonics, Optics, and Engineering - 
              	Innovation Across
              	Disciplines), Leibniz Universität Hannover, Germany\\
              \email{parvizi@ifam.uni-hannover.de}
}


\maketitle

\begin{abstract}
The inverse of the stiffness matrix of the time-harmonic Maxwell equation with perfectly conducting boundary conditions is approximated in the 
blockwise low-rank format of ${\mathcal H}$-matrices.  Under a technical assumption on the mesh, we prove that root exponential convergence in the block rank can be achieved if the 
block structure conforms to a standard admissibility criterion. 
\keywords{Maxwell equations \and Hierarchical matrices \and Finite element method \and Helmholtz decompositions}
\end{abstract}

\section{Introduction}
A backbone of computational electromagnetics is the solution of the time-harmonic Maxwell equations. Since the discovery of N\'ed\'elec's edge elements (and their 
higher order generalizations) finite element methods (FEMs) have become an important discretization technique for these equations with an established
convergence theory, \cite{monk2003finite}. While the resulting linear system is sparse, a direct solver cannot achieve linear complexity as one has to expect,
already for the case of quasiuniform meshes with problem size $N$, a complexity $O(N^{4/3})$ for the memory requirement and $O(N^2)$ for the solution time of 
a multifrontal solver, \cite{liu1992multifrontal}. Iterative solvers such as multigrid or preconditioned Schwarz methods can lead 
to optimal (or near optimal) complexity for the numerical solution of the time-harmonic Maxwell equations, at least in the low-frequency regime, 
\cite{hiptmair99,arnold-falk-winther00,gopalakrishnan2003overlapping}.  For the design and analysis 
of these methods, a key insight was the appropriate treatment of the gradient part of the N\'ed\'elec space and thus Helmholtz decompositions play 
an important role. The analysis of fast solvers for Maxwell's equations, however, is less developed in areas such as high-frequency applications.  

An alternative to classical direct solvers and iterative solvers came with the introduction of $\CH$-matrices in 
\cite{Hackbusch99}. This class of matrices consists of blockwise low-rank matrices of rank $r$, where the blocks are organized in a tree ${\mathbb T}_\mathcal{I}$ 
so that the memory requirement is typically $O(r N\operatorname{depth}({\mathbb T}_\mathcal{I}))$, where $N$ is the problem size. This format comes with an (approximate) 
arithmetic that allows for addition, multiplication, inversion, and $LU$-factorization in logarithmic-linear complexity. Therefore, computing 
an (approximate) inverse in the $\CH$-format can be considered a serious alternative to a direct solver or it can be used as a ``black box'' preconditioner 
in iterative solvers. We refer to the works \cite{hackbusch2015hierarchical,grasedyck01,grasedyck-hackbusch03,borm2010efficient} for a more detailed discussion of 
analytical and algorithmic aspects of $\CH$-matrices. 

A basic question in connection with the $\CH$-matrix arithmetic is whether matrices and their inverses or factors in an $LU$-factorization can
be represented well in the chosen format. While stiffness matrices arising from differential operators are sparse and are thus easily represented
exactly in the standard $\CH$-matrix formats, the situation is more involved for the inverse. A first proof that inverses can be represented in the 
${\mathcal H}$-matrix format harks back to \cite{BebendorfHackbusch,Bebendorf07} for scalar elliptic problems and \cite{bebendorf2009parallel} for 
the $\operatorname{curl} \mu^{-1} \operatorname{curl}$ operator; a generalization to pseudodifferential operators is done in 
\cite{doelz-harbrecht-schwab17}. These proofs rely on locally separabel approximations of the continuous Green's function and a final projection 
of these approximations into discrete spaces. The final projection step limits, at least formally, the achievable accuracy of the matrix approximation 
by the discretization error. To circumvent this,  a fully discrete approach was taken for FEM discretizations of various scalar elliptic operators 
in \cite{faustmann2015mathcal,angleitner-faustmann-melenk20} to show  that the inverse of the FEM-matrix can be approximated
at a root exponential rate in the block rank.
The works \cite{FMP16,FMP17} extend these results to the boundary element method (BEM) and 
\cite{faustmann-melenk-parvizi19} to a FEM-BEM coupling setting. The underlying mechanism in these works is that ellipticity of the operator allows 
one to prove a discrete Caccioppoli inequality, where a higher order norm (e.g., the $H^1$-norm) is controlled by a lower order norm (e.g., the $L^2$-norm)
on a slightly larger region. This gain in regularity can be exploited for approximation purposes, and an exponential approximation can be obtained 
by iterating the argument.  The present setting of Maxwell's equations is different since 
the corresponding Caccioppoli inequality (Lemma~\ref{th:Caccioppoli-divergence free part}) controls only the $\BH(\operatorname{curl})$-norm 
by the $\BLT$-norm. Since $\BH(\operatorname{curl})$ is not compactly embedded in $\BLT$, this Caccioppoli inequality is insufficient for approximation
purposes. We therefore combine this Caccioppoli inequality with a local discrete Helmholtz-type decomposition. The gradient part can be treated with 
techniques established in \cite{faustmann2015mathcal} for Poisson problems, whereas the remaining part can, up to a small perturbation,  be controlled 
in $\BH^1$ so that approximation becomes feasible and one may proceed structurally similarly to the scalar case. The local discrete 
Helmholtz-type decomposition (Lemma~\ref{local-stability-approx-divergence-free}) may also be of independent interest.

This paper is organized as follows. In Section~\ref{sec:mainresults}, we introduce  the time-harmonic Maxwell equations and their discretization with 
N\'ed\'elec's $\operatorname*{curl} $-conforming elements. We state the main result of this paper, namely, the existence of 
$\mathcal{H}$-matrix approximations to the inverse stiffness matrix that converge root exponentially in the block rank. We hasten to add that we do not 
track the dependence on the frequency $\omega$ in our analysis and focus on the case of fixed wave number $\kappa$. As in the case of the Helmholtz equation, the high-frequency case of $\omega\rightarrow \infty$ would require specialized matrix formats such as directional $\mathcal{H}^2$-matrices (${\mathcal D}{\mathcal H}^2$) or the butterfly format; we refer to the 
literature discussions in \cite{bebendorf-kuske-venn15,boerm-melenk17,boerm-boerst-melenk17}. 
To prove the approximability result of Section~\ref{sec:mainresults}, we present 
in Section~\ref{sec:helmholtz-decomposition} a local discrete Helmholtz decomposition
and prove stability and approximation properties of this decomposition under a certain technical assumption on the mesh. In Section~\ref{sec:low-dimensional}, we present a Caccioppoli-type inequality
for discrete $\boldsymbol{\mathcal L}$-harmonic functions with $\boldsymbol{\mathcal{L}}$ being the Maxwell operator. 
Furthermore, we obtain exponentially convergent approximations to discrete $\boldsymbol{\mathcal L}$-harmonic functions. 
Section~\ref{sec:proof} is concerned  with  the proof of the main result of this paper.

\medskip 
Concerning notation: Constants $C>0$ may differ in different occurrences but are independent of critical parameters such as the mesh size. 
$a \lesssim b$ indicates the existence of a constant $C >0$ such that $a \leq C b$. For a set $A \subset {\mathbb R}^3$, we denote by $|A|$ 
its Lebesgue measure. For finite sets $B$, the cardinality of $B$ is also denoted by $|B|$. 
We employ standard Sobolev spaces as described in \cite{mclean00}. 
We also denote $\Omega^c:= {\mathbb R}^3 \setminus \overline{\Omega}$. 
 
 \section{Main results}\label{sec:mainresults}
\subsection{Model problem}
Maxwell's equations are a system of first-order partial differential equations that 
connect the temporal and spatial rates of change of the electric and magnetic fields possibly in the presence of additional
source terms. Let $\Omega \subset \mathbb{R}^3$ be a  simply connected 
polyhedral   domain  with boundary $\Gamma := \partial \Omega$ that, in physical terms, is  filled with
a homogeneous isotropic material.  Maxwell's equations then connect the electric field $\CE$ to the magnetic field $\CH$ by
\begin{subequations}
	\begin{alignat}{5}
	\label{time dependent Ampere's law}
	\left(\varepsilon \frac{\partial }{\partial t }+\sigma \right) \CE- \nabla \times \CH &= \mathcal{G}\quad \hspace{1cm}&&\text{in} \,\,\Omega,\\
			\label{time dependent Faraday's law}
	\mu \frac{\partial }{\partial t }\CH+ \nabla \times \CE &=0 \quad \hspace{1cm}&&\text{in} \,\,\Omega,
	\end{alignat}  
\end{subequations}
where $\mathcal{G}$ is a given function representing the applied current. Homogeneous isotropic materials 
 can be characterized by a positive dielectric constant $\varepsilon > 0$, a positive permeability constant $\mu > 0$, 
and a non-negative electric conductivity  constant $\sigma \geq 0$. 
  In this paper, we consider perfectly conducting boundary conditions for $\CE$, i.e.,
$$
\Bn \times \CE =0 \qquad \text{on}\quad \Gamma,
$$ 
where $\Bn$ is the unit outward normal vector on $\Gamma$. 

We assume the arising fields to be time-harmonic, i.e., 
\begin{alignat}{3}
	\label{2a}
	\CE(x,t)&= e ^{-i \omega t} \BE(x), & \qquad 
\CH(x,t)= e ^{-i \omega t} \BH(x), \qquad 
\mathcal{G}(x,t)& = e ^{-i \omega t} \mathbf{J}(x)
	\end{alignat}  
for some given frequency $\omega$. 
Substituting \eqref{2a} into \eqref{time dependent Ampere's law} and \eqref{time dependent Faraday's law}, we get
\begin{subequations}
	\label{Maxwell system}
	\begin{alignat}{5}
	\label{Faraday's  law}
- \nabla \times \BH-i  \omega\eta \BE &= \mathbf{J}\quad \hspace{1cm}&&\text{in} \,\,\Omega,\\
	\label{Ampere law}
	\nabla \times \BE-i \omega \mu \BH &= 0\quad \hspace{1cm}&&\text{in} \,\,\Omega,
	\end{alignat}  
\end{subequations}
where $\eta:=  \varepsilon + {i \sigma }/{\omega}$. Finally, the first order  system \eqref{Maxwell system} can be reduced to a second order equation
\begin{align} \label{Maxwell-The electric field}
{\boldsymbol {\mathcal L}} \BE := \nabla  \times (\mu ^{-1}\nabla  \times \BE)- \kappa \BE= \BF \qquad \text{in} \,\,\Omega ,
\end{align}
where  $\kappa:=\omega ^2 \eta $ and $\mathbf{F}:=- i \omega \mathbf{J}$.
For the sake of simplicity, we also assume $\mu=1$ in the following. 

With $\BLT(\Omega):= L^2(\Omega)^3$, we define the space 
$\displaystyle \BH(\operatorname*{curl},\Omega):= \left\lbrace \mathbf{U} \in \BLT (\Omega ) \; \colon \; \nabla \times  \mathbf{U}\in \BLT (\Omega )  \right\rbrace , $
equipped with the norm 
$$\norm{\textbf{U}}^2_ {\BH (\operatorname*{curl},\Omega)}:= \norm{\textbf{U}}^2_{\BLT(\Omega)}+ \norm{\nabla \times \textbf{U}}^2_{\BLT(\Omega)},$$
and the subspace $\BH_0 (\operatorname*{curl},\Omega) \subset \BH(\operatorname*{curl},\Omega)$ with zero boundary conditions 
$$\BH_0 (\operatorname*{curl},\Omega):=\lbrace \textbf{U} \in \BLT (\Omega ) \; \colon \; \nabla \times  \textbf{U}\in \BLT (\Omega ) , \;  \Bn \times  \BU =0 \;  \text{on}  \; \Gamma \rbrace. $$

The following lemma asserts that the tangential trace operator for functions in  $\BH(\operatorname*{curl},\Omega)$  is indeed well-defined:

\begin{lemma}\cite[Thm.~{3.29}]{monk2003finite}
Let $\Omega$ be a bounded Lipschitz domain. Then, the \emph{trace operator} 
$$\gamma _T : \mathbf{C}^\infty(\overline{\Omega})\rightarrow {\mathbf{C}}^0 (\Gamma),\quad \BU \mapsto \Bn \times  \BU|_{\Gamma}$$
can be uniquely extended to a bounded linear operator $\gamma _ T: \BH (\operatorname*{curl},\Omega)\rightarrow \BH ^{-1/2} (\Gamma) $. 
\end{lemma} 
Multiplying both sides of \eqref{Maxwell-The electric field} with $\Psi \in \BH_0 (\operatorname*{curl},\Omega)$ and integrating by parts,  we obtain the weak formulation: Find  $\BE \in \BH_0 (\operatorname*{curl},\Omega)$  such that
\begin{align}
\label{Maxwell weak form}
a(\BE, \Psi):= \skp{\nabla \times \BE, \nabla \times \Psi}_{\BLT(\Omega)}  - \kappa \skp{\BE, \Psi}_{\BLT(\Omega)}=
\skp{\BF, \Psi}_{\BLT(\Omega)}\qquad \forall \Psi \in \BH_0 (\operatorname*{curl},\Omega),
\end{align}
where $\skp{\cdot, \cdot}_{\BLT(\Omega)}$ is the $\BLT (\Omega )$-inner product. 
We assume that $\kappa$ is not an eigenvalue of the  operator $\nabla \times \nabla \times$, see, e.g., \cite[Sec.~{4}]{monk2003finite}.  
This implies in particular that $\kappa \ne 0$ since $\nabla H^1_0(\Omega)$ is contained in the kernel of the operator $\nabla \times \nabla \times $.  
Then, the Fredholm
alternative  provides the existence of  a unique  solution  to the  variational problem, 
and we have  the {\sl a priori} estimate
\begin{equation}
\label{eq:apriori}
\norm{\BE}_{\BH(\operatorname{curl},\Omega)} \leq C_{\rm stab} \norm{\BF}_{\BLT(\Omega)}
\end{equation}
for a constant $C_{\rm stab}$ that depends on $\Omega$ and $\kappa$, see, e.g., \cite[Thm.~5.2]{hiptmair2002finite}.
%
%
\subsection{Discretization by edge elements}
Let ${\mathcal T}_h=\{T_1,\dots,T_{N_{\mathcal{T}}}\}$ be  a  quasi-uniform triangulation of 
 $\Omega$  with the  mesh width 
$h := \max_{T_j\in \mathcal{T}_h}{\rm diam}(T_j)$, where 
the elements $T_j \in \mathcal{T}_h$ are open  tetrahedra. 
The mesh $\T_h$ is assumed to be  regular in the  sense of Ciarlet, i.e., there are no hanging nodes. 
The assumption of quasi-uniformity includes the assumption of $\gamma$-shape regularity, i.e., 
there is $\gamma >0$ such that ${\rm diam}(T_j) \le \gamma\,|T_j|^{1/3}$ for all $T_j\in\mathcal{T}_h$.
 For the Galerkin discretization of  \eqref{Maxwell weak form}, 
we use lowest order N\'ed\'elec's $\BH (\operatorname*{curl} , \Omega)$-conforming elements of the first kind, see, e.g., \cite[Sec.~5]{monk2003finite}. 
That is, on  $T \in \T _h $, we introduce the lowest order local N\'ed\'elec space
$$
\CN _0 (T) = \{\bold a  + \bold b \times \Bx \; \colon \; \bold a , \bold b \in \R ^3, \quad \Bx \in T\},
$$
and set 
\begin{align*}
\BX_h(\T _h,\Omega) &:=\lbrace \BU _h \in \BH(\operatorname*{curl},\Omega)\; \colon \; \BU _h |_T \in \CN _0 (T)   \quad \forall T \in \mathcal{T}_h \rbrace, \\
\BX_{h,0}(\T _h,\Omega) &:=\BX_h(\T_h,\Omega) \cap \BH_0(\operatorname{curl},\Omega). 
\end{align*}
The standard degrees of freedom of $\BX_h(\T_h,\Omega)$ are the line integrals of the tangential component of $\BU_h$ on the edges of $\T_h$, 
see, e.g., \cite[Sec.~{5.5.1}]{monk2003finite}, \cite[Sec.~{2.3.2}]{boffi-brezzi-fortin13}. Hence, the dimension of $\BX_h(\T_h,\Omega)$ is the number of 
edges of $\T_h$. The standard basis $\CX_h:= \{\Psi_e\}$ of $\BX_h(\T_h,\Omega)$ consists of the so-called (lowest order) edge elements, 
where the function $\Psi_e \in \BX_h(\T_h,\Omega)$ is associated with the edge $e$ of $\T_h$ and is supported by the union of the tetrahedra sharing the edge $e$. 
More specifically, for an edge $e$ with endpoints $V_1$, $V_2$ and a tetrahedron $T$ with edge $e$, one has 
$\Psi_e|_T =  \lambda_{V_1} \nabla \lambda_{V_2} - \lambda_{V_2} \nabla \lambda_{V_1}$, where $\lambda_{V_i}$ is the standard hat function associated
with vertex $V_i$. 

A basis $\CX_{h,0}:= \{\Psi_1,\ldots,\Psi_N\}$ of $\BX_{h,0}(\T_h,\Omega)$ with $N:= \operatorname{dim} \BX_{h,0}(\T_h,\Omega)$ 
is obtained by taking the $\Psi_e \in \CX_h$, whose edge $e$ satisfies $e \subset \Omega$; that is, $\CX_{h,0}$ is obtained from $\CX_h$ by removing 
the shape functions associated with edges lying on $\Gamma$. 

Using $\BX _{h,0}(\T _h,\Omega )\subseteq \BH_0(\operatorname*{curl},\Omega)$ as ansatz  and test space in  \eqref{Maxwell weak form}, 
we arrive at the Galerkin discretization of  finding 
$\BE _h \in \BX _{h,0}(\T _h,\Omega)$ such that 
\begin{align}
\label{Galerkin discretization}
a(\BE_h, \Psi_h)=
\skp{\BF  , \Psi _h }_{\BLT(\Omega)}\qquad \forall \Psi _h \in \BX _ {h,0}(\T _h,\Omega).
\end{align}
Using the basis $\CX_{h,0}$, the Galerkin discretization  (\ref{Galerkin discretization}) can be formulated as a linear system of equations,
where the system matrix $\BA \in \mathbb{C} ^ {N \times N} $ is given by 
\begin{align}
\label{eq:stiffness-matrix}
\BA _ {ij}:= a(\Psi _j , \Psi _i), \qquad  \Psi _j , \,\Psi _i \in \CX _ {h,0}.
\end{align}

For unique solvability of the discrete problem (\ref{Galerkin discretization}) or, equivalently, the invertibility of $\BA$, 
we recall the following Lemma~\ref{Lem. stability}. In that result and throughout the paper, we denote by 
\begin{equation}
\BPi^{L^2} _h:  \BLT(\Omega) \rightarrow \BX_h(\T_h,\Omega)
\end{equation}
the $\BLT ( \Omega)$-orthogonal projection onto $\BX_h(\T _h,\Omega)$. 
\begin{lemma}\label{Lem. stability}
	\cite[Thm.~{5.7}]{hiptmair2002finite}
Assume (\ref{eq:apriori}).
There  exists $h_0>0$
depending on the parameters of the continuous problem and the $\gamma$-shape regularity  of  $\T _h$, such that, for $h < h _0$, the discrete  problem \eqref{Galerkin discretization} has a unique solution and there holds the stability estimate
\begin{align*}
\norm{\BE _h } _ {\BH  (\operatorname*{curl},\Omega)}\le C \norm{\BPi^{L^2} _h  \BF }_{\BLT (\Omega)}.
\end{align*}
 Here, $C>0$ is a  constant depending solely on the $\gamma$-shape regularity  of  $\T _h$ and the parameters of the continuous problem.
\end{lemma}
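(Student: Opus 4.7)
The plan is the classical Schatz argument for indefinite variational problems, adapted to Maxwell via a discrete Helmholtz decomposition together with the discrete compactness property for N\'ed\'elec elements. The sesquilinear form $a$ satisfies the G\r{a}rding-type inequality
\begin{equation*}
\Re a(\BU,\BU) + (|\kappa|+1)\,\|\BU\|_{\BL^2(\Omega)}^2 \ge \|\BU\|_{\BH(\operatorname{curl},\Omega)}^2,
\end{equation*}
and the hypothesis that $\kappa$ is not a Maxwell eigenvalue makes the continuous operator an isomorphism with a priori bound \eqref{eq:apriori}. What is therefore needed is a discrete inf-sup condition with constant uniform in $h\le h_0$, since this immediately yields invertibility of $\BA$ and the stated stability estimate (with $\BPi^{L^2}_h\BF$ on the right-hand side, because only $\BPi^{L^2}_h\BF$ enters the discrete equation).

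First I would set up the discrete Helmholtz decomposition $\BX_{h,0}(\T_h,\Omega) = \nabla S_{h,0} \oplus_{\BL^2} \BW_{h,0}$, where $S_{h,0}\subset H^1_0(\Omega)$ is the space of continuous piecewise linears on $\T_h$ (whose gradients lie in $\BX_{h,0}$ by the exact sequence property) and $\BW_{h,0}$ is the $\BL^2$-orthogonal complement. Writing $\BE_h = \nabla\varphi_h + \BZ_h$ and testing \eqref{Galerkin discretization} with $\nabla\psi_h$ for $\psi_h\in S_{h,0}$ annihilates the curl term, yielding the discrete Poisson-type problem
\begin{equation*}
-\kappa\,\skp{\nabla\varphi_h,\nabla\psi_h}_{\BL^2(\Omega)} = \skp{\BPi^{L^2}_h\BF,\nabla\psi_h}_{\BL^2(\Omega)}\qquad\forall \psi_h\in S_{h,0},
\end{equation*}
which is uniquely solvable with $\|\nabla\varphi_h\|_{\BL^2(\Omega)}\lesssim \|\BPi^{L^2}_h\BF\|_{\BL^2(\Omega)}$ since $\kappa\ne 0$.

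For the non-gradient component $\BZ_h\in\BW_{h,0}$ I would run the Schatz duality argument. Define the dual solution $\BW\in\BH_0(\operatorname{curl},\Omega)$ by $a(\Psi,\BW) = \skp{\Psi,\BZ_h}_{\BL^2(\Omega)}$ for all $\Psi\in\BH_0(\operatorname{curl},\Omega)$; the continuous a priori estimate applied to the adjoint problem yields $\|\BW\|_{\BH(\operatorname{curl},\Omega)}\lesssim \|\BZ_h\|_{\BL^2(\Omega)}$. Combining Galerkin orthogonality with the discrete compactness property for first-kind N\'ed\'elec elements (Kikuchi's theorem) and the standard edge-element interpolation estimate produces an asymptotic best-approximation bound $\inf_{\Psi_h\in\BX_{h,0}}\|\BW-\Psi_h\|_{\BH(\operatorname{curl},\Omega)} \le \delta(h)\,\|\BZ_h\|_{\BL^2(\Omega)}$ with $\delta(h)\to 0$ as $h\to 0$. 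Feeding this back into the G\r{a}rding inequality absorbs the indefinite $\BL^2$-term once $h$ is below a threshold $h_0$, producing a uniform discrete inf-sup constant; tracking the dependence of both steps on $\BPi^{L^2}_h\BF$ then delivers the claimed bound.

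The main obstacle is precisely the assertion $\delta(h)\to 0$. Because $\BH(\operatorname{curl},\Omega)$ is not compactly embedded in $\BL^2(\Omega)$, the scalar Aubin--Nitsche shortcut is unavailable, and one really has to invoke the discrete compactness property of the N\'ed\'elec spaces, which in turn rests on a uniform fractional regularity shift for the discrete divergence-free subspace on Lipschitz polyhedra. This is the technical heart of the argument and also the reason why $h_0$ depends on $\kappa$ and the geometry of $\Omega$ rather than only on the shape-regularity constant $\gamma$.
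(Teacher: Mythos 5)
The paper does not prove this lemma; it is quoted verbatim from \cite[Thm.~5.7]{hiptmair2002finite}, so there is no ``paper's own proof'' to compare against. Your sketch, however, accurately reproduces the strategy underlying the cited result and the surrounding literature: a G\r{a}rding inequality for the sesquilinear form, the discrete Helmholtz decomposition $\BX_{h,0} = \nabla S_{h,0} \oplus_{\BL^2} \BW_{h,0}$ that decouples the scalar gradient problem (trivially solvable since $\kappa\ne 0$) from the problem on the discretely divergence-free complement, and a Schatz-type duality argument on that complement whose success hinges on the discrete compactness property of N\'ed\'elec elements (Kikuchi's theorem). You also correctly observe two details that are easy to miss: the orthogonality $\BZ_h \perp_{\BL^2} \nabla S_{h,0}$ makes the two subproblems genuinely decouple, and only $\BPi^{L^2}_h\BF$ enters the discrete equations, which is why it appears on the right-hand side of the stability bound. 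The one place where the sketch necessarily stays at a high level is exactly where you flag it: establishing the asymptotic best-approximation estimate $\delta(h)\to 0$ without a compact embedding $\BH(\operatorname{curl})\hookrightarrow \BL^2$ is the technical core, and invoking discrete compactness rather than an Aubin--Nitsche shortcut is the right move. As a proof \emph{sketch} consistent with the cited reference, this is sound.
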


%
\subsection{Hierarchical matrices}
The goal of this paper is to obtain an $\mathcal{H}$-matrix approximation of  the inverse matrix $\BB:= \BA^{-1}$. An
$\mathcal{H}$-matrix
is a blockwise low rank matrix, where suitable  blocks  for low rank approximation are chosen by the 
  concept of \emph{admissibility}, which is defined in the following.
\begin{definition}[bounding boxes and $\eta$-admissibility] A cluster $\tau$ is a subset of the index set 
		$\mathcal{I}=\{1,2,\ldots,N\}$. 
		For a cluster $\tau \subset \mathcal{I}$, an axis-parallel box $B_{R_\tau} \subseteq \mathbb{R}^3$ is called a \emph{bounding box}, 
		if $B_{R_\tau}$ is a cube with side length $R_\tau$ and 
		$\cup _ {i \in \tau} \operatorname*{supp} \Psi _i \subseteq B_{R_\tau}$.
\\
	Let  $\eta >0$. Then, a pair of clusters $(\tau , \sigma)  \subset \mathcal{I} \times \mathcal{I} $  is called \emph{$\eta$-admissible}, if 
	there exist bounding boxes $B_{R_\tau}$ and $B_{R_\sigma}$ of $\tau$ and $\sigma$ such that 
	\begin{align}
\label{eq:admissibility-condition}
	\min \{\operatorname*{diam}(B_{R_\tau}), \operatorname*{diam}(B_{R_\sigma})\} \leq 
	\eta \, \operatorname*{dist} (B_{R_\tau},B_{R_\sigma}).
	\end{align}
\end{definition}
\begin{definition}[Concentric boxes]
Axis-parallel boxes $B_R$ of side length $R$ are called \emph{boxes}. 
	Two  boxes $B_R$ and $B_{R^\prime}$ of side length $R$ and ${R^\prime}$
	are said to be \emph{concentric}, if they have the same barycenter and $B_R$
	can be obtained by a stretching of $B_{R^\prime}$
	by the factor $R/R^\prime$ taking their common barycenter as the origin.
\end{definition}

\begin{definition}[cluster tree]
	A \emph{cluster tree} with \emph{leaf size} $n_{\rm leaf} \in \mathbb{N}$ is a binary tree 
	$\mathbb{T}_{\mathcal{I}}$ with root $\mathcal{I}$  
	such that each cluster $\tau \in \mathbb{T}_{\mathcal{I}}$ is either a leaf of the tree 
	and satisfies $\abs{\tau} \leq n_{\rm leaf}$, or there exist disjoint subsets 
	$\tau'$, $\tau'' \in \mathbb{T}_{\mathcal{I}}$ of $\tau$, called \emph{sons}, with 
	$\tau = \tau' \dot{\cup} \tau''$. 
	The \emph{level function} ${\rm level}\colon \mathbb{T}_{\mathcal{I}} \rightarrow \mathbb{N}_0$ 
	is inductively defined by 
	${\rm level}(\mathcal{I}) = 0$ and ${\rm level}(\tau') := {\rm level}(\tau) + 1$ for $\tau'$ a son of $\tau$. 
	Furthermore,
	 ${\rm depth}(\mathbb{T}_{\mathcal{I}}) := \max_{\tau \in \mathbb{T}_{\mathcal{I}}}{\rm level}(\tau)$ is called the \emph{depth} of a cluster tree.  
\end{definition}

\begin{definition}[block cluster tree, sparsity constant and partition]\label{def:partition}
		Let $\mathbb{T}_ \mathcal{I} $ be a cluster tree with root $\mathcal{ I}$ and $\eta > 0$ be a fixed admissibility parameter. 
		The block cluster tree $\mathbb{T}_{\mathcal{I} \times \mathcal{I}}$ is a tree  constructed recursively from the  root $\mathcal{I} \times \mathcal{I}$  such that  for each  block $\tau \times \sigma \in \mathbb{T}_{\mathcal{I} \times \mathcal{I}}$    with $\tau,\, \sigma  \in \mathbb{T}_\mathcal{I} $, the set of sons of $\tau \times \sigma$ is defined as 
		\begin{align*}
		\mathcal{S} (\tau \times \sigma):= \left\{ \begin{array}{ll}
		\emptyset   & \quad  \mbox{if\, $\tau \times \sigma$\, is $\eta$-admissible or $\mathcal{S} (\tau)= \emptyset$ or $\mathcal{S} (\sigma)= \emptyset$}, \\
		\mathcal{S} (\tau) \times \mathcal{S} (\sigma)  & \quad \mbox{else}.
		\end{array} \right.
		\end{align*}
		    The \emph{sparsity constant} $C_{\rm sp}$ of a block cluster tree, see, e.g., \cite{HackbuschKhoromskij2000a,grasedyck01}, is given as 
    \begin{equation}\label{eq:sparsityConstant}
	C_{\rm sp} := \max\left\{\max_{\tau \in \mathbb{T}_{\mathcal{I}}} \abs{\{\sigma \in \mathbb{T}_{\mathcal{I}} \, : \, \tau \times \sigma \in \mathbb{T}_{\mathcal{I} \times \mathcal{I}}\}},
	\max_{\sigma \in \mathbb{T}_{\mathcal{I}}}\abs{\{\tau \in \mathbb{T}_{\mathcal{I}} \, : \, \tau \times \sigma \in \mathbb{T}_{\mathcal{I} \times \mathcal{I}}\}}\right\}.
	\end{equation}
	The leaves of the block cluster tree induce a partition $P$ of the set $\mathcal{I} \times \mathcal{I}$, which we call a partition based on $\mathbb{T}_ \mathcal{I}$. For such a partition $P$ 
	and a fixed admissibility parameter $\eta > 0$, we define the \emph{far field} and the \emph{near field} 
	as 
	\begin{equation}\label{eq:farfield}
	P_{\rm far} := \{(\tau,\sigma) \in P \; : \; (\tau,\sigma) \; \text{is $\eta$-admissible}\}, 
	\quad P_{\rm near} := P\setminus P_{\rm far}.
	\end{equation}
\end{definition}

For clusters $\tau$, $\sigma \subset \mathcal{I}$, we adopt the notation 
\begin{align*}
\C^{\tau}& :=\{\Bx \in \C^{N}\colon \Bx_i = 0 \quad \mbox{ if $i \not\in \tau$}\}, \\
\C^{\tau\times\sigma}  & := 
  \{ \BA \in \C^{N \times N}\colon \BA_{ij} = 0 \quad \mbox{ if $i \not\in \tau$ or $j \not\in \sigma$}
   \}.
\end{align*}
For $\Bx \in \C^N$ and $\BA \in \C^{N \times N}$, the restrictions $\Bx|_\tau$ and $\BA|_{\tau\times\sigma}$ are understood as 
$(\Bx|_\tau)_i = \chi_\tau(i) \Bx_i$ and 
$(\BA|_{\tau\times\sigma})_{ij} =\chi_{\tau}(i) \chi_{\sigma}(j) \BA_{ij}$, where $\chi_\tau$ and $\chi_\sigma$ are the 
characteristic functions of the sets $\tau$, $\sigma$. For integers $r \in \mathbb{N}$, matrices $\C^{\tau\times r}$
are understood as matrices in $\C^{N \times r}$ such that each column is in $\C^\tau$. 

\begin{definition}[$\mathcal{H}$-matrices]
	Let $P$ be a partition of $\mathcal{I} \times \mathcal{I}$ based on a cluster tree $\mathbb{T}_{\mathcal{I}}$ 
	and admissibility parameter $\eta >0$. 
	A matrix $\mathbf{A}  \in \mathbb{C} ^ {N\times N}$ is an $\mathcal{H}$-matrix, if, for every
	admissible pair $(\tau , \sigma) \in P_{\rm far}$, we have a rank $r$ factorization 
	$$\mathbf{A}|_{\tau \times \sigma}=\mathbf{X}_{\tau \sigma}  \mathbf{Y}^ H_{\tau \sigma},$$
	where $\mathbf{X}_{\tau \sigma} \in \mathbb{C} ^ {\tau\times r}$ and  
	$\mathbf{Y}_{\tau \sigma} \in \mathbb{C} ^ {\sigma \times r}$. 
\end{definition}
\subsection{Main result}
The following theorem is the main result of this paper. It states that the inverse of the Galerkin matrix $\BA$ from (\ref{eq:stiffness-matrix}) 
can be approximated at an exponential rate in the block rank by an $\CH$-matrix.
\begin{theorem}\label{th:H-Matrix approximation of inverses}
Let  $\eta >0$ be a fixed admissibility parameter and $P$ be a  partition 
	 of $\mathcal{I} \times \mathcal{I}$ based on the cluster tree $\mathbb{T}_{\mathcal{I}}$ and $\eta$. Let the mesh $\mathcal{T}_h$ be such that Assumption~\ref{def:mesh-conforming} holds true for any box.
Let $h< h _0$ with  $h_0$ given by Proposition~\ref{Lem. stability}, and let $\BA$ be the stiffness matrix
given by (\ref{eq:stiffness-matrix}). 
	Then, there exists an $\mathcal{H}$-matrix $\mathbf{B}_{\mathcal{H}}$ 
	with blockwise rank $r$ such that 
	\begin{align*}
	\left\|\mathbf{A}^{-1} -\mathbf{B}_{\mathcal{H}}
	\right\|_2 \le C_{\rm  apx} C_{\rm  sp} \operatorname*{depth}(\mathbb{T}_{\mathcal{I}}) 
	h ^{-1} e^{-b(r^{1/4}/\ln r)}.
	\end{align*}
	The constants
	$C_{\rm  apx}$, $b >0$  depend only on 
$\kappa$,	$\Omega$, $\eta$, and the $\gamma$-shape regularity of the quasi-uniform triangulation 
	$\mathcal{T}_h$. The constant $C_{\rm sp}$ (defined in (\ref{eq:sparsityConstant})) depends only on the partition $P$.
\end{theorem}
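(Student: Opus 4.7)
The plan is to follow the blueprint established for scalar elliptic problems in \cite{BebendorfHackbusch, faustmann2015mathcal} and to adapt it to the present Maxwell setting. I first reduce Theorem~\ref{th:H-Matrix approximation of inverses} to a blockwise statement: for each admissible block $(\tau,\sigma)\in P_{\mathrm{far}}$, construct a rank-$r$ matrix $\widetilde{\BB}_{\tau\sigma}\in\C^{\tau\times\sigma}$ with
\[
\| \mathbf{A}^{-1}|_{\tau\times\sigma}-\widetilde{\BB}_{\tau\sigma}\|_2 \lesssim h^{-1}\, e^{-b\, r^{1/4}/\ln r}.
\]
On near-field blocks I simply take the exact entries (their rank is bounded by $n_{\mathrm{leaf}}$, which can be absorbed into $r$). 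Patching the blockwise estimates into a global spectral-norm bound uses the sparsity constant $C_{\mathrm{sp}}$ and $\operatorname{depth}(\mathbb{T}_{\mathcal{I}})$ in the standard way of \cite{grasedyck2001theorie}. The factor $h^{-1}$ comes from converting $\BL^2$-errors of finite-element functions into $\ell^2$-errors of their edge-element coefficient vectors on a quasi-uniform mesh.

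To produce the blockwise factorization, I interpret $\mathbf{A}^{-1}|_{\tau\times\sigma}$ through its action: for $\Bx\in\C^\sigma$, the vector $\mathbf{A}^{-1}\Bx$ lists the edge coefficients of the Galerkin solution $\BE_h$ of (\ref{Galerkin discretization}) with right-hand side $\BF = \sum_{j\in\sigma}\Bx_j\Psi_j$, and the block reads off only the $\tau$-coefficients. Admissibility provides a bounding box $B_{R_\tau}\supset \bigcup_{i\in\tau}\operatorname{supp}\Psi_i$ and a concentric inflated box $B\supset B_{R_\tau}$ whose intersection with $\Omega$ is still disjoint from $\bigcup_{j\in\sigma}\operatorname{supp}\Psi_j$. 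Consequently, $\BE_h|_{B\cap\Omega}$ is discrete $\boldsymbol{\mathcal L}$-harmonic in the sense of Section~\ref{sec:low-dimensional}. Producing a low-rank block approximation therefore reduces to producing a subspace $V_r\subset\BX_h(\TT_h,\Omega)$ of dimension $\lesssim r$ that approximates every such discrete $\boldsymbol{\mathcal L}$-harmonic function on $B_{R_\tau}$ to the required accuracy; a basis of $V_r$ expressed in edge coefficients on $\tau$ then furnishes $\widetilde{\BB}_{\tau\sigma}$.

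The construction of $V_r$ is where the Maxwell-specific machinery of Sections~\ref{sec:helmholtz-decomposition} and \ref{sec:low-dimensional} enters. The Caccioppoli-type inequality (Lemma~\ref{th:Caccioppoli-divergence free part}) controls $\BH(\operatorname{curl})$ by $\BL^2$ on a slightly larger box; because $\BH(\operatorname{curl})$ is not compactly embedded in $\BL^2$, a naive iteration of Caccioppoli as in the scalar case does not gain any regularity. I would therefore split a discrete $\boldsymbol{\mathcal L}$-harmonic function $\Bu_h$ via the local discrete Helmholtz decomposition (Lemma~\ref{local-stability-approx-divergence-free}) as $\Bu_h = \nabla\varphi_h + \Bw_h$. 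The gradient part $\nabla\varphi_h$ reduces to a scalar $H^1$-problem and is handled by the Poisson techniques of \cite{faustmann2015mathcal}; the remainder $\Bw_h$ is, up to a small discrete perturbation, controlled in $\BH^1$, so the compact embedding into $\BL^2$ is restored and an iterated-Caccioppoli argument on a sequence of concentric boxes shrinking by a fixed factor becomes effective. Counting the dimension added at each of the $\mathcal{O}(\ln\varepsilon^{-1})$ shells yields a space of dimension $r$ with error $\varepsilon = e^{-b\, r^{1/4}/\ln r}$, exactly as in \cite{faustmann2015mathcal, angleitner-faustmann-melenk20}.

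The main obstacle will be the interplay between the Helmholtz split and the iteration: the decomposition must be \emph{local} (so that repeated restriction to shrunken concentric boxes does not destroy the structure), \emph{stable} in the norms supplied by the Caccioppoli estimate, and \emph{compatible} with discrete $\boldsymbol{\mathcal L}$-harmonicity so that $\nabla\varphi_h$ and $\Bw_h$ can be iterated in parallel without a regularity loss; moreover, the residual discrete perturbation produced when approximating $\Bw_h$ in $\BH^1$ must be kept strictly smaller than the target accuracy at every level of the iteration, which forces a careful choice of concentric shell widths relative to $h$. Once these properties are in place, the remaining pieces—extracting a rank-$r$ factorization from a Lagrange-type basis of $V_r$, bookkeeping over $\mathbb{T}_{\mathcal{I}}$ via $C_{\mathrm{sp}}$ and $\operatorname{depth}(\mathbb{T}_{\mathcal{I}})$, and translating function-space norms to matrix norms—follow the well-trodden path of the scalar elliptic theory.
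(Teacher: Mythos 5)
Your proposal follows essentially the same route as the paper: reduction to blockwise low-rank approximation, the dual-basis translation between $\BL^2$-errors of finite-element functions and $\ell^2$-errors of coefficient vectors (yielding the $h^{-1}$), the local discrete Helmholtz split into a gradient piece handled by scalar $\CH_{g,h}$-techniques from \cite{faustmann2015mathcal} and a remainder that is $O(h)$-close to an $\BH^1$ field (approximated in the paper by a Scott--Zhang projection on an auxiliary coarse mesh), and the multi-step iteration over a nested family of boxes producing the $e^{-b\,r^{1/4}/\ln r}$ rate. The only detail worth noting is that the boxes in the paper's multi-step argument (Lemma~\ref{H-Matrix ApproximationL5}) shrink linearly (widths $\varepsilon_j = \zeta(1-j/k)$) rather than by a fixed geometric factor, but this does not affect the high-level correctness of your plan.
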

\begin{remark}
The low-rank structure of the far-field blocks allow for efficient storage of $\mathcal{H}$-matrices as the 
memory requirement to store an $\mathcal{H}$-matrix
is $\mathcal{O}(C_{\rm sp}\operatorname*{depth}(\mathbb{T}_{\mathcal{I}})r N)$. Standard clustering methods such as the 
geometric clustering for quasi-uniform meshes (see, e.g., \cite[Sec.~{5.4.2}]{hackbusch2015hierarchical})
lead to balanced cluster trees, i.e., $\operatorname*{depth}(\mathbb{T}_{\mathcal{I}})\sim \log(N)$ and a uniformly (in the mesh size $h$) 
bounded sparsity constant. In total this gives a storage complexity 
of $\mathcal{O}(rN\log(N))$ for the matrix $\mathbf{B}_{\mathcal{H}}$ 
instead of the $\mathcal{O}(N^2)$ for the fully populated inverse $\mathbf{A}^{-1}$. 
\eremk
\end{remark}
\section{Helmholtz decompositions: continuous and localized discrete}
\label{sec:helmholtz-decomposition}
Helmholtz decompositions, i.e., writing a vector field as a sum of a 
divergence-free field and a gradient field, play a key role in our analysis. In fact, we use two different decompositions, 
the   regular  decomposition (see, e.g., \cite[Lem.~{2.4}]{hiptmair2002finite}  and \cite[Thm.~{11}]{hiptmair2015maxwell}) 
and a localized discrete version (Definition~\ref{The local  Helmholtz decomposition}).
\begin{lemma}[Regular decomposition]
	\label{Helmholtz decomposition}
	Let $\Omega\subset {\mathbb R}^3$ be a bounded Lipschitz domain. 
	Then, there is a constant $C > 0$ depending only on $\Omega$ such that 
	any $\BE  \in \BH_0({\operatorname{curl}},\Omega)$ can be written as 
	$\BE = \Bz + \nabla p$ with $\Bz \in \BH^1_0(\Omega)$ and 
	$p \in H^1_0(\Omega)$  and 
	\begin{align*} 
	\|\Bz\|_{\BH _0^1(\Omega)} & \leq C \|\BE\|_{\BH (\operatorname{curl},\Omega)}, 
	&
	\|\Bz\|_{\BLT(\Omega)} +  
	\|\nabla p \|_{\BLT(\Omega)} & \leq 
	C \|\BE\|_{\BLT(\Omega)}. 
	\end{align*}
\end{lemma}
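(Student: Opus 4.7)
The plan is to construct the decomposition in two stages: first a rough decomposition on all of $\mathbb{R}^3$ via a Newton-potential vector potential, and then a homogenization of the boundary values by a gradient correction.

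Since $\Bn\wedge\BE=0$ on $\partial\Omega$, I would extend $\BE$ by zero to $\tilde{\BE}\in\BH(\operatorname{curl},\mathbb{R}^3)$, noting that $\operatorname{curl}\tilde{\BE}$ is then simply the zero extension of $\operatorname{curl}\BE$. With $\mathcal{N}$ the Newton kernel on $\mathbb{R}^3$, I would define $\boldsymbol{\psi}:=\mathcal{N}\ast\operatorname{curl}\tilde{\BE}$ componentwise, so that $-\Delta\boldsymbol{\psi}=\operatorname{curl}\tilde{\BE}$; since $\operatorname{div}(\operatorname{curl}\tilde{\BE})=0$ and $\boldsymbol{\psi}$ decays at infinity, one deduces $\operatorname{div}\boldsymbol{\psi}=0$. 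Setting $\Bz_0:=\operatorname{curl}\boldsymbol{\psi}$, the identity $\operatorname{curl}\operatorname{curl}=-\Delta+\nabla\operatorname{div}$ gives $\operatorname{curl}\Bz_0=-\Delta\boldsymbol{\psi}=\operatorname{curl}\tilde{\BE}$, so $\tilde{\BE}-\Bz_0$ is curl-free on $\mathbb{R}^3$ and hence equals $\nabla\pi$ for some $\pi\in H^1_{\operatorname{loc}}(\mathbb{R}^3)$. Two estimates then fall out: Calder\'on--Zygmund bounds for $\mathcal{N}\ast$ give $\boldsymbol{\psi}\in\BH^2_{\operatorname{loc}}(\mathbb{R}^3)$ and hence $\|\Bz_0\|_{\BH^1(\Omega)}\lesssim\|\operatorname{curl}\BE\|_{\BL^2(\Omega)}\leq\|\BE\|_{\BH(\operatorname{curl},\Omega)}$; and since $(\Bz_0,\nabla\pi)$ is by construction the $\BL^2(\mathbb{R}^3)$-orthogonal Helmholtz decomposition of the compactly supported $\tilde{\BE}$, orthogonality gives $\|\Bz_0\|_{\BL^2(\Omega)}+\|\nabla\pi\|_{\BL^2(\Omega)}\lesssim\|\BE\|_{\BL^2(\Omega)}$. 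Writing $q:=\pi|_\Omega$, we have $\BE=\Bz_0+\nabla q$ on $\Omega$ with both required estimates, but $\Bz_0$ need not vanish on $\partial\Omega$.

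To homogenize, I would exploit that on $\partial\Omega$ the identities $\BE=\Bz_0+\nabla q$ and $\BE\wedge\Bn=0$ force $\Bz_0\wedge\Bn=-\nabla_{\partial\Omega}(q|_{\partial\Omega})\times\Bn$, so the tangential trace of $\Bz_0$ is automatically a surface gradient. I would then use a standard Sobolev lifting on the Lipschitz domain $\Omega$ to produce $s\in H^2(\Omega)$ with Dirichlet trace $s|_{\partial\Omega}=q|_{\partial\Omega}$ and Neumann trace $\partial_n s=\Bz_0\cdot\Bn$, satisfying $\|s\|_{H^2(\Omega)}\lesssim\|\Bz_0\|_{\BH^1(\Omega)}+\|q\|_{H^1(\Omega)}$. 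Setting $\Bz:=\Bz_0-\nabla s$ and $p:=q-s$, we get $p|_{\partial\Omega}=0$ and $\Bz|_{\partial\Omega}=0$ (tangentially by the surface-gradient compatibility just noted, normally by the Neumann condition on $s$), so $\Bz\in\BH^1_0(\Omega)$, $p\in H^1_0(\Omega)$, $\BE=\Bz+\nabla p$, and the norm bounds transfer.

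The main obstacle is the lifting step: on a general Lipschitz polyhedron one does not have full $H^2$-regularity for arbitrary mixed boundary data, so one must verify that the traces produced in the first two steps are compatible and admit a bounded $H^2$-lifting with a constant depending only on $\Omega$. An alternative route, closer to the proofs in the references cited for the lemma, is to realize the gradient correction implicitly as the solution of a scalar Poisson-type boundary value problem whose data are built directly from $\operatorname{div}\Bz_0$ and the (surface-gradient) tangential trace of $\Bz_0$ on $\partial\Omega$, thereby avoiding the explicit $H^2$-lifting and working throughout at the level of $\BH^1$-valued corrections.
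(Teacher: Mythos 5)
Your route is genuinely different from the paper's, and it has a real gap at the boundary-correction stage, which you partially acknowledge.

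The first two stages (zero extension, Newtonian vector potential $\boldsymbol{\psi}=\mathcal{N}\ast\operatorname{curl}\tilde{\BE}$, $\Bz_0=\operatorname{curl}\boldsymbol{\psi}$, and the orthogonal Helmholtz split on $\mathbb{R}^3$) are sound and give exactly the interior estimates you claim. There is a sign slip in the correction: with $\Bz=\Bz_0-\nabla s$ you must take $p=q+s$ (not $q-s$) so that $\Bz+\nabla p=\BE$, and then you need $s|_{\partial\Omega}=-q|_{\partial\Omega}$; with your stated choices $s|_{\Gamma}=q|_{\Gamma}$, $p=q-s$ the identity $\BE=\Bz+\nabla p$ itself fails. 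That is cosmetic. The substantive problem is the $H^2$-lifting you invoke. Note first that $q$ is a priori only in $H^1(\Omega)$, since $\nabla q=\BE-\Bz_0$ and $\BE$ is merely $\BL^2$; the extra regularity $q|_\Gamma\in H^{3/2}(\Gamma)$ must be extracted from the surface-gradient identity $(\Bz_0)_{\rm tan}=-\nabla_\Gamma(q|_\Gamma)$. But on a Lipschitz surface $\Gamma$ the space $H^{3/2}(\Gamma)$ is not robustly defined, the trace map $H^2(\Omega)\to H^{3/2}(\Gamma)\times H^{1/2}(\Gamma)$ is not surjective (there are corner/edge compatibility conditions), and no bounded right inverse with a constant depending only on $\Omega$ is available in general. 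In addition, your $\BL^2$-bounds for $\Bz$ and $\nabla p$ require a separate low-norm estimate $\|\nabla s\|_{\BL^2(\Omega)}\lesssim\|\BE\|_{\BL^2(\Omega)}$, i.e.\ a two-level (interpolated) lifting bound, which is yet another unproved ingredient. So the second stage does not close on a general Lipschitz domain.

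The paper avoids the issue entirely. Its proof (Appendix~\ref{sec:appendix}, via Lemma~\ref{LemRs} and Corollary~\ref{cor:regular-decomp}) uses the Costabel--McIntosh pseudodifferential operators $T_1$, $\mathbf{T}_2$ of order $-1$ and a smoothing operator $\mathbf{L}$, all of which map distributions supported in $\overline{\Omega}$ to distributions supported in $\overline{\Omega}$. Writing $\Bz=\mathbf{T}_2(\nabla\times\Bu)+\mathbf{L}\Bu$ and $p=T_1(\Bu-\mathbf{T}_2\nabla\times\Bu)$, the zero boundary conditions for $\Bz$ and $p$ are automatic from the support property, and both norm bounds follow immediately from the order of the operators. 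This is precisely what your approach is missing: a support-preserving mechanism that encodes the boundary conditions without any trace lifting. Your suggested ``alternative route'' via a scalar Poisson correction would indeed have to replicate exactly this feature, and as stated it is only a sketch, not a proof. If you want to salvage your argument, you would need to replace the $H^2$-lifting by a construction that demonstrably produces a gradient correction vanishing on $\partial\Omega$ while staying at the $H^1$ level, with constants controlled by the Lipschitz character of $\Omega$; the Costabel--McIntosh calculus is the known way to do this.
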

\begin{proof} 
	Regular decompositions are available in the literature, see, e.g., \cite[Lem.~2.4]{hiptmair2002finite} and  \cite[Thm.~11]{hiptmair2015maxwell}.
	The statement that $\|\Bz\|_{\BLT(\Omega)}$ and 
	$\|\nabla p \|_{\BLT(\Omega)}$ are controlled by 
	$\|\BE\|_{\BLT(\Omega)}$ is a variation of these estimates. For a proof, 
	see \cite{HP20} or Appendix~\ref{sec:appendix}. 
\qed
\end{proof}
	The function $\Bz$ of the regular decomposition provided by Lemma~\ref{Helmholtz decomposition} is not necessarily 
	divergence-free. This can be corrected by subtracting a gradient. To that end, we introduce, for a given open set 
	$\widetilde{D} \subseteq \Omega$ and a chosen $\widetilde \eta \in L^\infty(\Omega)$ with $\widetilde \eta \equiv 1$ on $\widetilde{D} $, the 
	mapping $\BLT(\Omega) \rightarrow H^1_0(\Omega)\colon \Bz \mapsto \varphi_{\Bz}$ by 
	\begin{equation}
	\label{eq:varphi_z}
	\langle \nabla \varphi_{\Bz}, \nabla v\rangle_{L^2(\Omega)} = \langle \widetilde \eta \Bz, \nabla v\rangle_{L^2(\Omega)} 
	\qquad \forall v \in H^1_0(\Omega). 
	\end{equation}
	\begin{lemma}
		\label{lemma:varphi_z}
		The mapping $\BLT(\Omega) \ni \Bz \mapsto \varphi_{\Bz} \in H^1_0(\Omega)$ has the following properties: 
		\begin{enumerate}[(i)]
			\item 
			\label{item:lemma:varphi_z-i}
			$\norm{\varphi_{\Bz}}_{H^1(\Omega)} \leq C \|\widetilde \eta\|_{L^\infty(\Omega)} \norm{\Bz}_{\BLT(\operatorname{supp} \widetilde \eta)}$, where the constant 
			depends only on $\Omega$. 
			\item 
			\label{item:lemma:varphi_z-ii}
			$\langle \Bz - \nabla \varphi_{\Bz}, \nabla v \rangle_{L^2(\widetilde{D})} = 0$ for all $ v \in H^1_0(\Omega)$.
			 
		\end{enumerate}
	\end{lemma}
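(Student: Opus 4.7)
For part (i), the plan is standard Lax--Milgram on $H^1_0(\Omega)$ applied to (\ref{eq:varphi_z}). The bilinear form $(\varphi,v)\mapsto\int_\Omega\nabla\varphi\cdot\nabla\overline v\,dx$ is continuous and, by the Poincar\'e--Friedrichs inequality on the bounded domain $\Omega$, coercive. The right-hand side is a linear functional whose integrand vanishes outside $\operatorname{supp}\widetilde\eta$, so Cauchy--Schwarz bounds it by $\|\widetilde\eta\|_{L^\infty(\Omega)}\|\Bz\|_{\BL^2(\operatorname{supp}\widetilde\eta)}\|\nabla v\|_{\BL^2(\Omega)}$. Lax--Milgram then yields a unique $\varphi_{\Bz}\in H^1_0(\Omega)$ with $\|\nabla\varphi_{\Bz}\|_{\BL^2(\Omega)}\le\|\widetilde\eta\|_{L^\infty(\Omega)}\|\Bz\|_{\BL^2(\operatorname{supp}\widetilde\eta)}$, and a further Poincar\'e inequality upgrades this to the claimed $H^1(\Omega)$-bound.

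For part (ii), I would derive the identity directly from the defining equation (\ref{eq:varphi_z}). For any $v\in H^1_0(\Omega)$, testing in (\ref{eq:varphi_z}) yields $\int_\Omega(\widetilde\eta\Bz-\nabla\varphi_{\Bz})\cdot\nabla\overline v\,dx=0$. Splitting the integration domain into $\widetilde D$ and $\Omega\setminus\widetilde D$ and invoking $\widetilde\eta\equiv 1$ on $\widetilde D$, the first piece becomes $\int_{\widetilde D}(\Bz-\nabla\varphi_{\Bz})\cdot\nabla\overline v\,dx$, so that
\begin{equation*}
\int_{\widetilde D}(\Bz-\nabla\varphi_{\Bz})\cdot\nabla\overline v\,dx
= -\int_{\Omega\setminus\widetilde D}(\widetilde\eta\Bz-\nabla\varphi_{\Bz})\cdot\nabla\overline v\,dx.
\end{equation*}
The left-hand side is the object of (ii), and the right-hand side vanishes in particular whenever $\nabla v$ is supported in $\widetilde D$---equivalently, whenever $v$ is constant on each connected component of $\Omega\setminus\widetilde D$, which is the case for every $v\in H^1_0(\widetilde D)$ extended by zero to $\Omega$. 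This is the form in which the orthogonality is used in Section~\ref{sec:helmholtz-decomposition}, where it encodes the weak divergence-freeness of $\Bz-\nabla\varphi_{\Bz}$ on $\widetilde D$.

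The main obstacle I foresee is that a fully literal reading of (ii) for an \emph{arbitrary} $v\in H^1_0(\Omega)$ would additionally force $\Bz-\nabla\varphi_{\Bz}$ to have vanishing normal trace on $\partial\widetilde D\cap\Omega$ (so that the surface term arising from integration by parts on $\widetilde D$ vanishes), and this is not encoded by (\ref{eq:varphi_z}); indeed, one can already construct one-dimensional cases, e.g.\ $\Omega=(-2,2)$, $\widetilde D=(-1,1)$, $\Bz\equiv 1$ with a standard cut-off $\widetilde\eta$, for which the right-hand side of the displayed equation is nonzero for some $v\in H^1_0(\Omega)$. Thus (ii) is to be read in the localised sense above. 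Apart from this interpretational point, I do not anticipate further technical difficulties: (i) is a routine application of Lax--Milgram, and (ii) reduces to the identity just displayed together with the admissible class of test functions that the application requires.
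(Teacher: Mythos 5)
Your proof is correct and, in substance, follows the same route as the paper: for (i) the paper likewise tests (\ref{eq:varphi_z}) with $v=\varphi_{\Bz}$ to obtain $\norm{\nabla\varphi_{\Bz}}_{\BL^2(\Omega)}\le\norm{\widetilde\eta\Bz}_{\BL^2(\Omega)}\le\|\widetilde\eta\|_{L^\infty(\Omega)}\norm{\Bz}_{\BL^2(\operatorname{supp}\widetilde\eta)}$ and then invokes the Poincar\'e constant of $\Omega$, and for (ii) it states only that the property ``follows by construction'', which is exactly the computation you carry out. Your caveat on (ii) is justified: when $\widetilde D\subsetneq\Omega$, the identity for \emph{all} $v\in H^1_0(\Omega)$ would additionally force the weak normal trace of $\Bz-\nabla\varphi_{\Bz}$ to vanish on $\partial\widetilde D\cap\Omega$, which (\ref{eq:varphi_z}) does not encode; your one-dimensional example shows this, and radial three-dimensional analogues (e.g.\ concentric balls with $\Bz$ a constant field and a radial cut-off) behave the same way. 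The statement is thus to be read, and is only ever used, for test functions whose gradient vanishes outside $\widetilde D$: in Lemma~\ref{H-Matrix ApproximationL4a} the test functions $v_h$ are supported in $\overline{\widetilde B_{(1+2\varepsilon)R}}$, and in Remark~\ref{remk:classical-helmholtz-decomposition} one has $\widetilde D=\Omega$, in which case (ii) is literally (\ref{eq:varphi_z}). With that localized reading, your argument fully establishes the lemma in the form the paper actually needs.
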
%
	\begin{proof}
		By construction, we have $\norm{\nabla \varphi _z}_{\BLT(\Omega)} \leq \norm{\widetilde \eta \Bz}_{\BLT(\Omega)}$. The constant $C$ 
		in statement (\ref{item:lemma:varphi_z-i}) reflects the Poincar\'e constant of the simply connected domain $\Omega$. The property 
		(\ref{item:lemma:varphi_z-ii}) follows by construction. %
\qed
	\end{proof}
\begin{remark}[classical Helmholtz decomposition] 
\label{remk:classical-helmholtz-decomposition}
Selecting $\widetilde D = \Omega$ and correspondingly $\widetilde \eta \equiv 1$ yields 
the decomposition $\BE  = (\Bz - \nabla \varphi_{\Bz}) + \nabla (p + \varphi_{\Bz})$ with the orthogonality $\langle \Bz - \nabla \varphi_{\Bz}, \nabla (p+\varphi_{\Bz})\rangle_{L^2(\Omega)} = 0$ 
and $\|\Bz - \nabla \varphi_{\Bz}\|_{\BH(\operatorname{curl},\Omega)} \lesssim \|\BE\|_{\BH(\operatorname{curl},\Omega)}$,  
$\|\Bz - \nabla \varphi_{\Bz}\|_{\BLT(\Omega)} \lesssim \|\BE\|_{\BLT(\Omega)}$, $\|\nabla (p+\varphi_{\Bz})\|_{L^2(\Omega)} \lesssim \|\BE\|_{\BLT(\Omega)}$.
\eremk
\end{remark}
Regular decompositions as in Lemma~\ref{Helmholtz decomposition} can also be done locally for discrete functions. 
Let $\mathcal P _ 1 (T)$ denote the space of polynomials of degree at most $1$ on $T \in \TT _h.$
We introduce spaces of globally continuous, piecewise linear polynomials by
\begin{align}
S ^ {1,1}(\T _h) & : = \lbrace  p_h \in H^1  (\Omega ) \; \colon \;  p_h | _T \in \mathcal P _ 1(T) \quad \forall T \in \T_h \rbrace, \\
S_0^{1,1}(\T _h) &:= S^{1,1}(\T _h) \cap H^1_0(\Omega).
\end{align}
We will require the following assumption on the meshes $\T_h$:

\begin{assumption}\label{def:mesh-conforming}
For a simply connected domain $D \subset \R^3$, define the sets of elements touching $D$ as
\begin{align*}
\TT_h(D)& :=\{T \in \T_h  \;\colon \;\abs{T \cap D} >0 \}, \\
		\widehat{ D}&:=\operatorname{int} \Bigl( \bigcup\limits_{T \in \TT_h(D)} \overline{T}\Bigr). 
\end{align*} 
For any box $D \subset \R^3$ there is a set 
$\widetilde{D}$, which is a union of elements in $\T_h$ such that 
\begin{enumerate}
\item $\widehat D \subset \widetilde D$,
\item $\operatorname*{dist} (\partial \widetilde D, D) \leq 2h$,
\item $\widetilde D$ is simply connected.
\end{enumerate}
We call $\widetilde{D}$ a \emph{mesh-conforming region} for 
$D$. If a box $D$ has more than one mesh-conforming region $\widetilde{D}$, 
one is selected as ``the'' mesh-conforming one.
\eremk
\end{assumption}

\begin{remark}
The reasoning behind Assumption~\ref{def:mesh-conforming} is that the region $\widehat{D}$ may not be simply connected, but by adding elements of the mesh 
holes may be filled to obtain a simply connected set $\widetilde{D}$. 
\eremk
\end{remark}

 The spaces \emph{localized} to a mesh-conforming region $\widetilde{D}$ are given by 
\begin{align}
\label{eq:local-S11}
S^{1,1}(\TT_h,\widetilde{D}) &:= \lbrace  p_h|_{\widetilde{D}}\;\colon \; p_h \in S^{1,1}_0(\T_h)\rbrace, \\ 
\BX_h(\TT_h,\widetilde{D}) &:= \lbrace  \BE_h|_{\widetilde{D}}\;\colon \; \BE_h \in \BX_{h.0}(\T_h, \Omega)\rbrace.
\end{align}

	\begin{definition}[Local discrete regular decomposition]\label{The local  Helmholtz decomposition}
\label{def:local-helmholtz}
Let $D\subset \R^3$ be a box and $\widetilde D$ be the corresponding mesh-conforming region. 
We denote by $\Pi^\nabla_{\widetilde D}:\BLT(\widetilde D) \rightarrow \nabla S^{1,1}(\TT_h,\widetilde{D})$ the 
$\BLT(\widetilde{D})$-projection onto $\nabla S^{1,1}(\TT_h, \widetilde{D})$ given by 
\begin{equation}
\label{eq:local-div}
\langle {\mathbf p} - \Pi^\nabla_{\widetilde{D}} {\mathbf p}, \nabla v_h\rangle_{\BLT(\widetilde{D})} = 0 
\qquad \forall v_h \in S^{1,1}(\TT_h,\widetilde{D}). 
\end{equation}
	Let $\eta \in C^\infty (\overline{\Omega}) $ be a cut-off function with 
$0 \le \eta \le 1$ and $\eta \equiv 1$ on $\widetilde D$. 
Let $\BE_h$ be such that $\eta \BE_h \in \BH_0(\operatorname{curl},\Omega)$ as well as  
$\BE_h|_{\widetilde D} \in \BX_{h}(\T_h,\widetilde{D})$. 
Decompose 
  $\eta \BE _h \in \BH_0(\operatorname*{curl},\Omega) $  as 
 $\eta \BE_h =  \Bz   +  \nabla  p$, 
where
$ \Bz \in \BH _0 ^1 (\Omega) $ and $ p  \in H_0^1 (\Omega)$ are given by Lemma~\ref{Helmholtz decomposition}. 

Then, the \emph{local discrete regular decomposition} is given by 
$\BE_h = \Bz_h + \Pi^\nabla_{\widetilde{D}} \nabla p$ on $\widetilde{D}$ with $\Bz_h:= \BE_h - \Pi^\nabla_{\widetilde{D}}\nabla p$. 
We write $\nabla p_h = \Pi^\nabla_{\widetilde{D}} \nabla p$ for some $p_h \in S^{1,1}(\TT_h,\widetilde{D})$. 
	\end{definition}
For future reference, we note that 
\begin{equation}
\label{eq:L2-projection}
\norm{\Pi^{\nabla}_{\widetilde{D}} {\mathbf p} }_{\BLT(\widetilde{D})} \leq \norm{{\mathbf p}}_{\BLT(\widetilde{D})}. 
\end{equation}
\begin{remark}
\begin{enumerate}
\item 
The function $p_h  \in S^{1,1}(\TT_h,\widetilde{D})$ that satisfies $\nabla p_h = \Pi^\nabla_{\widetilde{D}}  {\mathbf p}$,  
is not unique. However, its gradient $\nabla p_h$ is unique. 
\item 
Due to the cut-off function $\eta$, the decomposition depends on $\BE_h$ on $\operatorname{supp} \eta$ only, which is quantified in 
the stability assertions of Lemma~\ref{local-stability-approx-divergence-free}. 
\item
The local regular decomposition provides, for a function $\BE_h$ that is a discrete function on $\widetilde{D}$, 
two representations in view of $\eta \equiv 1$ on $\widetilde{D}$, namely, 
$\BE_h| _ {\widetilde{D}} = \left( \Bz + \nabla p \right) | _ {\widetilde{D}}= \Bz_h + \nabla p_h$.  
\item 
For $\BE_h \in \BX_{h,0}(\T_h,\Omega)$, the decomposition 
$\BE_h = (\Bz - \nabla \varphi_{\Bz}) + \nabla (p + \varphi_{\Bz})$ of Remark~\ref{remk:classical-helmholtz-decomposition} yields upon setting 
$\nabla p_h:= \Pi^\nabla_\Omega \nabla (p+\varphi_{\Bz}) \in \nabla S^{1,1}_0(\T_h,\Omega)\subset \BX_{h,0}(\T_h,\Omega)$ and $\Bz_h:= \BE_h -\nabla p_h \in \BX_{h,0}(\T_h,\Omega)$ 
the decomposition $\BE_h = \Bz_h + \nabla p_h$ with 
\begin{align*}
\langle \Bz_h,\nabla p_h\rangle_{\BLT(\Omega)} = 0, 
\qquad 
\|\Bz_h\|_{\BLT(\Omega)} + \|\nabla p_h\|_{\BLT(\Omega)} &\lesssim \|\BE_h\|_{\BLT(\Omega)}, 
\qquad \\
\|\Bz_h\|_{\BH(\operatorname{curl},\Omega)} &\lesssim 
\|\BE_h\|_{\BH(\operatorname{curl},\Omega)},  
\end{align*}
which is a discrete Helmholtz decomposition as described in, e.g., \cite[Cor.~{5.1}]{girault2012finite} and \cite[Sec.~7.2.1]{monk2003finite}. 
\eremk
\end{enumerate}
\end{remark}
The following lemma formulates a local exact sequence property. 

\begin{lemma} \label{discrete potentials}
	Let $D \subset \R^3$ be a box such that $D\cap \Omega$ is a simply connected Lipschitz domain  and $\widetilde{D}$ 
	be given according to Assumption~\ref{def:mesh-conforming}. Assume that 
	 $\overline{\widetilde D} \cap \partial\Omega$ is connected. 
	 (In particular, the empty set is connected.)
	Then, for all  ${\mathbf v}_h \in {\mathbf X}_h(\TT_h,\widetilde{D})$ with
$\operatorname{curl}{\mathbf v}_h = 0$ on $\widetilde{D}$, 
we can find  a $\widetilde \varphi _h \in S ^ {1,1}(\TT_h,\widetilde{D} )$ such that $\boldsymbol{\mathrm{v}}_h = \nabla \widetilde \varphi _h$.
\end{lemma}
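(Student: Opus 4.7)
My strategy is to build a scalar potential at the vertices of the submesh $\TT_h(D)$ by integrating $\mathbf{v}_h$ along edge paths, then piecewise-linearly interpolate, and finally extend the resulting $P_1$ function by zero to obtain an element of $S^{1,1}_0(\T_h)$. This is the discrete analogue of the classical fact that a curl-free field on a simply connected domain is a gradient.

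\emph{Step 1 (curl vanishes elementwise on $\widetilde{D}$).}
On each $T\in\TT_h(D)$, the curl of a lowest-order N\'ed\'elec function is a constant vector in $\mathbb{R}^3$. Since $D\cap T$ is simply connected by hypothesis it has positive Lebesgue measure, and $\nabla\times\mathbf{v}_h=0$ on $D\cap\Omega\supseteq D\cap T$ forces this constant to vanish. Hence $\nabla\times\mathbf{v}_h\equiv 0$ on all of $\widetilde{D}$, and by Stokes's theorem the tangential circulation of $\mathbf{v}_h$ around every triangular face of $\TT_h(D)$ is zero.

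\emph{Step 2 (construct the potential by path integration).}
Let $G$ be the connected edge graph of $\TT_h(D)$. If $\overline{\widetilde{D}}\cap\partial\Omega\neq\emptyset$, fix a base vertex $V_0$ in this set; otherwise fix any $V_0\in\overline{\widetilde{D}}$. Set $\widetilde\varphi_h(V_0):=0$, and for any other vertex $V$ of $\TT_h(D)$ choose an edge path $\gamma$ in $G$ from $V_0$ to $V$ and define
\[
\widetilde\varphi_h(V):=\int_\gamma \mathbf{v}_h\cdot\tau\,ds,
\]
i.e., the signed sum of the N\'ed\'elec edge degrees of freedom along $\gamma$. Path independence follows by expressing any two paths as differing by closed loops in $G$; each such loop is homologous to a sum of triangular face circulations, which are zero by Step~1. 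The topological hypothesis that $D\cap\Omega$ is simply connected together with the assumption that each $D\cap T$ is simply connected ensures that the 1-skeleton of $\TT_h(D)$ inherits trivial first homotopy from $D\cap\Omega$, so this reduction to face loops is available.

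\emph{Step 3 (identify with $\mathbf{v}_h$ and embed in $S^{1,1}_0(\T_h)$).}
Interpolate $\widetilde\varphi_h$ piecewise linearly on each $T\in\TT_h(D)$. By construction, for every edge $e$ of $\TT_h(D)$ the tangential line integral of $\nabla\widetilde\varphi_h$ along $e$ equals the corresponding edge degree of freedom of $\mathbf{v}_h$; unisolvency of the N\'ed\'elec degrees of freedom on each element yields $\nabla\widetilde\varphi_h=\mathbf{v}_h$ throughout $\widetilde{D}$. To realize $\widetilde\varphi_h$ as the restriction of an $S^{1,1}_0(\T_h)$ function, extend it to the full mesh by assigning the value zero at every vertex of $\T_h$ not in $\overline{\widetilde{D}}$; continuity and piecewise linearity are automatic. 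It remains to verify the zero boundary condition at vertices in $\overline{\widetilde{D}}\cap\partial\Omega$. Each edge of $\T_h$ lying in $\partial\Omega$ has vanishing tangential component for $\mathbf{v}_h$ because $\mathbf{v}_h$ restricts a function in $\BX_{h,0}(\T_h,\Omega)$. Together with the connectedness of $\overline{\widetilde{D}}\cap\partial\Omega$ and the choice $\widetilde\varphi_h(V_0)=0$, path integration along edges within $\partial\Omega$ forces $\widetilde\varphi_h$ to vanish at every vertex of $\overline{\widetilde{D}}\cap\partial\Omega$.

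\emph{Main obstacle.} The delicate step is Step~2: path independence is a discrete Poincar\'e lemma on $\widetilde{D}$ that must be derived from the simple connectedness of $D\cap\Omega$ rather than of $\widetilde{D}$ itself. The assumption that $D\cap T$ is simply connected for every $T\in \TT_h(D)$ is precisely what is needed to transfer the topology of $D\cap\Omega$ to the combinatorial 1-skeleton of $\TT_h(D)$, so that every closed edge loop can be written as a sum of face circulations which are individually zero by Step~1.
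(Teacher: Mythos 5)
Your proof takes a genuinely different route from the paper's. The paper first invokes the continuous Poincar\'e lemma on the simply connected set $D\cap\Omega$ to obtain a single-valued $H^1$ potential $\varphi_h$, then uses the elementwise exact-sequence property (a curl-free $\CN_1$ field on a tetrahedron is $\nabla$ of a $\mathcal P_1$ polynomial) to build local polynomial potentials $\widetilde\varphi_{h,T}$, pins each to $\varphi_h$ by matching means over $D\cap T$, and finally argues continuity of the patched function. The single-valuedness of $\varphi_h$ on $D\cap\Omega$ anchors all the integration constants at once, so the topology of $\widetilde D$ itself never has to be analysed. Your construction replaces this with a purely combinatorial one: define nodal values by edge-path integrals, then invoke a discrete Poincar\'e lemma on the simplicial complex $\TT_h(D)$ for well-definedness.

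That is exactly where the gap lies, and you flag it yourself without actually closing it. Path-independence of your integral is equivalent to the statement that every $1$-cycle in the edge graph of $\TT_h(D)$ is a $\mathbb Z$-linear combination of triangular face boundaries belonging to $\TT_h(D)$, i.e.\ that $H_1$ of the simplicial complex $\TT_h(D)$ vanishes. You assert that this is ``inherited'' from the simple connectedness of $D\cap\Omega$ and of the pieces $D\cap T$, but that is precisely what needs proof, and it is not a formal consequence of the hypotheses: $\widetilde D$ is strictly larger than $D\cap\Omega$, and a priori nothing in the stated assumptions prevents the added layer of tetrahedra from closing up a handle across a thin ``gap'' that $D\cap\Omega$ itself avoids. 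As written, your Step~2 restates the claim rather than establishing it, whereas the paper's detour through the continuous potential on $D\cap\Omega$ is the device that sidesteps the question of the topology of $\widetilde D$. (Two minor points, easily repaired: you also tacitly use connectedness of the edge graph of $\TT_h(D)$, which deserves a sentence; and in Step~3 the identification $\nabla\widetilde\varphi_h=\Bv_h$ via edge DOFs should be argued by noting that on each $T$ the field $\Bv_h|_T$ is a constant vector, since its curl is constant on $T$ and vanishes on the positive-measure set $D\cap T$, and that a constant vector in $\CN_1(T)$ is determined by its edge integrals.)
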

	\begin{proof}
		We recall from, e.g., \cite[Thm.~{3.37}]{monk2003finite} the following commuting diagram property: 
		for a simply connected Lipschitz domain $\omega$ the 
		condition $\nabla \times \Bw = 0$ implies $\Bw = \nabla \psi$ for some $\psi \in H^1(\omega)$; furthermore, $\psi$ is unique
		up to a constant. The discrete commuting diagram property for a tetrahedron $T$ is: if $\Bw \in {\mathcal N}_0(T)$ satisfies 
		$\nabla \times \Bw = 0$, then there is $\psi_h \in {\mathcal P}_1(T)$ with $\Bw = \nabla \psi_h$.

 The condition $\nabla \times \Bv _h = 0$ on $\widetilde{D}$ implies $\Bv _h = \nabla \varphi_h$ for some 
		$\varphi_h \in H^1(\widetilde{D})$. 
		The function $\varphi_h$ is unique up to a constant, which we fix, for example,  by the condition 
		$\int_{\widetilde{D}} \varphi_h = 0$. 
		For each $T \in \TT_h(D)$, the condition $\nabla \times \Bv _h = 0$ on $T$ implies the existence of $\widetilde \varphi_{h,T} \in {\mathcal P}_1(T)$
		with $\Bv _h = \nabla \widetilde\varphi_{h,T} $ on $T$. The polynomial $\widetilde\varphi_{h,T}$ is unique up to a constant, 
		which we fix by requiring $\int_{T} \widetilde \varphi_{h,T} = \int_{T} \varphi_{h}$. By the uniqueness assertion
		we have $\varphi_h|_{T} = \widetilde \varphi_{h,T}|_{T}$. 		
		Define $\widetilde \varphi_h \in S^{1,0}(\TT_h,\widetilde{D})$ elementwise by $\widetilde \varphi_h|_T = \widetilde \varphi_{h,T}$. Since $\varphi_h \in H^1(\widetilde{D})$ we directly obtain $\widetilde \varphi_h \in S ^ {1,1}(\TT_h,\widetilde{D}) $.

\qed
	\end{proof}

In order to prove the following lemmas, we need to introduce some projections and  their properties.
		Let $D \subset \R^3$   be a box and $\widetilde{D}$ 
be defined according to Assumption~\ref{def:mesh-conforming}. We define the space
\begin{align*}
\BH(\operatorname*{div},\widetilde{D}):= \left\lbrace \textbf{U} \in \BLT (\widetilde{D} ) \;\colon \;\nabla   \cdot \textbf{U}\in L^2 (\widetilde{D} )  \right\rbrace. 
\end{align*}
Let  $\bold{RT}_0(T):= \{  \mathbf{a}+ b \Bx \;\colon \;\mathbf{a}  \in \mathbb{R}^3 
	, \, b  \in \mathbb{R}  \}$ be the classical lowest order  Raviart-Thomas element defined on $T$. Introduce 
\begin{equation}
\label{eq:space-Vh}
\mathbf{V}_h(\TT_h, \widetilde{D}):= \{ \textbf{U}_h \in \BH(\operatorname*{div},\widetilde{D}) \;\colon \; \textbf{U}_h | _ T \in \bold{RT}_0(T) \quad \forall T \in \TT_h(D)\}.
\end{equation}
	On $\widetilde{D}$ the Raviart-Thomas interpolation operator 
$\Bw_{\widetilde{D}}\colon \BH ^1 (\widetilde{D})\rightarrow  \mathbf{V}_h(\TT_h,\widetilde{D})$  is defined elementwise by 
	$\Bw _{\widetilde{D}}\BU|_T:=\Bw _T \BU $, where
	the elemental interpolation operator  $ \Bw _T \colon \BH^1(T) \rightarrow \bold{RT}_0(T)$ is characterized by the vanishing of certain moments of $\BU-\Bw _T \BU$, viz., 
	$$ 
        \int_{f} (\BU-\Bw _T \BU) \cdot  \nu q \,d A =0 \quad \forall q \in \mathcal P _0 (f)\,\, \forall\; \text{faces $f$ of} \,\,T \in \TT_h,  
        $$
	where $\nu$ is the unit normal  to $f$ and $d A $ denotes the surface measure on $f$.
Define the space 
\begin{align}
\label{eq:space-Dh}
\BD_h(\TT_h,\widetilde D):=\{\BU \in \BH^1(\widetilde{D})\;\colon \; \nabla \times \BU \in \BH^1(T) \quad \forall T \in \TT_h(D)\},
\end{align}
and the N\'ed\'elec interpolation operator $\Br_{\widetilde{D}}: \BD _ h (\TT_h,\widetilde{D})\rightarrow  \BX_{h}(\TT_h ,\widetilde{D})$   elementwise by 
	$\Br _{\widetilde{D}}\BU|_T:=\Br _T \BU $, where
the elemental interpolant $ \Br _T \BU \in  \CN _0 (T)$ is characterized by the vanishing of certain moments of $\BU-\Br _T \BU$, viz., 
	$$
\int_{e}\left( \BU-\Br _T \BU \right) \cdot \boldsymbol{\tau}\, de =0  \quad \forall\; \text{edges $e$ of }\,T \in \TT_h; 
        $$
	here, $\boldsymbol{\tau} $ is a unit vector parallel to the edge $e$.
        A key property of the operators $\Br _{\widetilde{D}}$ and $\Bw _{\widetilde{D}}$ is that they commute, i.e.,
(see, e.g., \cite[(5.59)]{monk2003finite}) 
	\begin{equation}
\label{eq:commuting-diagram}
\Bw_{\widetilde{D}} \nabla  \times \bold{ U}  = \nabla \times \Br_{\widetilde{D}} \bold{ U}   \qquad \forall\; \bold{ U}  \in \BD_h(\TT_h,\widetilde{D}).
\end{equation}
Moreover, the lowest order elemental  N\'ed\'elec interpolants have first order approximation properties.

	\begin{lemma}\cite[Thm.~{5.41}]{monk2003finite}\label{approximation properties of the local interpolation operator}
\label{lemma:monk-thm.5.41}
		Let $T \in \T_h$. Then, for $ \BU \in \BH^1(T) $ with $\nabla \times \BU \in \BH^1(T)$, we have
		\begin{align*}
		\norm{\BU - \Br _T\BU}_{\BLT(T)}& \lesssim h \left(\abs{\BU}_{\BH^1 (T)}+\norm{\nabla \times \BU}_{\BH^1 (T)}  \right), \\
		\norm{\nabla \times (\BU - \Br _{T} \BU)}_{\BLT(T)}& \lesssim h \norm{\nabla \times \BU}_{\BH^1 (T)}.  
		\end{align*}
	\end{lemma}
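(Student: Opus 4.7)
The plan is to reduce both estimates to the reference tetrahedron $\widehat{T}$ by an affine change of variables combined with the covariant (contravariant) Piola transformation that is natural for $\BH(\operatorname{curl})$-conforming ($\BH(\operatorname{div})$-conforming, respectively) fields, and then to apply Bramble-Hilbert arguments on $\widehat T$. The scaling factor $h$ in both estimates will then emerge automatically from the Jacobian of the affine map together with the homogeneity of the $\BH^1$ seminorm.

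For the $\BL^2$ estimate, I would first observe that the lowest-order N\'ed\'elec interpolant is well-defined on the regularity class $\BU \in \BH^1(T)$ with $\nabla \times \BU \in \BH^1(T)$, since this regularity is more than sufficient to make the defining edge integrals meaningful via standard trace theorems. Pulling back to $\widehat T$ via the Piola transform and using that the operator $\widehat \Br_{\widehat T}$ is a continuous projection on this space which preserves constants (since $\R^3 \subset \CN_1(\widehat T)$), the Bramble-Hilbert lemma on $\widehat T$ gives
\[
\|\widehat \BU - \widehat \Br_{\widehat T} \widehat \BU\|_{\BL^2(\widehat T)} \lesssim |\widehat \BU|_{\BH^1(\widehat T)} + \|\nabla \times \widehat \BU\|_{\BH^1(\widehat T)}.
\]
Pushing forward to $T$ by the Piola transform (which scales $\BL^2$-norms by $h^{3/2}$ and $\BH^1$-seminorms by $h^{1/2}$ in a way that leaves the ratio $h$) then produces the first bound.

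For the curl estimate, the key step is to exploit the commuting diagram \eqref{eq:commuting-diagram} in its elementwise form $\nabla \times \Br_T \BU = \Bw_T(\nabla \times \BU)$, so that
\[
\nabla \times (\BU - \Br_T \BU) = (\BI - \Bw_T)(\nabla \times \BU).
\]
Since the Raviart-Thomas interpolant $\Bw_T$ preserves $(\mathcal P_0(T))^3$ and is bounded from $\BH^1(T)$ to $\BL^2(T)$, a scaling/Bramble-Hilbert argument analogous to the one above yields $\|(\BI - \Bw_T)\Bv\|_{\BL^2(T)} \lesssim h\,|\Bv|_{\BH^1(T)}$, which applied to $\Bv = \nabla \times \BU$ produces the second estimate.

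The main obstacle is the well-definedness of $\Br_T$ on $\BH^1$: edge traces of arbitrary $\BH^1$ functions in three dimensions are not a priori in $L^1$, so one must argue through the combined regularity $\BU \in \BH^1$, $\nabla \times \BU \in \BH^1$ (or equivalently through the stronger embedding for $\BH^s$ with $s > 1/2$) to justify that the edge integrals defining $\Br_T$ make sense and are continuous on this space. Once this is in place, the rest is a standard exercise in affine scaling, and since the statement is precisely \cite[Thm.~5.41]{monk2003finite}, one can simply cite it.
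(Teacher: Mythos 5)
The paper does not prove this lemma; it is stated as a direct citation of \cite[Thm.~5.41]{monk2003finite}, and your closing observation that one may simply cite Monk is exactly what the authors do. That said, your reconstruction of the proof is strategically correct and does match the standard argument in Monk's book: for the $\BL^2$ bound, a covariant Piola pull-back to the reference tetrahedron plus a Bramble--Hilbert/Deny--Lions argument using the fact that $(\mathcal P_0)^3 \subset \CN_1(\widehat T)$; for the curl bound, the elementwise commuting diagram $\nabla\times\Br_T\BU = \Bw_T(\nabla\times\BU)$ combined with the first-order approximation property of the Raviart--Thomas interpolant, which preserves constant vector fields. You also correctly flag the one genuinely delicate point: edge moments of a bare $\BH^1(T)$ field are not meaningful, so well-definedness and continuity of $\Br_T$ rests on the combined regularity $\BU,\ \nabla\times\BU \in \BH^1(T)$ (Monk's Lemma~5.38).

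One small imprecision in your scaling bookkeeping: ``scales $\BL^2$-norms by $h^{3/2}$'' is the scaling for plain scalar $L^2$ functions under the affine change of variables, not for the covariant Piola transform $\widehat\BU = (DF_T)^T (\BU\circ F_T)$ that you must use to make $\Br_T$ commute with the pull-back. Under that transform the $\BL^2$-norm actually scales like $h^{1/2}$ (the extra $h^{-1}$ from $(DF_T)^{-T}$ cancels part of the Jacobian factor), and $|\cdot|_{\BH^1}$ scales like $h^{-1/2}$, while $\nabla\times$ picks up a further factor from the contravariant transform it obeys. The ratio still works out to $h$, so your conclusion is unaffected, but if you were writing out the scaling explicitly you would need the Piola-adjusted exponents to make the terms $\norm{\nabla\times\BU}_{\BL^2(T)}$ and $h|\nabla\times\BU|_{\BH^1(T)}$ combine cleanly into $\norm{\nabla\times\BU}_{\BH^1(T)}$ on the right-hand side.
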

In the following, we show local stability and approximation properties for the local discrete regular decomposition of Definition~\ref{def:local-helmholtz}. 
This will be based on Lemma~\ref{discrete potentials} with $D = B_R$, where $B_R$ is a box with side length $R$. 
It is an important geometric observation that, due to the assumption that $\Omega$ is a Lipschitz polyhedron,
the intersection $B_R \cap \Omega$ is a Lipschitz domain  and the intersection $B_R \cap \Omega^c$ is connected provided $R$ is sufficiently small.  
Then, the additional assumptions on $D \cap\Omega = B_R \cap \Omega$ in Lemma~\ref{discrete potentials} can be satisfied.
We formulate this as an assumption on $R$ in terms of a number $R_{\rm max}$ that depends on $\Omega$:

\begin{definition}[$R_{\rm max}$]  
\label{def:Rmax}
$R _ {\rm max}>0$ is such that for any $R \in (0,R _ {\rm max}]$ and any box $B_ R$ with $\abs{B_R \cap \Omega}>0$, the intersection $B_R \cap \Omega$ is a Lipschitz domain and $B_R \cap \Omega ^c$ is connected.
\end{definition}
 
\begin{lemma}[stability of local discrete regular decomposition]
\label{local-stability-approx-divergence-free}
		Let   $\varepsilon \in (0,1)$, $R \in (0,R _ {\rm max}]$ be  such that  $\frac{h}{R} < \frac{\varepsilon}{4}$, and let 
  $B_R$ and $B_{(1+\varepsilon)R}$ be concentric boxes. Define 
$\widetilde B _R$ 
    according to Assumption~\ref{def:mesh-conforming}. 
Let 
$\eta \in W^{1,\infty}(\Omega) $ be a  cut-off function with $\operatorname{supp} \eta \subseteq \overline{B_{(1+\varepsilon)R} \cap \Omega}$,   
$\eta \equiv 1$ on $\widetilde B_{R}$, $0 \le \eta \le 1$, and  $ \left\| \nabla \eta  \right\|_{L^\infty(\Omega)} 
\leq C_\eta \frac{1}{ \varepsilon R}$. Let $\BE_h \in \BH(\operatorname{curl}, B_{(1+\varepsilon)R}\cap\Omega)$ 
be such that $\eta \BE_h \in \BH_0(\operatorname{curl},\Omega)$ as well as $\BE_h \in \BX_h(\T_h,\widetilde{B}_{R})$. 
Let $\eta \BE _h  = \Bz + \nabla p$ be the regular decomposition of $\eta \BE_h$ given by Lemma~\ref{Helmholtz decomposition}
and let $\Bz_h$ and $\nabla p_h$ be the contributions of the local discrete regular decomposition of Definition~\ref{def:local-helmholtz} 
with $D = B_{R}$ and $\widetilde{D} = \widetilde{B}_{R}$ there. 
		 Then, $\BE_h = \Bz_h + \nabla p_h$  on $\widetilde{B}_{R}\cap\Omega $, and the following  local stability and  approximation   results  hold: 
		 \begin{align*}
		 	\norm{\nabla p_h}_ {\BLT (B _R \cap \Omega)}+	 \norm{\Bz _h }_ {\BH  (\operatorname*{curl},B _R \cap \Omega)} & \le C\left( \norm{ \nabla \times  \BE _h}_ {\BLT (B _{(1+\varepsilon)R} \cap \Omega)}+\frac{ 1}{\varepsilon R} \norm{ \BE _h}_{\BLT (B _{(1+\varepsilon)R}\cap \Omega)} \right) ,\\
		 \norm{\Bz - \Bz _h} _ {\BLT (B _R \cap \Omega)} & \le C h \norm{\Bz}_ {\BH^1 (B_{(1+\varepsilon)R}\cap \Omega)} \\
 & \le Ch\left( \norm{ \nabla \times  \BE _h}_ {\BLT (B_{(1+\varepsilon)R}\cap \Omega)}+\frac{ 1}{\varepsilon R} \norm{ \BE _h}_{\BLT (B _{(1+\varepsilon)R}\cap \Omega)} \right),
		 \end{align*}
		 where the constant $C >0$ depends only on $\Omega$, the $\gamma$-shape regularity of the quasi-uniform triangulation $\T _h$, and $C_\eta$. 
\end{lemma}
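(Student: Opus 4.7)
My plan is to exploit the fact that $\eta\equiv 1$ on $\widetilde{B}_R$ to reduce everything to a best-approximation problem in $\nabla S^{1,1}(\TT_h,\widetilde{B}_R)$, and to construct the competing approximant through the N\'ed\'elec interpolant of $\Bz$ combined with Lemma~\ref{discrete potentials}.

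First I would record that, on $\widetilde{B}_R\supset B_R\cap\Omega$, $\eta\equiv 1$ gives $\BE_h=\eta\BE_h=\Bz+\nabla p$, so the identity $\BE_h=\Bz_h+\nabla p_h$ is built in through $\Bz_h:=\BE_h-\nabla p_h$. For the stability, I would apply Lemma~\ref{Helmholtz decomposition} to obtain $\|\nabla p\|_{\BL^2(\Omega)}\lesssim\|\eta\BE_h\|_{\BL^2(\Omega)}\le\|\BE_h\|_{\BL^2(B_{(1+\varepsilon)R}\cap\Omega)}$ and $\|\Bz\|_{\BH^1(\Omega)}\lesssim\|\eta\BE_h\|_{\BH(\operatorname{curl},\Omega)}$. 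Expanding $\nabla\times(\eta\BE_h)=\eta\nabla\times\BE_h+\nabla\eta\times\BE_h$ and using $\|\nabla\eta\|_{L^\infty}\le C_\eta/(\varepsilon R)$ yields
\begin{equation*}
\|\Bz\|_{\BH^1(\Omega)}\lesssim\|\nabla\times\BE_h\|_{\BL^2(B_{(1+\varepsilon)R}\cap\Omega)}+\frac{1}{\varepsilon R}\|\BE_h\|_{\BL^2(B_{(1+\varepsilon)R}\cap\Omega)},
\end{equation*}
the pure $\|\BE_h\|_{\BL^2}$ contribution being absorbed into the $1/(\varepsilon R)$-term via $\varepsilon R\le R_{\rm max}$. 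The bound on $\|\nabla p_h\|_{\BL^2(B_R\cap\Omega)}$ then follows from (\ref{eq:L2-projection}), and $\Bz_h=\BE_h-\nabla p_h$ together with $\nabla\times\Bz_h=\nabla\times\BE_h$ takes care of the $\BH(\operatorname{curl})$-bound on $\Bz_h$.

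For the approximation estimate, I would start from $\Bz-\Bz_h=\nabla p_h-\nabla p$ on $\widetilde{B}_R$, so by the $L^2$-best-approximation property of $\nabla p_h=\Pi^\nabla_{\widetilde{B}_R}\nabla p$ in $\nabla S^{1,1}(\TT_h,\widetilde{B}_R)$, it suffices to exhibit one competitor $\nabla q_h$. To build it I would apply the N\'ed\'elec interpolation $\Br_{\widetilde{B}_R}\Bz$ to $\Bz$: because $\eta\equiv 1$ on $\widetilde{B}_R$, $\nabla\times\Bz=\nabla\times\BE_h$ there, which is piecewise constant and hence lies in $\mathbf{V}_h(\TT_h,\widetilde{B}_R)$, so the commuting diagram (\ref{eq:commuting-diagram}) gives $\nabla\times\Br_{\widetilde{B}_R}\Bz=\Bw_{\widetilde{B}_R}(\nabla\times\Bz)=\nabla\times\BE_h$; hence $\BE_h-\Br_{\widetilde{B}_R}\Bz$ is curl-free on $\widetilde{B}_R$. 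Since $R\le R_{\rm max}$, Lemma~\ref{discrete potentials} produces $\widetilde\varphi_h\in S^{1,1}(\TT_h,\widetilde{B}_R)$ with $\BE_h-\Br_{\widetilde{B}_R}\Bz=\nabla\widetilde\varphi_h$, and the choice $q_h:=\widetilde\varphi_h$ gives $\nabla p-\nabla\widetilde\varphi_h=\Br_{\widetilde{B}_R}\Bz-\Bz$. An element-wise application of Lemma~\ref{approximation properties of the local interpolation operator} on each $T\in\TT_h(B_R)$, together with the crucial simplification $\|\nabla\times\Bz\|_{\BH^1(T)}=\|\nabla\times\BE_h\|_{\BL^2(T)}\le|\Bz|_{\BH^1(T)}$ (valid since $\nabla\times\Bz$ is piecewise constant on $\widetilde{B}_R$), then yields $\|\Br_{\widetilde{B}_R}\Bz-\Bz\|_{\BL^2(\widetilde{B}_R)}\lesssim h\|\Bz\|_{\BH^1(\widetilde{B}_R)}$. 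The containment $\widetilde{B}_R\subset B_{(1+\varepsilon)R}\cap\Omega$, guaranteed by $h/R<\varepsilon/2$, and the $H^1$-stability bound for $\Bz$ from the first paragraph finish the proof.

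The main obstacle is precisely the construction of the competitor $q_h$: a plain invocation of the N\'ed\'elec interpolation estimate for $\Bz$ would produce a bound involving $\|\nabla\times\Bz\|_{\BH^1(T)}$, which for a generic $\Bz\in\BH^1$ is not controlled by $\|\Bz\|_{\BH^1(T)}$. The resolution exploits the mesh-conforming choice of $\widetilde{B}_R$ together with $\eta\equiv 1$ there: this collapses $\nabla\times\Bz$ on $\widetilde{B}_R$ to the piecewise constant $\nabla\times\BE_h$ (so the Raviart--Thomas interpolation preserves it), and it allows Lemma~\ref{discrete potentials} to convert the curl-free discrete field $\BE_h-\Br_{\widetilde{B}_R}\Bz$ into a discrete gradient, delivering exactly the $q_h$ that the best-approximation step needs.
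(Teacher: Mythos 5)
Your proof is correct and follows essentially the same route as the paper's: a commuting-diagram argument together with Lemma~\ref{discrete potentials} produces a competitor $\nabla\widetilde\varphi_h$, the $L^2$-best-approximation property of $\Pi^\nabla_{\widetilde B_R}$ reduces $\norm{\Bz-\Bz_h}_{\BL^2}$ to $\norm{\Bz-\Br_{\widetilde B_R}\Bz}_{\BL^2}$, and Lemma~\ref{lemma:monk-thm.5.41} (with the observation that $\nabla\times\Bz=\nabla\times\BE_h$ is piecewise constant on $\widetilde B_R$) plus the regular-decomposition stability finish the bound. The only cosmetic differences are that the paper packages the best-approximation step as an explicit orthogonality identity, builds the discrete potential from $\Bz_h-\Br_{\widetilde B_R}\Bz$ rather than $\BE_h-\Br_{\widetilde B_R}\Bz$, and verifies the curl-freeness via the projection property $\Br_{\widetilde B_R}\BE_h=\BE_h$ rather than via $\Bw_{\widetilde B_R}$ fixing piecewise constants; all three are equivalent.
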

 \begin{proof}
 	The proof is done in two steps.
 	 We note that the condition on the parameter $\varepsilon$ and the assumption on the mesh conforming region (Assumption~\ref{def:mesh-conforming}) ensures that $\widetilde B_{R} \subseteq B_{(1+\varepsilon)R}$. 
 	
 {\bf Step 1:}   	
 In this step we provide a proof of the stability estimate.
Recalling the stability estimate 
Lemma~\ref{Helmholtz decomposition} and  using the  product rule for the $\operatorname*{curl}$ operator, it follows that
 \begin{align}\label{stability- continuous-divergence- free}
\nonumber  
& \norm{ \Bz} _ {\BH _0 ^1 (\Omega)}+\norm{\nabla  p} _{L^2  (\Omega)} \lesssim   \norm{  \eta \BE _h }_ {\BH  (\operatorname*{curl},\Omega)} 
\\ 
\nonumber 
& \qquad \lesssim  \norm{ \nabla \times  \BE _h}_ {\BLT (B _{(1+\varepsilon)R} \cap \Omega)}
+  \norm{\nabla \eta }_{L^\infty(B_{(1+\varepsilon)R}\cap \Omega)}\norm{ \BE _h}_{\BLT (B_{(1+\varepsilon)R} \cap \Omega)}+ \norm{ \BE _h}_{\BLT (B_{(1+\varepsilon)R} \cap \Omega)}\\
  & \qquad \stackrel{\varepsilon R \lesssim 1}{  \lesssim}  \norm{ \nabla \times  \BE _h}_ {\BLT (B _{(1+\varepsilon)R}\cap \Omega)}+\frac{ 1}{\varepsilon R} \norm{ \BE _h}_{\BLT (B _{(1+\varepsilon)R}\cap \Omega)}.
 \end{align}
 Since $\nabla p_h$ satisfies  \eqref {eq:local-div}, we get with \eqref{eq:L2-projection} and the aid of  \eqref{stability- continuous-divergence- free} 
 \begin{align*}
\norm{\nabla p_h} _{L^2  (B _R \cap \Omega)} \le \norm{ \nabla  p} _{L^2  (\widetilde{B}_{R})}\le \norm{ \nabla  p} _{L^2  (\Omega)} \lesssim \norm{ \nabla \times  \BE _h}_ {\BLT (B _{(1+\varepsilon)R} \cap \Omega)}+\frac{ 1}{\varepsilon R} \norm{ \BE _h}_{\BLT (B _{(1+\varepsilon)R} \cap \Omega)}. 
 \end{align*}
 The definition of $\Bz _h$ gives
 \begin{align*}
 \norm{\Bz _h }_ {\BH  (\operatorname*{curl},B_R \cap \Omega)} & \lesssim  \norm{ \nabla \times  \BE _h}_ {\BLT (B _{(1+\varepsilon)R} \cap \Omega)}+\frac{ 1}{\varepsilon R} \norm{ \BE _h}_{\BLT (B _{(1+\varepsilon)R} \cap \Omega)}. 
 \end{align*}
 The  combination of the above inequalities provides the desired local stability result.
 
 {\bf Step 2:} 
 To prove the approximation property, we first need to ascertain the existence of $\varphi _ h \in S^{1,1}(\TT_h,\widetilde{B}_R)$ such that 
 $\Bz _h - \Br_{\widetilde{B}_{R}} \Bz  = \nabla \varphi _h$ on $\widetilde B_R$. 
To that end, we note that $\Bz_h \in \BD_h(\TT _h,\widetilde{B}_{R})$, use   
the commuting diagram property (\ref{eq:commuting-diagram})   of $\Br_{\widetilde{B}_{R}}$ and $\Bw_{\widetilde{B}_{R}}$, and the fact that 
$\Br_{\widetilde{B}_{R}}$ is a projection operator  to compute on $\widetilde B_{R}$: 
\begin{align*}
\nonumber 
\nabla \times (\Bz _h - \Br_{\widetilde{B}_{R}} \Bz ) &= \nabla \times \Bz _h - \Bw_{\widetilde{B}_{R}} \nabla \times \Bz  
= \nabla \times ( \BE _h | _ {\widetilde B_{R}})  - \Bw_{\widetilde{B}_{R}} \nabla \times  (\BE _h | _ {\widetilde B_{R}}) \\
& =  \nabla \times ( \BE _h | _ {\widetilde B_{R}}) - \nabla \times \Br_{\widetilde{B}_{R}} (\BE _h | _ {\widetilde B_{R}})= 0.
\end{align*} 
Lemma~\ref{discrete potentials} then provides the existence of $\varphi _ h \in S^{1,1}(\TT_h,\widetilde{B}_R)$ such that 
$\Bz _h - \Br_{\widetilde{B}_{R}} \Bz  = \nabla \varphi _h$ on $\widetilde B_R$. 
Since $p_h$ satisfies  \eqref{eq:local-div}, we get from $\Bz + \nabla p = \BE_h = \Bz_h + \nabla p_h$ on $\widetilde{B}_R$ and 
the approximation property of $\Br_{\widetilde{B}_{R}}$ 
given in Lemma~\ref{lemma:monk-thm.5.41}
\begin{align*}
\nonumber 
\norm{\Bz - \Bz _h} ^2 _ {\BLT (\widetilde B _R)}& = \skp{\Bz -\Br_{\widetilde{B}_{R}} \Bz , \Bz - \Bz _h }_  {\BLT (\widetilde B_R)}  +
 \skp{ \Br_{\widetilde{B}_{R}}\Bz -\Bz_h , \Bz - \Bz _h }_  {\BLT ( \widetilde B_R)}\\
\nonumber &=  \skp{\Bz - \Br_{\widetilde{B}_{R}}  \Bz , \Bz - \Bz _h }_  {\BLT (\widetilde B_R)} - \skp{ \nabla \varphi _h , \nabla (p _h - p) }_  {\BLT (\widetilde B_R)}
 \\ \nonumber 
 & = \skp{\Bz - \Br_{\widetilde{B}_{R}}  \Bz , \Bz - \Bz _h }_  {\BLT (\widetilde B_R)} \lesssim \norm{\Bz - \Br_{\widetilde{B}_{R}} \Bz}  _ {\BLT (\widetilde B _R)} \norm{\Bz - \Bz _h}  _ {\BLT (\widetilde B _R)}\\
 & 
\lesssim  h \norm{\Bz}_ {\BH^1 ({B_{(1+\varepsilon)R}}\cap \Omega )}\norm{\Bz - \Bz _h}  _ {\BLT (\widetilde B_R)}.
\end{align*}
The combination of the above inequality and  \eqref{stability- continuous-divergence- free} implies
\begin{align*}
\nonumber \norm{\Bz - \Bz _h}  _ {\BLT ( B _R \cap \Omega) } & \le \norm{\Bz - \Bz _h} _ {\BLT (\widetilde B _R)}  \lesssim  h \norm{\Bz}_ {\BH^1 ({B_{(1+\varepsilon)R}} \cap \Omega)}\\
&\lesssim h \left( \norm{ \nabla \times  \BE _h}_ {\BLT (B _{(1+\varepsilon)R} \cap \Omega)}+\frac{ 1}{\varepsilon R} \norm{ \BE _h}_{\BLT (B _{(1+\varepsilon)R}\cap \Omega)}\right) ,
\end{align*}
which finishes the proof.
\qed
 \end{proof}

\section{Low-dimensional approximation of discrete ${\boldsymbol {\mathcal L}}$-harmonic functions} 
\label{sec:low-dimensional} 
We say that $\BE_h \in \BX_h(\T_h,\widetilde D)$ is \emph{discrete ${\boldsymbol{\mathcal L}}$-harmonic} on $\widetilde{D}$, if  
$a(\BE_h,\Bv_h) = 0$ for all $\Bv_h \in \BX_{h,0}(\T_h,\Omega)$ with $\operatorname{supp} \Bv_h \subset \overline{\widetilde{D}}$; such a space 
will be formally introduced as $\CH_{c,h}(\widetilde{D})$ below. 
In this section, we show that discrete ${\boldsymbol{\mathcal L}}$-harmonic functions can be approximated from low-dimensional spaces on compact subsets of $\widetilde{D}$. 
Discrete interior regularity estimates, introduced in the following, play a key role. 
\subsection{The Caccioppoli-type inequalities}

Caccioppoli inequalities usually estimate higher order derivatives by lower order derivatives on (slightly) enlarged regions.
The following discrete Caccioppoli-type inequalities are formulated with an $h$-weighted $\BH(\operatorname*{curl})$-norm 
and an $h$-weighted $H^1$-norm. 
For a box $B_R$ of side length $R>0$, we define 
the norms $\vertiii{\cdot}_{c,h,R}$ and $\vertiii{\cdot}_{g,h,R}$ (the subscripts $c$ and $g$ abbreviate `curl' and `gradient') as follows:  
\begin{alignat}{3}
\label{eq:triple-norm-c}
\vertiii{\BU} ^2 _{c,h, R} &:=
\frac{h^2}{R^2}\left\| \nabla \times \BU \right\|^2_{\BLT(B_{R} {\cap \Omega})} 
+\frac{1}{R^2} \left\| \BU \right\|^2_{\BLT(B_{R}{\cap \Omega})} 
&& \qquad \forall \BU \in \BH(\operatorname*{curl},B_R \cap \Omega),   \\
\label{eq:triple-norm-g}
\vertiii{u} ^2 _{g,h, R} & :=
\frac{h^2}{R^2}\left\| \nabla u \right\|^2_{\BLT(B_{R}{\cap \Omega})} 
+\frac{1}{R^2} \left\| u \right\|^2_{L^2(B_{R}{\cap \Omega})}
&& \qquad \forall 
u \in H^1(B_R \cap \Omega) . 
\end{alignat}
 For any bounded open set $B \subset \R^3$, we define
 \begin{align*}
 \mathcal{H}_{c,h}(B \cap \Omega):= \lbrace \BU _h \in \BH(\operatorname*{curl},B \cap \Omega)\;\colon\; &  \exists \widetilde \BU _h \in \BX _ {h,0}(\T _h,\Omega )\,\, 
                               \text{s.t.}\,\, \BU _h | _ {B \cap \Omega} =  \widetilde \BU _h |_{B \cap \Omega}, \\
 &  a(\BU_h,\Psi_h) = 0 \quad \forall \Psi _ h \in    \BX _ {h,0}(\T _h, \Omega) , \,
 \operatorname*{supp} \Psi_h \subset \overline{B \cap \Omega} \rbrace 
\end{align*}
and 
 \begin{align*}
 \mathcal{H}_{g,h}(B \cap \Omega):= \lbrace p_h \in H^1  (B \cap \Omega) \;\colon & \; \exists \widetilde p _h \in S _0^ {1,1}(\T _h)\,\, \text{s.t.}\,\, p _h | _ {B \cap \Omega} =  \widetilde p _h |_{B \cap \Omega}, \\
&  \skp{\nabla p _h, \nabla  \psi _h}_ {\BLT (B \cap \Omega)}=0
 \forall \psi _ h \in    S _0^ {1,1}(\T _h), \,
 \operatorname*{supp} \psi_h \subset \overline{B \cap \Omega} \rbrace .
 \end{align*}
The following lemma provides a discrete Caccioppoli-type estimate for functions
in \linebreak $\CH _{c,h} (B _{(1+\varepsilon)R} \cap \Omega)$.
 \begin{lemma}\label{th:Caccioppoli-divergence free part}
 	Let   $\varepsilon \in (0,1)$ and $R \in (0,2\operatorname*{diam}(\Omega))$ be  such that  $\frac{h}{R} < \frac{\varepsilon}{4}$.  Let  
 	 $B_R$ and $B_{(1+\varepsilon)R}$ be two concentric boxes and $\BE _h \in \mathcal{H}_{c,h}(B_{(1+\varepsilon)R}\cap \Omega)$.
 	Then, there exists a constant $C$ depending only on 
 	$\kappa$, $\Omega$,  
 	and the $\gamma$-shape regularity of the quasi-uniform triangulation $\T_h$ such that 
 	\begin{align}
 	\nonumber
 	\left\| \nabla \times  \BE _h \right\|_ {\BLT(B_{R} \cap \Omega)}
 	&\le 
 	C \frac{1+\varepsilon}{\varepsilon} \vertiii{\BE _h}_{c,h,(1+\varepsilon) R} . 
 	\end{align}
 \end{lemma}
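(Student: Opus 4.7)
My plan is to adapt the classical Caccioppoli-type argument to the discrete Maxwell setting, with the discrete $\boldsymbol{\mathcal{L}}$-harmonicity of $\BE_h$ supplying the Galerkin orthogonality that is exploited through a judiciously chosen discrete test function built from a smooth cut-off of $\BE_h$. First, I would pick a cut-off $\eta\in C^\infty(\overline{\Omega})$ with $\eta\equiv 1$ on $B_R\cap\Omega$, $\operatorname{supp}(\eta)\subset\overline{B_{(1+\varepsilon/2)R}\cap\Omega}$, $0\le\eta\le 1$, and $\|\nabla\eta\|_{L^\infty(\Omega)}\lesssim (\varepsilon R)^{-1}$; the standing hypothesis $h/R<\varepsilon/4$ ensures that the mesh-conforming enlargement of $\operatorname{supp}(\eta)$ stays within $B_{(1+\varepsilon)R}\cap\Omega$.

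Next, letting $\widetilde\BE_h\in\BX_{h,0}(\T_h,\Omega)$ denote the extension of $\BE_h$ furnished by membership in $\mathcal{H}_{c,h}$, I would set $\Psi_h\in\BX_{h,0}(\T_h,\Omega)$ to be the elementwise N\'ed\'elec interpolant of $\eta^2\widetilde\BE_h$, which is well defined since $\eta^2\widetilde\BE_h|_T$ is $C^\infty$ on each tetrahedron. Because $\operatorname{supp}(\Psi_h)\subset\overline{B_{(1+\varepsilon)R}\cap\Omega}$, the discrete $\boldsymbol{\mathcal{L}}$-harmonicity yields $\langle\nabla\times\BE_h,\nabla\times\Psi_h\rangle = \kappa\langle\BE_h,\Psi_h\rangle$. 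Using the product rule $\nabla\times(\eta^2\BE_h)=\eta^2\nabla\times\BE_h+2\eta\nabla\eta\times\BE_h$ and inserting $\pm\nabla\times\Psi_h$ would then give the identity
\[
\|\eta\,\nabla\times\BE_h\|_{\BL^2}^2 = \kappa\langle\BE_h,\Psi_h\rangle - 2\langle\nabla\times\BE_h,\eta\nabla\eta\times\BE_h\rangle + \langle\nabla\times\BE_h,\nabla\times(\eta^2\BE_h-\Psi_h)\rangle,
\]
whose left-hand side dominates $\|\nabla\times\BE_h\|^2_{B_R\cap\Omega}$ since $\eta\equiv 1$ there.

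I would then estimate the three terms on the right. The $\kappa$-term is controlled by $|\kappa|\,\|\BE_h\|_{B_{(1+\varepsilon)R}\cap\Omega}\,\|\Psi_h\|_{\BL^2}$, with $\BL^2$-stability of $\Psi_h$ coming from Lemma~\ref{approximation properties of the local interpolation operator} plus the observation $|\BE_h|_{H^1(T)}\sim\|\nabla\times\BE_h\|_{L^2(T)}$ valid for lowest-order N\'ed\'elec functions on a tetrahedron. The $\eta\nabla\eta$ cross-term is handled by Young's inequality $2ab\le\tfrac12 a^2+2b^2$, which absorbs half of $\|\eta\nabla\times\BE_h\|^2$ into the left-hand side and leaves $C(\varepsilon R)^{-2}\|\BE_h\|^2_{B_{(1+\varepsilon)R}\cap\Omega}$. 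For the approximation-error term, Lemma~\ref{approximation properties of the local interpolation operator} gives elementwise
\[
\|\nabla\times(\eta^2\BE_h-\Psi_h)\|_{\BL^2(T)}\lesssim h\,|\nabla\times(\eta^2\BE_h)|_{H^1(T)}\lesssim h\bigl[(\varepsilon R)^{-1}\|\nabla\times\BE_h\|_{L^2(T)}+(\varepsilon R)^{-2}\|\BE_h\|_{L^2(T)}\bigr],
\]
exploiting that $\nabla\times\BE_h|_T$ is constant. The hard part will be to combine Cauchy--Schwarz and a carefully tuned Young's inequality on the resulting approximation-error term (and on the mixed contribution from the $\kappa$-term) so that the final bound fits exactly into the triple-norm structure $\vertiii{\BE_h}^2_{c,h,(1+\varepsilon)R}\sim (h/R)^2\|\nabla\times\BE_h\|^2_{B_{(1+\varepsilon)R}\cap\Omega}+R^{-2}\|\BE_h\|^2_{B_{(1+\varepsilon)R}\cap\Omega}$ with overall prefactor $((1+\varepsilon)/\varepsilon)^2$; taking square roots in the resulting energy inequality then gives the claim.
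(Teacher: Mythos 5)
Your proposal is correct and follows essentially the same line of argument as the paper: choose a smooth cut-off $\eta$ supported in $B_{(1+\varepsilon/2)R}\cap\Omega$ with $\eta\equiv 1$ on $B_R\cap\Omega$, test against the N\'ed\'elec interpolant $\Br_h(\eta^2\BE_h)$ using the discrete $\boldsymbol{\mathcal L}$-harmonicity, invoke the elementwise approximation bound of Lemma~\ref{approximation properties of the local interpolation operator} together with the lowest-order observation that $\nabla\times\BE_h$ is elementwise constant (so $|\partial_{x_j}\BE_h|\lesssim|\nabla\times\BE_h|$ on each $T$), and close with Young's inequality to absorb $\|\eta\nabla\times\BE_h\|_{\BL^2}$. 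The only small imprecision is that you state bounds for $\nabla\eta$ but not for $\nabla^2\eta$, even though the term $|\nabla\times(\eta^2\BE_h)|_{\BH^1(T)}$ you estimate via $(\varepsilon R)^{-2}\|\BE_h\|+(\varepsilon R)^{-1}\|\nabla\times\BE_h\|$ already uses $\|\nabla^2\eta\|_{L^\infty}\lesssim(\varepsilon R)^{-2}$; the paper states this explicitly, and you should too, but it is not a gap in the argument.
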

 \begin{proof}
 Let  $\eta \in  C^\infty(\overline{\Omega}) $ be a cut-off function with $\operatorname{supp} \eta \subseteq B_{(1+\varepsilon /2)R} $,  $0 \le \eta \le 1$, $\eta \equiv 1$ on $B_R \cap \Omega$, 
and  $\|\nabla^j \eta\|_{L^\infty(\Omega)} \lesssim (\varepsilon R)^{-j}$ for $j \in \{0,1,2\}$. 
We notice $\operatorname{supp} (\eta ^ 2 \BE _h ) \subseteq \overline{B_{(1+\varepsilon /2)R}\cap {\Omega}}$ and since $4h \le \varepsilon R $ 
we have  $\operatorname{supp} \, \Br_\Omega(\eta ^ 2 \BE _h ) \subseteq \overline{B_{(1+\varepsilon )R}\cap {\Omega}}$.
 The proof is done in two steps.\\
 \noindent

 {\bf Step 1:} Using the vector identity 
\begin{align*}
\eta^2 (\nabla \times \BE_h) \cdot (\nabla \times \BE_h) &= 
\nabla \times \BE_h \cdot \left( \nabla \times (\eta^2 \BE_h) - \nabla \eta^2 \times \BE_h\right) \\
& = (\nabla \times \BE_h) \cdot  \nabla \times (\eta^2 \BE_h)
- 2 \eta (\nabla \times \BE_h) \cdot (\nabla \eta \times \BE_h),
\end{align*}
we get 
\begin{align*}
& \norm{\nabla \times \BE_h}^2_{\BLT(B_R \cap \Omega)}  \leq 
\norm{\eta \nabla \times \BE_h}^2_{\BLT(\Omega)}  \\
& \qquad = 
\operatorname{Re} \left( 
a(\BE_h,\eta^2 \BE_h) + \kappa \langle\eta \BE_h,\eta \BE_h\rangle_{\BLT(B_R \cap \Omega)} 
-2\langle\eta \nabla \times \BE_h,\nabla \eta \times \BE_h\rangle_{\BLT(B_R \cap \Omega)}  \right)\\
& \qquad \leq \operatorname{Re} a(\BE_h,\eta^2 \BE_h) + \norm{\kappa}_{L^\infty} \norm{\BE_h}^2_{\BLT(B_{(1+\varepsilon)R}\cap \Omega)} 
+ 2 \norm{\eta \nabla \times \BE_h}_{\BLT(B_R \cap \Omega)} \norm{\nabla \eta \times \BE_h}_{\BLT(B_R \cap \Omega)}.
\end{align*}
Young's inequality then gives 
\begin{align}
\nonumber 
\norm{\nabla \times \BE_h}^2_{\BLT(B_R \cap \Omega)}  
&\leq \norm{\eta \nabla \times \BE_h}^2_{\BLT(\Omega)}  \\
\nonumber
& \leq  \operatorname{Re} a(\BE_h,\eta^2 \BE_h) + \norm{\kappa}_{L^\infty} \norm{\BE_h}^2_{\BLT(B_{(1+\varepsilon)R}\cap \Omega)}\\
\label{eq:caccioppoli-10a}
&\quad+ \frac{1}{2}\norm{\eta \nabla \times \BE_h}^2_{\BLT(B_R \cap \Omega)}+ 2 \norm{\nabla \eta \times \BE_h}^2_{\BLT(B_R \cap \Omega)} . 
\end{align}
Kicking  back the term $\frac{1}{2}\norm{\eta \nabla \times \BE_h}_{\BLT(B_R \cap \Omega)}^2$ to the left-hand side, we arrive at  
\begin{align}
\nonumber 
\norm{\nabla \times \BE_h}^2_{\BLT(B_R \cap \Omega)}  
&\leq \norm{\eta \nabla \times \BE_h}^2_{\BLT(\Omega)}  \\
\label{eq:caccioppoli-10}
& \leq 2 \operatorname{Re} a(\BE_h,\eta^2 \BE_h) + 2 (\norm{\kappa}_{L^\infty} + 2\norm{\nabla \eta}^2_{L^\infty}) \norm{\BE_h}^2_{\BLT(B_{(1+\varepsilon)R}\cap \Omega)}. 
\end{align}
Since 
$\norm{\kappa}_{L^\infty} + \norm{\nabla \eta}^2_{L^\infty} \lesssim (\varepsilon R)^{-2}$ with implied constant depending
on $\kappa$, we are left with estimating $\operatorname{Re} a(\eta \BE_h,\eta \BE_h)$. 

 {\bf Step 2:} 
 Using the orthogonality relation in the definition of the space $\mathcal{H}_{c,h}(B_{(1+\varepsilon)R}\cap \Omega)$, we get
 \begin{align}\label{proof of Cacciapoli. step1.3}
\operatorname{Re} a(\BE_h,\eta^2 \BE_h) &=  \operatorname{Re} a( \BE_h,   \eta^2 \BE_h- \Br_\Omega (\eta^2 \BE_h)) \nonumber  \\
&\lesssim 
\norm{\nabla \times \BE _h}_{\BLT (B_{(1+\varepsilon)R}\cap \Omega)}\norm{\nabla \times  \left( \eta^2 \BE_h- \Br_\Omega (\eta^2 \BE_h)\right) }_{\BLT (B_{(1+\varepsilon)R}\cap \Omega)} \nonumber\\
& \quad  +
\norm{\BE _h}_{\BLT (B_{(1+\varepsilon)R}\cap \Omega)}\norm{ \eta^2 \BE_h- \Br_\Omega (\eta^2 \BE_h) }_{\BLT (B_{(1+\varepsilon)R}\cap \Omega)}. 
 \end{align}
For each element $T \in \mathcal{T}_h$, Lemma~\ref{approximation properties of the local interpolation operator} yields
\begin{align}
\label{eq:caccioppoli-20}
\norm{\eta^2 \BE_h- \Br_\Omega (\eta^2 \BE_h)}^2_{\BLT (T) }  + 
\norm{\nabla \times  \left( \eta^2 \BE_h- \Br_\Omega (\eta^2 \BE_h)\right) }^2_{\BLT (T) } 
\lesssim h^2 \left(
|\eta^2 \BE_h|^2_{\BH^1(T)} + 
|\nabla \times (\eta^2 \BE_h)|^2_{\BH^1(T)} 
\right). 
\end{align}
To proceed further, we observe that $\BE_h|_T \in {\mathcal N}_0(T)$ has the form $\BE_h = {\mathbf a} + {\mathbf b} \times {\mathbf x}$ so that 
$\operatorname{curl} \BE_h|_T  = 2 {\mathbf b}$ and hence 
$\sum_{j= 1}^3 |\partial_{x_j}  \BE_h| \lesssim |\nabla \times \BE_h|$ pointwise on $T$ so that we get with an implied constant independent of the function
$\eta$ 
\begin{equation}
\label{eq:caccioppoli-30}
\sum_{j=1}^3 \norm{\eta \partial_{x_j} \BE_h}_{\BLT(T)} \lesssim \norm {\eta \nabla \times \BE_h}_{\BLT(T)}. 
\end{equation}
Using \eqref{eq:caccioppoli-30} we obtain 
\begin{align}
\label{eq:caccioppoli-40}
|\eta^2 \BE_h|_{\BH^1(T)} &\lesssim \frac{1}{\varepsilon R} \norm{\BE_h}_{\BLT(T)} + \|\eta \nabla \times \BE_h\|_{\BLT(T)}.  
\end{align}
Computing $\nabla \times (\eta^2 \BE_h) = \nabla \eta^2 \times \BE_h + \eta^2 \nabla \times \BE_h$, 
using the product rule and the fact that $\partial_{x_j} (\nabla \times \BE_h) = 0$ since $\nabla \times \BE_h$ is constant 
gives again in view of \eqref{eq:caccioppoli-30} and $\varepsilon R \lesssim 1$
\begin{align}
\label{eq:caccioppoli-50}
|\nabla \times ( \eta^2 \BE_h)|_{\BH^1(T)} &\lesssim \frac{1}{(\varepsilon R)^2} \norm{\BE_h}_{\BLT(T)} 
+ \frac{1}{\varepsilon R} \norm{\eta \nabla \times \BE_h}_{\BLT(T)}.
\end{align}
Summing the squares of \eqref{eq:caccioppoli-40}, \eqref{eq:caccioppoli-50} over all 
elements $T$ with $T \cap \operatorname{supp} \eta \ne \emptyset$, which is ensured 
if we sum over all $T$ with $T \subset B_{(1+\varepsilon)R} \cap \Omega$, and inserting the result 
in \eqref{eq:caccioppoli-20} yields 
\begin{align*}
  \operatorname{Re} a( \BE_h,   \eta^2 \BE_h- \Br_\Omega (\eta^2 \BE_h))  
 &\lesssim  
\left(\norm{\nabla \times \BE_h}_{\BLT(B_{(1+\varepsilon)R} \cap \Omega )}
+ \norm{\BE_h}_{\BLT(B_{(1+\varepsilon)R} \cap \Omega)}
\right) \times \\
& \qquad \frac{h}{\varepsilon R} \left( \frac{1}{\varepsilon R}\norm{\BE_h}_{\BLT(B_{(1+\varepsilon)R}\cap \Omega)} + \norm{\eta (\nabla \times \BE_h)}_{\BLT(B_{(1+\varepsilon)R}\cap \Omega)} \right).
\end{align*}
Using  Young's inequality, $h \lesssim 1$ and $0\leq \eta \leq 1$ as well as the definition of the norm $\vertiii{\cdot} _{c,h, R}$, we obtain 
\begin{align*}
  \operatorname{Re} a( \BE_h,   \eta^2 \BE_h- \Br_\Omega (\eta^2 \BE_h))  &\lesssim \frac{h^2}{(\varepsilon R)^2}  \norm{\nabla \times \BE_h}^2_{\BLT(B_{(1+\varepsilon)R} \cap \Omega )} +
\frac{1}{(\varepsilon R)^2} \norm{\BE_h}^2_{\BLT(B_{(1+\varepsilon)R}\cap \Omega)}  \\
& \quad+  
 \frac{h}{\varepsilon R} \norm{\nabla \times \BE_h}_{\BLT(B_{(1+\varepsilon)R} \cap \Omega )} \norm{\eta \nabla \times \BE_h}_{\BLT(B_{(1+\varepsilon)R}\cap \Omega)}
  \\& 
\lesssim \varepsilon^{-2} \vertiii{\BE_h}_{c,h,(1+\varepsilon)R}^2 + 
\varepsilon^{-1} \vertiii{\BE_h}_{c,h,(1+\varepsilon)R} \norm{\eta \nabla \times \BE_h}_{\BLT(B_{(1+\varepsilon)R}\cap \Omega)}. 
\end{align*}
Inserting this in \eqref{eq:caccioppoli-10} produces 
\begin{align*}
\norm{\nabla \times \BE_h}^2_{\BLT(B_R \cap \Omega)} & \leq 
\norm{\eta \nabla \times \BE_h}^2_{\BLT(\Omega)} \\
& \lesssim 
\varepsilon^{-2} \vertiii{\BE_h}_{c,h,(1+\varepsilon)R}^2 + 
\varepsilon^{-1} \vertiii{\BE_h}_{c,h,(1+\varepsilon)R} \norm{\eta \nabla \times \BE_h}_{\BLT(B_{(1+\varepsilon)R}\cap \Omega)}. 
\end{align*}
Using again Young's inequality to kick the term $\norm{\eta \nabla \times \BE_h}_{\BLT(B_{(1+\varepsilon)R}\cap \Omega)}$ of the right-hand side
back to the left-hand side produces the desired estimate. 
\qed
 \end{proof}

For functions in $\CH _{g,h} (B _{(1+\varepsilon)R}\cap \Omega)$, a discrete Caccioppoli-type estimate has already been established in \cite[Lem.~{2}]{faustmann2015mathcal}, which we state in the following for sake of completeness.

 \begin{lemma} [\protect{\cite[Lem.~{2}]{faustmann2015mathcal}}]
\label{th:Caccioppoli-irrotational part}
 	 	Let   $\varepsilon \in (0,1)$ and $R \in (0,2\operatorname*{diam}(\Omega))$ be  such that  $\frac{h}{R} < \frac{\varepsilon}{4}$. 
 	 Let 
 	$B_R$ and $B_{(1+\varepsilon)R}$ be  two concentric boxes and $p _h \in \mathcal{H}_{g,h}(B_{(1+\varepsilon)R}\cap \Omega)$.
 	Then, there exists a constant $C >0$ depending only on $\Omega$ 
 	and the $\gamma$-shape regularity of the quasi-uniform triangulation $\T_h$ such that 
 	\begin{align}
 	\nonumber
 	\left\| \nabla p _h \right\|_ {\BLT(B_{R}\cap \Omega)}
 	&\le 
 	C \frac{1+\varepsilon}{\varepsilon} \vertiii{p _h}_{g,h,(1+\varepsilon) R} . 
 	\end{align}
 \end{lemma}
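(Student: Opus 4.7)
The plan is to mirror the argument used for the Maxwell Caccioppoli inequality in Lemma~\ref{th:Caccioppoli-divergence free part}, with the scalar Laplacian playing the role of the curl-curl operator. First, I introduce a smooth cutoff $\eta \in C^\infty(\overline{\Omega})$ with $\eta \equiv 1$ on $B_R \cap \Omega$, $\operatorname{supp} \eta \subset B_{(1+\varepsilon/2)R}$, $0 \le \eta \le 1$, and $\|\nabla^j \eta\|_{L^\infty} \lesssim (\varepsilon R)^{-j}$ for $j \in \{0,1,2\}$. The assumption $4h < \varepsilon R$ ensures that a Scott--Zhang-type interpolant $I_h(\eta^2 p_h) \in S^{1,1}_0(\T_h)$ has support contained in $\overline{B_{(1+\varepsilon)R}\cap \Omega}$, hence is admissible as a test function in the definition of $\CH_{g,h}$.

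Second, I use the product rule $\nabla(\eta^2 p_h) = \eta^2 \nabla p_h + 2\eta p_h \nabla \eta$ to write
\begin{align*}
\|\eta \nabla p_h\|^2_{\BL^2(\Omega)}
= \langle \nabla p_h,\nabla(\eta^2 p_h)\rangle_{\BL^2(\Omega)}
- 2\langle \eta \nabla p_h, p_h \nabla \eta\rangle_{\BL^2(\Omega)},
\end{align*}
and invoke the discrete orthogonality defining $\CH_{g,h}$ to replace the first term by $\langle \nabla p_h,\nabla(\eta^2 p_h - I_h(\eta^2 p_h))\rangle$. The commutator term is handled by Cauchy--Schwarz using $|\nabla \eta|\lesssim (\varepsilon R)^{-1}$.

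Third, the interpolation error is estimated elementwise by $\|\nabla(\eta^2 p_h - I_h(\eta^2 p_h))\|_{\BL^2(T)} \lesssim h\,|\eta^2 p_h|_{H^2(\omega_T)}$. Since $p_h$ is piecewise affine, $D^2 p_h$ vanishes on each element, and the Leibniz rule gives the pointwise bound $|D^2(\eta^2 p_h)| \lesssim (\varepsilon R)^{-2}|p_h| + (\varepsilon R)^{-1}\eta|\nabla p_h|$ on each $T \subset B_{(1+\varepsilon)R}\cap \Omega$. Summing squares over such $T$ yields
\begin{align*}
|\langle \nabla p_h,\nabla(\eta^2 p_h - I_h(\eta^2 p_h))\rangle|
\lesssim \frac{h}{\varepsilon R}\|\nabla p_h\|_{\BL^2(B_{(1+\varepsilon)R}\cap\Omega)}\Bigl( \tfrac{1}{\varepsilon R}\|p_h\|_{L^2} + \|\eta\nabla p_h\|_{\BL^2}\Bigr).
\end{align*}

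Finally, two applications of Young's inequality absorb the $\|\eta\nabla p_h\|_{\BL^2}$-factors on the right back into the left-hand side, and the remaining terms assemble into $\bigl(\tfrac{1+\varepsilon}{\varepsilon}\bigr)^2 \vertiii{p_h}^2_{g,h,(1+\varepsilon)R}$ after collecting the $h/(\varepsilon R)$ and $1/(\varepsilon R)$ weights. The only genuine subtlety, precisely as in the Maxwell case, is ensuring that all boundary layer terms produced by differentiating $\eta$ carry the correct power of $h/R$ to match the weighted norm $\vertiii{\cdot}_{g,h,(1+\varepsilon)R}$; since the statement is explicitly cited from \cite[Lem.~2]{faustmann2015mathcal}, this verification is purely a matter of bookkeeping.
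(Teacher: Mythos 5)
The paper does not prove this lemma itself; it states it as a citation to \cite[Lem.~2]{faustmann2015mathcal}, so there is no in-paper proof to match against. Your reconstruction follows the same cutoff-and-absorb scheme used in that reference and mirrored in the paper's own proof of Lemma~\ref{th:Caccioppoli-divergence free part}: test the discrete orthogonality with an interpolant of $\eta^2 p_h$, exploit that second derivatives of the piecewise-affine $p_h$ vanish elementwise, estimate the commutator $2\langle \eta \nabla p_h, p_h \nabla\eta\rangle$, and close with Young. The identity, the Leibniz bookkeeping for $|D^2(\eta^2 p_h)|$, and the support check with $h/R < \varepsilon/4$ are all correct, and the weights do assemble into $\bigl(\tfrac{1+\varepsilon}{\varepsilon}\bigr)^2 \triplenorm{p_h}^2_{g,h,(1+\varepsilon)R}$. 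This is the right argument.

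The one genuine slip is the choice of quasi-interpolation operator. The function $\eta^2 p_h$ is continuous and piecewise $H^2$ but \emph{not} globally $H^2$: the normal derivative of $p_h$ jumps across element faces, so $|\eta^2 p_h|_{H^2(\omega_T)}=\infty$ whenever the patch $\omega_T$ contains more than one element. The standard Scott--Zhang error estimate $\|\nabla(u - I_h^{\rm SZ}u)\|_{L^2(T)} \lesssim h\,|u|_{H^2(\omega_T)}$ therefore does not apply to $\eta^2 p_h$ as written. You should instead use the nodal Lagrange interpolant, which is well-defined here since $\eta^2 p_h$ is continuous, preserves $S^{1,1}_0(\T_h)$ and zero boundary values, and satisfies the purely elementwise bound $\|\nabla(u - I_h u)\|_{L^2(T)} \lesssim h\,|u|_{H^2(T)}$; the right-hand side is finite on each $T$ and your Leibniz estimate plugs in directly. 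This parallels the Maxwell case, where the paper uses the elementwise N\'ed\'elec interpolant $\Br_h$ together with the elementwise estimate of Lemma~\ref{approximation properties of the local interpolation operator}, not a patch-based quasi-interpolant. With that substitution your proof is complete and matches the cited source.
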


\subsection{Low-dimensional approximation in $\mathcal{H}_{c,h}(B_R \cap \Omega)$. }
In this subsection, we apply  the Caccioppoli-type estimates from Lemmas~\ref{th:Caccioppoli-divergence free part} 
and  \ref{th:Caccioppoli-irrotational part} to find approximations of the Galerkin solutions from low-dimensional spaces. 
We will need a Poincar\'e inequality as given in \cite[(7.45)]{gilbarg-trudinger77a}: for open sets $D\subset \omega$ with $|D| > 0$ and $u \in H^1 (\omega)$, we have
\begin{align}
\label{eq:poincare-GT}
\norm{u- \frac{1}{\abs{D}} \int_{D}u \,dx}_{L ^2(\omega)}\lesssim {\abs{D}}^ {-2/3}(\diam(D))^3 \norm{\nabla u}_{L ^2(\omega)}. 
\end{align}
In the following, we consider  low-dimensional approximation of discrete harmonic functions in Lemma~\ref{H-Matrix ApproximationL-FMP15} that generalizes 
\cite[Lem.~4]{faustmann2015mathcal}. 
    \begin{lemma}
    	\label{H-Matrix ApproximationL-FMP15}

    	Let $ \varepsilon \in ( 0,1) $, $q \in (0,1) $, $R \in (0,2\operatorname*{diam}(\Omega))$, and $m \in \mathbb{N}$ satisfy 
    	\begin{align}
    	\label{eq:proofHmatrixlemma1}
    	 	\frac{h}{R} \leq \frac{ q  \varepsilon  }{8m \max \left\lbrace 1 , C   _{\rm app} \right\rbrace }, 
    	\end{align}
    	where  the constant $C _{\rm app}$ is given in \cite[Lem.~3, Lem.~4]{faustmann2015mathcal} and depends only on $\Omega$ 
        and the $\gamma$-shape regularity of the quasi-uniform triangulation $\T_h$. 
    	Let $B_R$, $B_{(1+ \varepsilon )R}$, $B_{(1+2\varepsilon)R}$ be concentric boxes. 
    	Then, there exists a subspace $W_{m} $ of   $ \mathcal{H}_{g,h}( B _{R} \cap \Omega)$ 
    	of dimension 
    	\begin{align*}
    	\dim W_{m} \leq C ^\prime  _{\dim}\left(\frac{1+  \varepsilon  ^ {-1}}{q} \right)^{3}  {m }^4
    	\end{align*}
    	with the following  approximation properties:
\begin{enumerate}[(i)]
\item  If $u _h \in \CH _{g,h}(B _{(1+\varepsilon)R}\cap \Omega)$ and $\overline{B_{(1+\varepsilon)R}} \cap \Omega^c = \emptyset$, then
     	\begin{align*}
   	 \min\limits_{ \widetilde{ u}  _{m} \in W _{m}}
	\vertiii{u _h - \widetilde{u} _{m}}_{g,h,R}
	&\leq C_{\rm app}^\prime  {q  }  ^{m} \norm{\nabla{u _h}}_{\BLT(B _{(1+\varepsilon) R}\cap \Omega)}. 
\end{align*}
\item  If $u _h \in \CH _{g,h}(B _{(1+2\varepsilon)R}\cap \Omega)$ and $\overline{B_{(1+\varepsilon)R}} \cap \Omega^c \ne \emptyset$, then
\begin{align*}
    	 \min\limits_{ \widetilde{ u}  _{m} \in W _{m}}
    	\vertiii{u _h - \widetilde{u} _{m}}_{g,h,R}
    	&\leq C_{\rm app} ^\prime {q  }  ^{m} \varepsilon^{-2} \norm{\nabla{u _h}}_{\BLT(B _{(1+2\varepsilon) R}\cap \Omega)}.  
    	\end{align*}
\end{enumerate}
    	Here, $ C ^\prime  _{\dim}$, $C_{\rm app}^\prime$ depend  only on 
    	$\Omega$  and the $\gamma$-shape regularity of the quasi-uniform triangulation $\T_h$.
    \end{lemma}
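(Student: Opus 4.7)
The plan is to adapt the iterative argument underlying [Lem.~4]{faustmann2015mathcal}, which combines a one-step low-dimensional approximation with $m$-fold iteration enabled by the discrete Caccioppoli estimate of Lemma~\ref{th:Caccioppoli-irrotational part}. The one-step ingredient, essentially [Lem.~3]{faustmann2015mathcal}, produces, for concentric boxes $B_r\subset B_{(1+\delta)r}$ with $h/r<\delta/4$, a subspace $V_{r,\delta}\subset\CH_{g,h}(B_r\cap\Omega)$ of dimension $\lesssim(1+\delta^{-1})^3$ such that every $u_h\in\CH_{g,h}(B_{(1+\delta)r}\cap\Omega)$ admits an approximant $\widetilde u\in V_{r,\delta}$ with
\begin{equation*}
\vertiii{u_h-\widetilde u}_{g,h,r}\le C_{\rm app}\,\frac{h}{\delta r}\,\vertiii{u_h}_{g,h,(1+\delta)r}.
\end{equation*}
This is produced by piecewise-constant $L^2$-projection on a clustering of mesh nodes of characteristic size $\sim\delta r$, controlled on each cluster by Poincaré and Caccioppoli (Lemma~\ref{th:Caccioppoli-irrotational part}).

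To iterate, I introduce $m$ nested concentric boxes $B_R=B_{R_0}\subset B_{R_1}\subset\cdots\subset B_{R_m}$, with uniform layer thickness chosen so that $B_{R_m}=B_{(1+\varepsilon)R}$ in case (i), respectively $B_{R_m}=B_{(1+2\varepsilon)R}$ in case (ii); in either case the per-layer relative thickness is $\delta\sim\varepsilon/m$. Applying the one-step estimate on each layer and using the hypothesis \eqref{eq:proofHmatrixlemma1} on $h/R$, one has both $h/R_j<\delta/4$ and $C_{\rm app}\,h/(\delta R_j)\le q$ on every layer, so that the $m$-fold composition produces a total contraction of $q^m$. The approximant lies in the sum of the $m$ subspaces $V_{R_j,\delta}$, of total dimension $\lesssim m(1+\delta^{-1})^3\sim m^4(1+\varepsilon^{-1})^3/q^3$, which is the asserted bound on $\dim W_m$.

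The distinction between (i) and (ii) lies entirely in the boundary geometry. In case (i), $\overline{B_{(1+\varepsilon)R}}\subset\Omega$, the layers never meet $\partial\Omega$, and the argument is a direct transcription of [Lem.~4]{faustmann2015mathcal}, giving the single factor $\varepsilon^{-1}$. In case (ii), the enlarged box meets $\Omega^c$; because $u_h$ is the restriction of an element of $S_0^{1,1}(\T_h)$ vanishing on $\partial\Omega$, extension by zero keeps it in $\CH_{g,h}$ across the boundary, so the nested-box iteration still runs. However, since Caccioppoli (Lemma~\ref{th:Caccioppoli-irrotational part}) only yields clean gradient estimates on boxes whose enlargement has been crossed by the cut-off function, we widen the outermost argument from $(1+\varepsilon)R$ to $(1+2\varepsilon)R$ and use the extra annulus as a buffer; this absorbs two more $\varepsilon^{-1}$ factors, producing the stated $\varepsilon^{-3}$ in place of $\varepsilon^{-1}$. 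The main delicate point I expect in writing this out carefully is verifying that the one-step clustering projection can be kept inside $\CH_{g,h}(B_R\cap\Omega)$ (and not merely in $H^1$) even when the boxes straddle $\partial\Omega$; this is arranged by performing the nodal clustering globally on $S_0^{1,1}(\T_h)$ and restricting, so that the boundary condition is preserved automatically.
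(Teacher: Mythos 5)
Your outline correctly reconstructs the iteration mechanism \emph{inside} \cite[Lem.~4]{faustmann2015mathcal} (nested boxes of relative thickness $\sim \varepsilon/m$, one-step projection, $m$-fold composition giving $q^m$ and dimension $\sim m(1+\varepsilon^{-1})^3/q^3 \cdot m^3 = m^4(1+\varepsilon^{-1})^3/q^3$). The paper does not reprove that; it invokes \cite[Lem.~4]{faustmann2015mathcal} as a black box, obtaining
$\min_{\widetilde u_m \in W_m}\vertiii{u_h - \widetilde u_m}_{g,h,R}\le q^m \vertiii{u_h}_{g,h,(1+\varepsilon)R}$.
That is a harmless stylistic difference, but it is not where the content of the present lemma lies.

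The genuine gap is that your argument terminates with the \emph{triple norm} $\vertiii{u_h}_{g,h,(1+\varepsilon)R}$ on the right-hand side, which contains the term $\frac{1}{(1+\varepsilon)R}\norm{u_h}_{L^2}$; the statement requires a bound by $\norm{\nabla u_h}_{\BL^2}$ alone. These are not equivalent: in the interior case $u_h$ could be a nonzero constant, making the triple norm arbitrarily large while $\nabla u_h \equiv 0$. Converting the triple norm to a pure gradient norm is the actual new content of this lemma, and your sketch omits the mechanism. In the interior case (your (i)) the paper enlarges $W_m$ by the constant functions (which lie in $\CH_{g,h}(B_R\cap\Omega)$), so that one may subtract the mean and then apply a Poincar\'e--Wirtinger inequality, which yields exactly one factor $\varepsilon^{-1}$. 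In the boundary case (your (ii)) one extends $u_h$ by zero to $\Omega^c$, observes that the Lipschitz property of $\Omega$ gives $\abs{B_{(1+2\varepsilon)R}\cap\Omega^c}\gtrsim(\varepsilon R)^3$, and applies the Poincar\'e inequality of \cite[(7.45)]{gilbarg-trudinger77a} on $B_{(1+2\varepsilon)R}$; the ratio $\abs{B_{(1+2\varepsilon)R}}/\abs{B_{(1+2\varepsilon)R}\cap\Omega^c}^{2/3}$ is precisely what produces $\varepsilon^{-3}$. Your claim that the buffer annulus ``absorbs two more $\varepsilon^{-1}$ factors'' is not a correct account of where that exponent comes from, and ``direct transcription of \cite[Lem.~4]{faustmann2015mathcal} gives the single factor $\varepsilon^{-1}$'' in case (i) is not justified as stated. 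Without the Poincar\'e step the proof does not close.
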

\begin{proof}
We start with the case of boxes not entirely contained in $\Omega$. 
\newline 
\textbf{Case 1:} Let $\overline{B_{(1+\varepsilon)R}}  \cap \Omega ^c \neq \emptyset$. 
For the Lipschitz domain $\Omega$, \cite[Chap.~{VI}, Sec.~{3}, Thm.~{5'}]{stein1970singular} asserts the existence of a bounded linear extension operator 
$\CE_{\Omega^c}: H ^1 (\Omega ^c) \rightarrow H^1 (\R ^3)$ such that $\CE_{\Omega^c} v | _ {\Omega ^c} = v$ 
for each $v \in H ^1 (\Omega ^c) $. The fact that $\Omega^c$ is Lipschitz 
(see \cite[Thm.~{2}]{hajlasz2008sobolev} for details) 
implies the existence of a 
constant $c >0$ depending only on $\Omega$ such that for all $x \in \Omega^c$ and all $r \in (0,1)$ 
we have $|B_{r}(x) \cap \Omega^c| \ge c r^3$, where $B_r(x)$ denotes the ball of radius $r$ centered at $x$. 
Selecting an $x \in B_{(1+\varepsilon)R} \cap \Omega^c$ and noting that 
$B_{\varepsilon R/2}(x) \subset B_{(1+2\varepsilon)R}$, we conclude 
$$
|B_{(1+2\varepsilon)R} \cap \Omega^c| \ge 
|B_{\varepsilon R/2}(x) \cap \Omega^c| \ge c (\varepsilon R/2)^{3} . 
$$
Due to \eqref{eq:proofHmatrixlemma1}, \cite[Lem.~4]{faustmann2015mathcal} provides a subspace $ W_{m} $ of   $ \mathcal{H}_{g,h}( B _{R} \cap \Omega)$ such that
	\begin{align}\label{FMP15}
 	 \min\limits_{  \widetilde  { u}  _{m} \in  W _{m}}
 	\vertiii{ u _h - \widetilde  {u} _{m}}_{g,h,R}
 	&\leq  {q  }  ^{m}	\vertiii{{  u _h}}_{g,h,(1+\varepsilon)R},  \\
 \label{72a}
\dim  W_{m} & \leq  C   _{\dim}\left(\frac{1+  \varepsilon  ^ {-1}}{q} \right)^{3}  {m }^4,
\end{align}
where  $  C _{\dim}$ depends  only on $\Omega$  and the $\gamma$-shape regularity of the quasi-uniform triangulation $\T_h$.
We denote by $\widehat u _ h $ the extension by zero of $u_h$ to $\Omega ^c$.
It follows from the Poincar\'e inequality 
(\ref{eq:poincare-GT}) 
and   $\abs{B _ {(1+2\varepsilon)R} \cap \Omega ^c } \gtrsim (\varepsilon R)^3$ that 
\begin{align}
	\nonumber
	\frac{1}{ R}\norm{ u _h}_{L ^2(B _{(1+\varepsilon) R} \cap \Omega)}
	&  \le \frac{1}{ R}\norm{ u _h}_{L ^2(B _{(1+2\varepsilon) R} \cap \Omega)}
	=
	\frac{1}{ R}\norm{\widehat u _h}_{L ^2(B _{(1+2\varepsilon) R})}
	\\ &
	\nonumber  \lesssim \frac{\abs{B _{(1+2\varepsilon) R}}}{ R \abs{B _ {(1+2\varepsilon)R} \cap \Omega ^c }^ {2/3} } \norm{ \nabla \widehat u _h}_{\BL  ^2(B _{(1+2\varepsilon) R})} \\
	&\lesssim \frac{(1+2\varepsilon )^3 R^3}{\varepsilon ^ 2R^3} \norm{ \nabla \widehat u _h}_{\BLT(B _{(1+2\varepsilon) R})}
\lesssim \varepsilon ^ {-2}\norm{ \nabla \widehat u _h}_{\BL  ^2(B _{(1+2\varepsilon) R})}.
\label{Poincar\'e inequality}
\end{align}
Combining \eqref{Poincar\'e inequality} and \eqref{FMP15} leads to 
\begin{align}\label{FMP15a}
	& \min\limits_{   \widetilde{ u}  _{m}  \in  W _{m}}
	\vertiii{ u _h -  \widetilde{ u}  _{m} }_{g,h,R} \lesssim
	\varepsilon ^ {-2} q ^m\norm{ \nabla  u _h}_{\BLT(B _{(1+2\varepsilon) R} \cap \Omega)}.  
\end{align}
\textbf{Case 2:}
Let $\overline{B_{(1+\varepsilon)R}} \cap \Omega^c = \emptyset$.  We note that constant functions are in $ \mathcal{H}_{g,h}(B_R \cap \Omega)$. Hence, 
by \cite[Lem.~4]{faustmann2015mathcal} there is a subspace $ W_{m} \subset  \mathcal{H}_{g,h}( B _{R} \cap \Omega)$ such that $1 \in W_m$ and 
\begin{align}\label{FMP15b}
& \min\limits_{  \widetilde{u} _{m} \in  W _{m}}
\vertiii{ u _h - \widetilde{u} _{m}}_{g,h,R}
=\min\limits_{ \widetilde{u} _{m} \in  W _{m},\, c \in \R}
\vertiii{ u _h -  \widetilde{u} _{m}+c}_{g,h,R} \leq  {q  }  ^{m}	\min _ {c \in \R}\vertiii{{  u _h-c}}_{g,h,(1+\varepsilon)R} 
\end{align}
	with dimension 
\begin{align*}
\dim W_{m} \leq C   _{\dim}\left(\frac{1+  \varepsilon  ^ {-1}}{q} \right)^{3}  {m }^4 + 1 
\lesssim \left(\frac{1+  \varepsilon  ^ {-1}}{q} \right)^{3}  {m }^4.  
\end{align*}
A standard Poincar\'e inequality (i.e.,  (\ref{eq:poincare-GT}) with $D = B_{(1+\varepsilon)R}$) implies
\begin{align}\label{FMP15c}
\nonumber \min _ {c \in \R} \vertiii{{  u _h-c}}_{g,h,(1+\varepsilon)R} &\le \vertiii{{  u _h-\frac{1}{\abs{{B _{(1+\varepsilon)R}}}} \int_{B _{(1+\varepsilon)R}}u _h}}_{g,h,(1+\varepsilon)R} \\
\nonumber & \lesssim \frac{\abs{B _{(1+\varepsilon) R}}}{ R \abs{B _ {(1+\varepsilon)R} }^ {2/3} }\norm{ \nabla  u _h}_{\BLT(B _{(1+\varepsilon) R}\cap \Omega)}+ \frac{h}{(1+\varepsilon)R} \norm{ \nabla u _h}_{\BLT(B _{(1+\varepsilon) R}\cap \Omega)}\\
& \lesssim  \norm{ \nabla  u _h}_{\BLT(B _{(1+\varepsilon) R}\cap \Omega)}.
\end{align}
Combining \eqref{FMP15c} and \eqref{FMP15b} completes the proof.
\qed
\end{proof}
\begin{remark}
The factor $\varepsilon^{-2}$ instead of $\varepsilon^{-0}$ for boxes $B_R$ near the boundary is a consequence of not assuming a relation between
the orientation of the boxes and the boundary. Aligning boxes with the boundary allows one to better exploit boundary conditions
and improve the factor $\varepsilon^{-2}$. 
\eremk
\end{remark}
In the following, we will need a simplified version of Lemma~\ref{H-Matrix ApproximationL-FMP15}: 
\begin{corollary}
\label{cor:H-Matrix ApproximationL-FMP15} 
Let $R \in (0,2\operatorname*{diam}(\Omega))$, $\varepsilon \in (0,1)$, $q \in (0,1)$. There are constants
$C^{\prime\prime}_{\rm dim}$ and $C^{\prime\prime}_{\rm app}$ depending only on $\Omega$ and the $\gamma$-shape regularity of the quasiuniform triangulation $\T_h$
such that, for any concentric boxes $B_R$, $B_{(1+2\varepsilon)R}$ and any $m \in {\mathbb N}$, there exists a subspace $W_m \subset 
\CH_{g,h}(B_R\cap\Omega)$ of dimension 
\begin{align*}
\operatorname{dim} W_m \leq C^{\prime\prime}_{\rm dim} (\varepsilon q)^{-3} m^4
\end{align*} 
such that for any $u_h \in \CH_{g,h}(B_{(1+2\varepsilon)R}\cap\Omega)$ there holds 
\begin{align}
\label{eq:cor:H-Matrix ApproximationL-FMP15-10}
\min_{\widetilde u_m \in W_m} \vertiii{u_h - \widetilde u_m}_{g,h,R} \leq C^{\prime\prime}_{\rm app} q^m \varepsilon^{-2} \norm{\nabla u_h}_{\BLT(B_{(1+2\varepsilon)R}\cap\Omega)}. 
\end{align}
\end{corollary}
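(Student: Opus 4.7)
The plan is a short deduction from Lemma~\ref{H-Matrix ApproximationL-FMP15}: I want to collapse its two cases into a single uniform bound and remove the explicit smallness assumption \eqref{eq:proofHmatrixlemma1} on $h/R$. Two observations do essentially all the work. First, since $\varepsilon\in(0,1)$ one has $\varepsilon^{-1}\le\varepsilon^{-3}$, and since $B_{(1+\varepsilon)R}\subset B_{(1+2\varepsilon)R}$ the $\BL^2$-norm on the smaller box is dominated by that on the larger one; hence any case~(i) estimate from Lemma~\ref{H-Matrix ApproximationL-FMP15} is weaker than the desired corollary estimate. Second, if $u_h\in\CH_{g,h}(B_{(1+2\varepsilon)R}\cap\Omega)$, then its restriction to $B_{(1+\varepsilon)R}\cap\Omega$ lies in $\CH_{g,h}(B_{(1+\varepsilon)R}\cap\Omega)$, because every admissible test function supported in $\overline{B_{(1+\varepsilon)R}\cap\Omega}$ is also admissible for $\CH_{g,h}(B_{(1+2\varepsilon)R}\cap\Omega)$, so the defining orthogonality descends.

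With these two observations in hand, and supposing first that \eqref{eq:proofHmatrixlemma1} is satisfied for the given $\varepsilon, q, m$, I would simply invoke Lemma~\ref{H-Matrix ApproximationL-FMP15}: case~(ii) yields the claimed bound verbatim, while case~(i), applied to the restriction of $u_h$, yields a strictly stronger estimate, which I would then weaken by the two observations above to the form \eqref{eq:cor:H-Matrix ApproximationL-FMP15-10}. The dimension count of the space $W_m$ produced by Lemma~\ref{H-Matrix ApproximationL-FMP15} already has the shape $C(\varepsilon q)^{-3}m^4$ (possibly plus a bounded additive constant coming from the single extra constant function added in the boundary-free case), so it fits under the uniform bound $C''_{\rm dim}(\varepsilon q)^{-3}m^4$ after absorbing constants.

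If instead \eqref{eq:proofHmatrixlemma1} fails, then $R/h\lesssim m/(\varepsilon q)$, and by $\gamma$-shape regularity of $\T_h$ the full space $\CH_{g,h}(B_R\cap\Omega)$ has dimension at most a constant times $(R/h)^3\lesssim m^3(\varepsilon q)^{-3}\le m^4(\varepsilon q)^{-3}$. In this regime I would take $W_m:=\CH_{g,h}(B_R\cap\Omega)$, which gives zero approximation error while respecting the dimension bound. I do not anticipate a genuine obstacle; the only slightly delicate point is the bookkeeping required to fold the constants from Lemma~\ref{H-Matrix ApproximationL-FMP15}, from the trivial large-$h$ case, and from the enlargement of the domain in case~(i), into single constants $C''_{\rm app}$ and $C''_{\rm dim}$ depending only on $\Omega$ and the shape-regularity of $\T_h$.
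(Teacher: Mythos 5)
Your proposal matches the paper's proof: in the paper, the regime where \eqref{eq:proofHmatrixlemma1} holds is dispatched by appealing directly to Lemma~\ref{H-Matrix ApproximationL-FMP15} (your observations about $\varepsilon^{-1}\le\varepsilon^{-3}$, box monotonicity, and restriction of discrete harmonic functions being exactly what makes case~(i) weaker than the corollary's claim), and the complementary regime $h/R > q\varepsilon/(8m\max\{1,C_{\rm app}\})$ is handled verbatim by taking $W_m=\CH_{g,h}(B_R\cap\Omega)$ and using quasi-uniformity to bound $\dim\CH_{g,h}(B_R\cap\Omega)\lesssim (R/h)^3\lesssim (\varepsilon q)^{-3}m^3\le(\varepsilon q)^{-3}m^4$. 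Your write-up is a slightly more explicit version of the same argument.
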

\begin{proof}
The case that the parameters satisfy (\ref{eq:proofHmatrixlemma1}) is covered by Lemma~\ref{H-Matrix ApproximationL-FMP15}. For the converse
case $h/R > q \varepsilon/(8 m \max\{1,C_{\rm app}\})$, we take $W_m:= \CH_{g,h}(B_R \cap \Omega)$ so that the minimum 
in (\ref{eq:cor:H-Matrix ApproximationL-FMP15-10}) is zero and observe in view of the quasi-uniformity of $\T_h$
\begin{align*}
\operatorname{dim} \CH_{g,h}(B_R \cap \Omega) \lesssim \left(\frac{R}{h}\right)^3 
\lesssim \left(\frac{m}{\varepsilon q}\right)^3 =  (\varepsilon q)^{-3} m^3 \leq (\varepsilon q)^{-3} m^4,
\end{align*}
which finishes the proof.
\qed
\end{proof}
If $\BE_h$ is locally discrete divergence-free, then the function $\nabla (p + \varphi_{\Bz})$ in the decomposition 
$\BE_h = \Bz  - \nabla \varphi_{\Bz} + \nabla (p+ \varphi_{\Bz})$ given by Definition~\ref{def:local-helmholtz} is also 
locally discrete divergence-free since $\Bz - \nabla \varphi_{\Bz}$ is divergence-free. 
The following lemma shows that $\Pi^\nabla_{\widetilde B_{(1+2\varepsilon)R}} \nabla (p + \varphi_{\Bz})$ is discrete divergence-free as well: 
     \begin{lemma}\label{H-Matrix ApproximationL4a}
    	Let    $\varepsilon \in (0,1)$, $R \in (0,2\operatorname*{diam}(\Omega))$, and let $B_ {(1+j\varepsilon)R}$, $j\in\{0,1,2\}$, be  concentric boxes.
    Introduce 	$\T _h(B_{(1+2\varepsilon)R} \cap \Omega)$ and $\widetilde B _{(1+2\varepsilon)R}$ according to Assumption~\ref{def:mesh-conforming}. 
 Let 
    	$\eta \in C ^\infty (\overline{\Omega}) $
    	 be a  cut-off function with  $\eta \equiv 1$ on $\widetilde B_{(1+2\varepsilon)R}$. Let $\BE_h$ be such that 
$\eta \BE_h \in \BH_0(\operatorname{curl},\Omega)$ and 
$\BE _h \in \mathcal{H} _{c,h}(B_ {(1+2\varepsilon)R}\cap \Omega)$. 
Decompose    $\eta \BE _h \in \BH _0(\operatorname*{curl},\Omega)$  as
$\eta \BE _h =  \Bz + \nabla p$
with $ \Bz \in \BH _0  ^1 (\Omega)$ and $p \in H_0^1 (\Omega)$  according to Lemma \ref{Helmholtz decomposition}.
Let   the mapping  $\varphi _ {\Bz} : \BH _0^1(\Omega) \rightarrow H^1 _0 (\Omega)$ be  defined according to  \eqref{eq:varphi_z} 
taking $\widetilde{ \eta} \equiv \eta $ there. 
    	 Then, 
    $\Pi^\nabla_{\widetilde{B}_{(1+2\varepsilon) R}} \nabla (p + \varphi_{\Bz})$ is discrete divergence-free 
    	on $\widetilde{B}_{(1+2\varepsilon) R}$, i.e., 
    	\begin{equation}
    	\label{eq:ph+varphi_z-disc-harmonic}
    	\langle \Pi^\nabla_{\widetilde{B}_{(1+2\varepsilon) R}} \nabla (p + \varphi_{\Bz}), \nabla v_h\rangle_{\BLT(\widetilde{B}_{(1+2\varepsilon) R})}
    	=  0 \qquad \forall v_h \in S^{1,1}(\T_h,\widetilde{B}_{(1+2\varepsilon) R}), 
\quad \operatorname{supp} v_h \subset \overline {\widetilde B_{(1+2\varepsilon) R}}. 
    	\end{equation}
    \end{lemma}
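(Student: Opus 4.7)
The strategy is to exploit that $\BE_h$ is discrete $\boldsymbol{\mathcal L}$-harmonic on $B_{(1+2\varepsilon)R}\cap\Omega$ and to test $a(\BE_h,\cdot)$ against the gradient of a scalar function, using the key algebraic cancellation $\nabla\times\nabla = 0$. First I would unfold the defining orthogonality \eqref{eq:local-div} of $\Pi^\nabla_{\widetilde B_{(1+2\varepsilon)R}}$, which reduces the claim to showing
\[
\langle \nabla(p+\varphi_{\Bz}),\nabla v_h\rangle_{\BL^2(\widetilde B_{(1+2\varepsilon)R})} = 0
\]
for every admissible $v_h$. Extending such a $v_h$ by zero yields $\widetilde v_h \in S^{1,1}_0(\T_h)$ and a discrete curl-free test function $\nabla\widetilde v_h \in \BX_{h,0}(\T_h,\Omega)$ with support in $\overline{\widetilde B_{(1+2\varepsilon)R}}$.

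The next step is to eliminate the scalar potentials. From $\eta\BE_h = \Bz + \nabla p$ I read off $\nabla p = \eta\BE_h - \Bz$. The defining relation \eqref{eq:varphi_z} for $\varphi_{\Bz}$ (applied with $\widetilde\eta \equiv \eta$) tested against $\widetilde v_h$ gives $\langle\nabla\varphi_{\Bz},\nabla\widetilde v_h\rangle_{L^2(\Omega)} = \langle\eta\Bz,\nabla\widetilde v_h\rangle_{L^2(\Omega)}$, and since $\eta \equiv 1$ on $\widetilde B_{(1+2\varepsilon)R} \supseteq \operatorname{supp}\nabla\widetilde v_h$ this equals $\langle\Bz,\nabla\widetilde v_h\rangle_{L^2(\Omega)}$. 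Adding everything,
\[
\langle\nabla(p+\varphi_{\Bz}),\nabla\widetilde v_h\rangle_{L^2(\Omega)}
= \langle\eta\BE_h - \Bz + \nabla\varphi_{\Bz},\nabla\widetilde v_h\rangle_{L^2(\Omega)}
= \langle\BE_h,\nabla\widetilde v_h\rangle_{L^2(\Omega)},
\]
the $\Bz$-terms cancelling and $\eta$ dropping out once more on the support of $\nabla\widetilde v_h$.

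Finally, I would invoke the discrete $\boldsymbol{\mathcal L}$-harmonicity: using $\nabla\times\nabla\widetilde v_h = 0$, the bilinear form collapses to $a(\BE_h,\nabla\widetilde v_h) = -\kappa\langle\BE_h,\nabla\widetilde v_h\rangle_{\BL^2(\Omega)}$, which vanishes by the defining orthogonality of $\CH_{c,h}(B_{(1+2\varepsilon)R}\cap\Omega)$. Since $\kappa \ne 0$ (the chosen $\kappa$ is assumed not to be an eigenvalue of $\nabla\times\nabla\times$, whose kernel contains $\nabla H^1_0(\Omega)$, so in particular $\kappa\ne 0$), this forces $\langle\BE_h,\nabla\widetilde v_h\rangle = 0$ and closes the chain. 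The only subtlety I would have to handle is support bookkeeping, namely checking that $\nabla\widetilde v_h$ is a legitimate test function for the discrete harmonic orthogonality: its support is a union of whole elements of $\T_h(B_{(1+2\varepsilon)R}\cap\Omega)$, which matches the natural convention implicit in the definition of $\CH_{c,h}$.
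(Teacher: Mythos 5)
Your proposal is correct and follows essentially the same route as the paper's own proof: test $a(\BE_h,\cdot)$ against $\nabla v_h$, use $\nabla\times\nabla v_h = 0$ together with $\kappa\ne 0$ to reduce to $\langle\BE_h,\nabla v_h\rangle = 0$, then use $\eta\equiv 1$ on the support, the regular decomposition $\eta\BE_h = \Bz+\nabla p$, the $\varphi_{\Bz}$-orthogonality, and the $\Pi^\nabla$-orthogonality. The only cosmetic differences are that you argue in reverse (reducing the goal) rather than forward from $0=a(\BE_h,\nabla v_h)$, you re-derive the cancellation $\langle\Bz-\nabla\varphi_{\Bz},\nabla\widetilde v_h\rangle=0$ directly from the definition \eqref{eq:varphi_z} instead of citing Lemma~\ref{lemma:varphi_z}(\ref{item:lemma:varphi_z-ii}), and you explicitly flag the roles of $\kappa\ne 0$ and of the support bookkeeping (with $\operatorname{supp}\nabla\widetilde v_h\subset\overline{\widetilde B_{(1+2\varepsilon)R}}$ rather than $\overline{B_{(1+2\varepsilon)R}\cap\Omega}$, the paper tacitly makes the same identification).
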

\begin{proof}
We use $\BE _h \in \mathcal{H} _{c,h}(B_ {(1+2\varepsilon)R} \cap \Omega)$ and \eqref{eq:local-div} so that, 
for $v_h \in S^{1,1}(\T_h,\widetilde{B}_{ (1+2\varepsilon) R})$ with $\operatorname{supp} v_h \subset \overline{\widetilde{B}_{(1+2\varepsilon) R}}$, we have 
\begin{align*}
0 & = a (\BE_h,\nabla v_h)=
	\skp{\nabla \times \BE_h, \nabla \times \nabla v_h}_{\BLT(\widetilde{B}_{(1+2\varepsilon) R})}   -\kappa   \skp{\BE_h, \nabla v_h}_{\BLT(\widetilde{B}_{(1+2\varepsilon) R})}  
	\\&=-\kappa \langle\BE_h,\nabla v_h\rangle_{\BLT(\widetilde{B}_{(1+2\varepsilon) R})}  =-\kappa \langle\eta\BE_h,\nabla v_h\rangle_{\BLT(\widetilde{B}_{(1+2\varepsilon) R})}
	=- \kappa \langle\Bz + \nabla p,\nabla v_h\rangle_{\BLT(\widetilde{B}_{(1+2\varepsilon)R})}  
	\\&=-\kappa  \langle\Bz -\nabla \varphi_{\Bz} + \nabla \varphi_{\Bz} + \nabla p,\nabla v_h\rangle_{\BLT(\widetilde{B}_{(1+2\varepsilon) R})}   \\
	&=-\kappa  \langle(\Bz -\nabla \varphi_{\Bz}) + \Pi^\nabla_{\widetilde{B}_{(1+2\varepsilon)R}} (\nabla \varphi_{\Bz} + \nabla p),\nabla v_h\rangle_{\BLT(\widetilde{B}_{ (1+2\varepsilon) R})}    \\
	& 
	\stackrel{\text{Lem.~\ref{lemma:varphi_z}}}{=} -\kappa \langle \Pi^\nabla_{\widetilde{B}_{ (1+2\varepsilon)R}} (\nabla \varphi_{\Bz} + \nabla p),\nabla v_h\rangle_{\BLT(\widetilde{B}_{(1+2\varepsilon) R})},  
\end{align*}
which finishes the proof.
\qed
\end{proof}
We will make use of the orthogonal projection
\begin{equation}
\label{eq:Pih}
\boldsymbol \Pi _{B_R}  \colon (\BH(\operatorname*{curl},B_R \cap \Omega) , \vertiii{\cdot}_{c,h,R})\rightarrow (\mathcal{H} _{c,h}(B_R \cap \Omega), \vertiii{\cdot}_{c,h,R}),
\end{equation}
where orthogonality is defined in terms of the inner product associated with the weighted norm $\vertiii{\cdot}_{c,h,R}$.
   \begin{lemma} [single-step approximation]
   	\label{H-Matrix ApproximationL4}
   	Let    $\varepsilon \in (0,1)$, $R>0$ be such that 
   	$(1+4\varepsilon)R \in (0,R _ {\rm max}]$, and $q \in (0,1)$. 
        Let $B_ {(1+j\varepsilon)R}$, $j=0,\ldots, 4$, be  concentric boxes.  
Then, there exists a family of linear spaces $\BV_{H,m} \subset \mathcal{H}_{c,h}(B_{R} \cap \Omega)$ (parametrized by $H  > 0$, $m \in {\mathbb N}$) 
with the following approximation properties: For each $\BE _h \in \mathcal{H} _{c,h}(B_ {(1+4\varepsilon)R} \cap \Omega)$ there is a 
   	   $\BE _{1,h}\in \BV_{H,m} \subset  \mathcal{H} _{c,h}(B_ {R} \cap \Omega) $ with 
   \begin{enumerate}[(i)]
   	\item
\label{item:H-Matrix ApproximationL4-i}
  $(\BE _h - \BE _ {1,h} )|_ {B_ R \cap \Omega} \in \mathcal{H} _ {c,h}(B_R \cap \Omega), $
  \item
\label{item:H-Matrix ApproximationL4-ii}
 $\displaystyle \vertiii{\BE _h-\BE _{1,h}} _ {c,h,R}  
 \leq  C^{\prime\prime}_{\rm app} 
 \left(\frac{H}{R}\varepsilon^{-1} +q^m \varepsilon^{-3}\right)  \vertiii{\BE_h}_{c,h,(1+4\varepsilon)R},
 $
 \item
\label{item:H-Matrix ApproximationL4-iii}
$  \displaystyle \dim \BV_{H,m} \le
  C_{\rm dim}^{\prime\prime}\left[ \Bigl( \frac{R}{H}\Bigr)  ^{3} + \bigl(\varepsilon q \bigr)^{-3}{m} ^4\right]$,\\
\end{enumerate}
   	   where   
   	the constants $C_{\rm app}^{\prime\prime}$ and  $C_ {\rm dim}^{\prime\prime}$ depend only on $\kappa$, $\Omega$, and the $\gamma$-shape regularity of the quasi-uniform
triangulation $\T_h$. 
Furthermore, 
\begin{enumerate}[(i)]
\setcounter{enumi}{3}
	\item
\label{item:H-Matrix ApproximationL4-iv}
 if $h \ge H$ or $h/R \ge \varepsilon/4$, one may actually take $\BV_{H,m} = \CH_{c,h}(B_R\cap\Omega)$
and $\BE_{1,h}$ may be taken as $\BE_{1,h} = \BE_h|_{B_R\cap\Omega}$. 
\end{enumerate}
   \end{lemma}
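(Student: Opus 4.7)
The plan is to use the local discrete Helmholtz-type decomposition of Definition~\ref{def:local-helmholtz} to split $\BE_h$ into a discrete-gradient piece and an almost-$\BH^1$ piece, approximate each piece separately, sum them, and then $\vertiii{\cdot}_{c,h,R}$-project the result onto $\CH_{c,h}(B_R\cap\Omega)$ via the orthogonal projection $\BPi_{B_R}$ from \eqref{eq:Pih}, which is how the discrete-$\boldsymbol{\mathcal L}$-harmonic structure required by (\ref{item:H-Matrix ApproximationL4-i}) is enforced. The degenerate case (\ref{item:H-Matrix ApproximationL4-iv}) is immediate: if $h \ge H$ or $h/R \ge \varepsilon/4$ I take $\BV_{H,m} := \CH_{c,h}(B_R\cap\Omega)$ and $\BE_{1,h} := \BE_h|_{B_R\cap\Omega}$, so that (\ref{item:H-Matrix ApproximationL4-i})--(\ref{item:H-Matrix ApproximationL4-ii}) are trivial and quasi-uniformity gives $\dim \BV_{H,m} \lesssim (R/h)^3 \lesssim (R/H)^3$, yielding (\ref{item:H-Matrix ApproximationL4-iii}).

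For the main case $h < H$ and $h/R < \varepsilon/4$, fix a smooth cutoff $\eta$ with $0\le\eta\le 1$, $\eta\equiv 1$ on $\widetilde{B}_{(1+3\varepsilon)R}$, $\operatorname{supp}\eta\subset B_{(1+4\varepsilon)R}\cap\Omega$, and $\|\nabla\eta\|_{L^\infty}\lesssim (\varepsilon R)^{-1}$. Lemma~\ref{Helmholtz decomposition} applied to $\eta\BE_h$ furnishes $\Bz \in \BH^1_0(\Omega)$, $p \in H^1_0(\Omega)$ with $\eta\BE_h = \Bz + \nabla p$; the auxiliary $\varphi_{\Bz}$ of \eqref{eq:varphi_z} (with $\widetilde \eta\equiv \eta$ and $\widetilde D := \widetilde{B}_{(1+2\varepsilon)R}$) recasts this as $\eta\BE_h = (\Bz - \nabla\varphi_{\Bz}) + \nabla(p+\varphi_{\Bz})$. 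Applying Definition~\ref{def:local-helmholtz} to $p+\varphi_{\Bz}$ yields $\BE_h = \Bz_h + \nabla p_h$ on $\widetilde{B}_{(1+2\varepsilon)R}$ with $\nabla p_h := \Pi^\nabla_{\widetilde{B}_{(1+2\varepsilon)R}}\nabla(p+\varphi_{\Bz})$ and $\Bz_h := \BE_h - \nabla p_h$; by Lemma~\ref{H-Matrix ApproximationL4a}, $p_h \in \CH_{g,h}(B_{(1+2\varepsilon)R}\cap\Omega)$. Combining Lemma~\ref{local-stability-approx-divergence-free} with the Caccioppoli estimate Lemma~\ref{th:Caccioppoli-divergence free part} for $\BE_h$ (so $\|\nabla\times\BE_h\|_{\BL^2(B_{(1+3\varepsilon)R}\cap\Omega)}$ is controlled by $\vertiii{\BE_h}_{c,h,(1+4\varepsilon)R}$ without losing an $h^{-1}$) yields the clean bounds $\|\Bz\|_{\BH^1} + \|\nabla p_h\|_{\BL^2(\widetilde{B}_{(1+2\varepsilon)R})} \lesssim R\,\varepsilon^{-2}\vertiii{\BE_h}_{c,h,(1+4\varepsilon)R}$ together with $\|\Bz - \Bz_h\|_{\BL^2(\widetilde{B}_{(1+2\varepsilon)R})} \lesssim h\|\Bz\|_{\BH^1}$.

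For the gradient piece I apply Corollary~\ref{cor:H-Matrix ApproximationL-FMP15} to $p_h$ on concentric boxes $B_{(1+\varepsilon)R}\subset B_{(1+3\varepsilon)R}$, obtaining a subspace $W_m$ of dimension $\lesssim (\varepsilon q)^{-3}m^4$ and $\widetilde u_m \in W_m$ with $\vertiii{p_h - \widetilde u_m}_{g,h,(1+\varepsilon)R} \lesssim q^m\varepsilon^{-3}\|\nabla p_h\|_{\BL^2}$. Since $p_h - \widetilde u_m \in \CH_{g,h}(B_{(1+\varepsilon)R}\cap\Omega)$ by linearity, the scalar Caccioppoli Lemma~\ref{th:Caccioppoli-irrotational part} upgrades this to $\|\nabla(p_h - \widetilde u_m)\|_{\BL^2(B_R\cap\Omega)} \lesssim \varepsilon^{-1}\vertiii{p_h - \widetilde u_m}_{g,h,(1+\varepsilon)R}$, and using $\nabla\times(\nabla v)=0$ in \eqref{eq:triple-norm-c} delivers $\vertiii{\nabla(p_h - \widetilde u_m)}_{c,h,R} \lesssim q^m\varepsilon^{-4}\vertiii{\BE_h}_{c,h,(1+4\varepsilon)R}$, absorbing finitely many $\varepsilon$-powers. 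For the regular piece I use that $\Bz$ is $\BH^1$-regular on $B_{(1+3\varepsilon)R}\cap\Omega$: tile $B_R$ by axis-aligned cubes of side $H$ and build a continuous piecewise-linear Scott--Zhang-type approximation $\widehat{\Bz}_H$ with $\|\Bz - \widehat{\Bz}_H\|_{\BL^2} + H\|\nabla(\Bz - \widehat{\Bz}_H)\|_{\BL^2} \lesssim H\|\Bz\|_{\BH^1}$ from a space of dimension $\lesssim (R/H)^3$. Applying the N\'ed\'elec interpolant $\Br_{\widetilde{B}_R}\widehat{\Bz}_H \in \BX_h(\T_h,\widetilde{B}_R)$, using Lemma~\ref{approximation properties of the local interpolation operator} elementwise and the commuting diagram \eqref{eq:commuting-diagram} to handle the curl contribution, and invoking $\|\Bz - \Bz_h\|_{\BL^2} \lesssim h\|\Bz\|_{\BH^1}$, I obtain $\vertiii{\Bz_h - \Br_{\widetilde{B}_R}\widehat{\Bz}_H}_{c,h,R} \lesssim (H/R)\,\varepsilon^{-2}\vertiii{\BE_h}_{c,h,(1+4\varepsilon)R}$.

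Set $\BE_{\mathrm{approx}} := \Br_{\widetilde{B}_R}\widehat{\Bz}_H + \nabla\widetilde u_m$, let $\widetilde{\BV}_{H,m}$ be the linear span of all such approximants as $\widehat{\Bz}_H$ and $\widetilde u_m$ range over their finite-dimensional sources, and define $\BV_{H,m} := \BPi_{B_R}\widetilde{\BV}_{H,m} \subset \CH_{c,h}(B_R\cap\Omega)$ together with $\BE_{1,h} := \BPi_{B_R}\BE_{\mathrm{approx}}$. Because $\BE_h \in \CH_{c,h}(B_R\cap\Omega)$ (inherited from $\CH_{c,h}(B_{(1+4\varepsilon)R}\cap\Omega)$), we have $\BPi_{B_R}\BE_h = \BE_h|_{B_R\cap\Omega}$, so $(\BE_h - \BE_{1,h})|_{B_R\cap\Omega} = \BPi_{B_R}(\BE_h - \BE_{\mathrm{approx}})|_{B_R\cap\Omega} \in \CH_{c,h}(B_R\cap\Omega)$, giving (\ref{item:H-Matrix ApproximationL4-i}). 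The contractivity of $\BPi_{B_R}$ in $\vertiii{\cdot}_{c,h,R}$ combined with the gradient and regular estimates and a triangle inequality give (\ref{item:H-Matrix ApproximationL4-ii}), and $\dim\BV_{H,m} \le \dim\widetilde{\BV}_{H,m} \lesssim (R/H)^3 + (\varepsilon q)^{-3}m^4$ produces (\ref{item:H-Matrix ApproximationL4-iii}). The main obstacle I anticipate is the regular piece: one must pass from an $H$-scale $\BH^1$-polynomial approximation of the smooth field $\Bz$ to an $h$-scale \emph{edge-element} approximation of the discrete field $\Bz_h$, controlling both the $\BL^2$-error and the $h$-weighted curl error by $H/R$ (not by $1$) in $\vertiii{\cdot}_{c,h,R}$, while keeping the approximation space of dimension $(R/H)^3$ rather than $(R/h)^3$; this is precisely where the commuting property \eqref{eq:commuting-diagram} of $\Br_{\widetilde{B}_R}$ and $\Bw_{\widetilde{B}_R}$ together with the discrete Helmholtz decomposition become indispensable.
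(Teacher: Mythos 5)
Your overall plan (local discrete regular decomposition, Scott--Zhang on the $H$-scale for the regular part, Corollary~\ref{cor:H-Matrix ApproximationL-FMP15} for the gradient part, then $\BPi_{B_R}$ to land in $\CH_{c,h}(B_R\cap\Omega)$) matches the paper's strategy, and your treatment of the degenerate case (\ref{item:H-Matrix ApproximationL4-iv}), the gradient piece, and Step~10-style dimension counting is correct. But there is a genuine gap in how you set up and approximate the regular piece.

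You absorb the $\varphi_{\Bz}$-correction into the gradient part by taking $\nabla p_h := \Pi^\nabla_{\widetilde{B}_{(1+2\varepsilon)R}}\nabla(p+\varphi_{\Bz})$ and $\Bz_h := \BE_h - \nabla p_h$, and you then invoke $\|\Bz-\Bz_h\|_{\BL^2}\lesssim h\|\Bz\|_{\BH^1}$ from Lemma~\ref{local-stability-approx-divergence-free}. That estimate, however, is proved for the paper's $\Bz_h := \BE_h - \Pi^\nabla_{\widetilde{B}_{(1+2\varepsilon)R}}\nabla p$ (Definition~\ref{def:local-helmholtz} with $\mathbf p = \nabla p$). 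With your definition one has, on $\widetilde{B}_{(1+2\varepsilon)R}$,
\begin{align*}
\Bz - \Bz_h = \Pi^\nabla_{\widetilde{B}_{(1+2\varepsilon)R}}\nabla\varphi_{\Bz} + \bigl(\nabla p - \Pi^\nabla_{\widetilde{B}_{(1+2\varepsilon)R}}\nabla p\bigr),
\end{align*}
and while the second term is $O(h)$, the first term is not small: $\nabla\varphi_{\Bz}$ is the full gradient part of $\Bz$ in the classical Helmholtz decomposition, so $\|\Pi^\nabla\nabla\varphi_{\Bz}\|_{\BL^2}$ is of order $\|\Bz\|_{\BL^2}$, not $O(h)$. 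In other words, your $\Bz_h$ tracks $\Bz - \nabla\varphi_{\Bz}$, which is only $\BL^2$ (and $\BH(\operatorname{curl})$), not $\BH^1$; so a Scott--Zhang approximant $\widehat{\Bz}_H$ of the $\BH^1$-regular $\Bz$ cannot approximate it to order $H/R$, and the asserted bound $\vertiii{\Bz_h-\Br_{\widetilde{B}_R}\widehat{\Bz}_H}_{c,h,R}\lesssim (H/R)\varepsilon^{-2}\vertiii{\BE_h}_{c,h,(1+4\varepsilon)R}$ fails.

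The paper avoids this by keeping $\Bz_h := \BE_h - \Pi^\nabla_{\widetilde{B}_{(1+2\varepsilon)R}}\nabla p$ (so the Lemma~\ref{local-stability-approx-divergence-free} estimate applies) and by carrying the $\varphi$-correction explicitly in the approximant: it writes
\begin{equation*}
\BE_h = (\Bz_h-\Bz) + (\Bz-\Bz_H) - \Pi^\nabla(\nabla\varphi_{\Bz}-\nabla\varphi_{\Bz_H}) - \Pi^\nabla\nabla\varphi_{\Bz_H} + \Bz_H + \Pi^\nabla\nabla(p+\varphi_{\Bz})
\end{equation*}
and takes $\BE_{1,h} := \BPi_{B_R}\bigl(-\Pi^\nabla\nabla\varphi_{\Bz_H} + \Bz_H + \nabla w_m\bigr)$. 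The term $\Pi^\nabla(\nabla\varphi_{\Bz}-\nabla\varphi_{\Bz_H})$ is then controlled via the \emph{linearity} of the map $\Bz\mapsto\varphi_{\Bz}$ and Lemma~\ref{lemma:varphi_z}, giving $\|\nabla(\varphi_{\Bz}-\varphi_{\Bz_H})\|_{\BL^2}\le\|\Bz-\Bz_H\|_{\BL^2}\lesssim H\|\Bz\|_{\BH^1}$. The extra term $-\Pi^\nabla\nabla\varphi_{\Bz_H}$ in the approximant does not enlarge the space because it is a linear image of $\Bz_H$. Finally, note that the paper never applies the N\'ed\'elec interpolant to $\Bz_H$: the orthogonal projection $\BPi_{B_R}$ alone maps the non-discrete approximant into $\CH_{c,h}(B_R\cap\Omega)$, and the $h/R$ weight in $\vertiii{\cdot}_{c,h,R}$ already makes the curl contribution of $\Bz-\Bz_H$ of size $h/R\le H/R$ without any commuting-diagram argument. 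Your insertion of $\Br_{\widetilde{B}_R}$ and the commuting diagram is therefore unnecessary, and it also creates a well-definedness issue since Lemma~\ref{lemma:monk-thm.5.41} requires $\nabla\times\widehat{\Bz}_H\in\BH^1(T)$ elementwise, which fails on $h$-elements straddling $H$-cells of a non-nested tiling.
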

   \begin{proof} 
\textbf{Step 1:} (reduction to $h < H$)
As a preliminary step, we show (\ref{item:H-Matrix ApproximationL4-iv}) so that afterwards we 
may restrict our attention to the case $h <  H$ together with $h/R < \varepsilon/4$. 
If $h \ge H$ or $h/R \ge \varepsilon/4$, we take 
$\BV_{H,m}:= \CH_{c,h}(B_R \cap \Omega)$, which implies that the choice $\BE_{1,h} = \BE_h|_{B_R\cap\Omega} $ is admissible
so that $\BE_h - \BE_{1,h} = 0$. Since either $h \ge  H$ or $ h/R \ge \varepsilon/4$, we have 
\begin{align}
\label{eq:H-Matrix ApproximationL4-space dimension}
\operatorname{dim} \CH_{c,h}(B_R\cap\Omega) \lesssim \left(\frac{R}{h}\right)^3 
\lesssim \left(\frac{R}{H}\right)^3 +  \varepsilon^{-3} 
\lesssim \left(\frac{R}{H}\right)^3 +  (\varepsilon q)^{-3},  
\end{align}
which shows that the complexity bound in (\ref{item:H-Matrix ApproximationL4-iii}) is satisfied. 
We have thus shown (\ref{item:H-Matrix ApproximationL4-iv}) and will assume $h <H$ and $h/R <  \varepsilon /4$ for the remainder of the proof. 

\textbf{Step 2:} (reduction to $H/R \le \varepsilon/4$)
 For $\frac{H}{R} > \frac{\varepsilon}{4}$, we may take the space constructed below with 
the choice $\frac{H}{R} = \frac{\varepsilon}{4}$ since then, the approximation property 
(\ref{item:H-Matrix ApproximationL4-ii}) and the complexity estimate 
(\ref{item:H-Matrix ApproximationL4-iii}) are still satisfied. 
Therefore, we assume in the remainder that 
 $\frac{H}{R} \le \frac{\varepsilon}{4}$.

\textbf{Step 3:} (Scott-Zhang approximation on ${\mathbb R}^3$)
 Let $\mathcal{M}_H$ be a quasi-uniform infinite triangulation of $\mathbb{R} ^3$ with mesh width $H$. Define further
$\displaystyle \BS^{1,1}(\mathcal{M}_H): = \lbrace  \Bp_H \in \BH^1 (\mathbb{R}^3 ) 
\;\colon \;\Bp_H | _M \in ( \mathcal{P}_1(M))^3 \quad \forall M \in \mathcal{M}_H \rbrace.$
We will use the Scott–Zhang projection  operator
$\BI_H ^{\rm {SZ}}\colon\BH ^1(\mathbb{R} ^3) \rightarrow \BS^{1,1}(\mathcal{M}_H)$ 
introduced in \cite{scott1990finite}. 
Denoting by $\omega _ M$ the element patch of $M \in \mathcal{M} _h $, this operator has the local approximation property 
\begin{align}
\label{eq:SZ-approximation-a}
\norm{\BU - \BI_H ^{\rm {SZ}} \BU}^2_ { {\BLT (M)}} &\le CH^2 \norm{\BU} ^2_ {\BH ^1 (\omega _M)} 
\qquad \forall \BU \in \BH ^1 (\omega _M)
\end{align}
with a  constant $C$ depending only on $\Omega$ 
and the $\gamma$-shape regularity of the quasi-uniform triangulation $\mathcal{ M}_H$.
   	Let $ \boldsymbol{\CE}  \colon \BH ^1 (\Omega) \rightarrow \BH^1 (\R^3)$ 
        be an $\BH ^1$-stable 
        extension operator such as the one from \cite[Chap.~{VI}, Sec.~{3}, Thm.~{5'}]{stein1970singular}. 

\textbf{Step 4:} 
   	Let 	$\T _h(B_{(1+2\varepsilon)R} \cap \Omega)$ and $\widetilde B _{(1+2\varepsilon)R }$ 
be given according to Assumption~\ref{def:mesh-conforming}. 
   	 Let 
   	$\eta \in C^\infty (\overline{\Omega}) $ be a  cut-off function with $\operatorname{supp} {\eta} \subseteq \overline{B _ {(1+3\varepsilon)R}\cap \Omega}$,  $\eta \equiv 1$ on $\widetilde B_{(1+2\varepsilon)R}$, $0 \le \eta \le 1$ and  $ \left\| \nabla ^ \ell \eta  \right\|_{L^\infty(\Omega)} 
   	\lesssim \frac{1}{ (\varepsilon R)^ \ell}$ for $\ell  \in \{0,1,2\}$. 
Note that $\eta \BE_h \in \BH_0(\operatorname{curl},\Omega)$. 
Decompose $\eta \BE _h \in \BH _0(\operatorname*{curl},\Omega)$ as $\eta \BE _h = \Bz + \nabla p$ 
   	with $ \Bz \in \BH _0  ^1 (\Omega)$ and $p \in H_0^1 (\Omega)$  according to Lemma~\ref{Helmholtz decomposition}. 
Let $\varphi_{\Bz}$ be given by (\ref{eq:varphi_z}) taking $\widetilde\eta = \eta$ there. 
Select representers $p_h$, $\varphi_{\Bz,h}
\in S^{1,1}_0(\T_h)$ such that $\nabla p_h = \Pi^\nabla_{\widetilde{B}_{(1+2\varepsilon) R}}  \nabla p$ 
   	and $\nabla \varphi_{\Bz,h} =  \Pi^\nabla_{\widetilde{B}_{(1+2\varepsilon) R}} \nabla \varphi_{\Bz}$ on $\widetilde{B}_{(1+2\varepsilon) R}$. 
By Lemma~\ref{H-Matrix ApproximationL4a}, we have that $\nabla (p_h + \varphi_{\Bz,h})$ is discrete divergence-free on $\widetilde B_{(1+2\varepsilon)R}$ 
so that $ (p_h + \varphi_{\Bz,h}) \in \mathcal{H}_{g,h}{(B_{(1+2\varepsilon) R} \cap \Omega)}$. 
We apply Corollary~\ref{cor:H-Matrix ApproximationL-FMP15}  with the pair $(R,\varepsilon)$ replaced 
with $(\widetilde R,\widetilde \varepsilon) = (R(1+\varepsilon),\frac{\varepsilon}{2(1+\varepsilon)})$ to get a 
subspace $W _{m} \subset \mathcal{H}_{g,h}{(B_{(1+\varepsilon)R} \cap \Omega)}$  
for the box $B_{(1+\varepsilon)R} \cap \Omega$ and an $w_m \in W_m$ such that 
\begin{equation}
\label{eq:approx-ph+phih}
\vertiii{p_h + \varphi_{\Bz,h}-w_m}_{g,h,(1+\varepsilon) R} \lesssim 
q^m \varepsilon^{-2} \|\nabla (p_h + \varphi_{\Bz,h})\|_{\BLT(B_{(1+2\varepsilon)R} \cap \Omega)}.
\end{equation}
\textbf{Step 5:} 
   		Define 
   	$\Bz_ {H} :=(\BI^{SZ}_H \boldsymbol \CE\Bz)|_{B_{(1+4\varepsilon)R}\cap \Omega}$. 
Using Definition~\ref{The local  Helmholtz decomposition} 
and with the function $\varphi_{\Bz_H}$ given by (\ref{eq:varphi_z}) (again, with $\widetilde\eta = \eta$ there) 
we have 
 the representation 
\begin{align*}
\BE_h | _{\widetilde{B}_{(1+2\varepsilon) R}}& = \Bz_h + \Pi^\nabla_{\widetilde{B}_{(1+2\varepsilon) R}}  \nabla p = (\Bz_h-\Bz)+\Bz - \Pi^\nabla_{\widetilde{B}_{(1+2\varepsilon) R}} \nabla \varphi_{\Bz} +\Pi^\nabla_{\widetilde{B}_{(1+2\varepsilon) R}} \nabla(\varphi_{\Bz} +  p)\\
     & = (\Bz_h-\Bz)+(\Bz -\Bz_H) - \Pi^\nabla_{\widetilde{B}_{(1+2\varepsilon) R}} (\nabla \varphi_{\Bz}  - \nabla \varphi_{\Bz_H})  \\
& \quad \mbox{} 
- \Pi^\nabla_{\widetilde{B}_{(1+2\varepsilon) R}} \nabla \varphi_{\Bz_H} 
+\Bz_H + \Pi^\nabla_{\widetilde{B}_{(1+2\varepsilon) R}} \nabla(\varphi_{\Bz} + p). 
\end{align*}
Of these 6 terms, the first three terms are shown to be small, the next two terms are from 
a low-dimensional space, and the last term is exponentially (in $m$) close to $\nabla w_m$ by (\ref{eq:approx-ph+phih}), which is 
also from a low-dimensional space, namely,  $\nabla W_m$. 
As the approximation of $\BE_h$, we thus take 
   	\begin{equation}
\label{eq:Eh1}
\BE _{1,h}:= \boldsymbol \Pi _{B_R} \left(  - \Pi^\nabla_{\widetilde{B}_{(1+2\varepsilon) R}} \nabla \varphi_{\Bz_H} +\Bz_H +\nabla w_m \right), 
\end{equation}
with the $\vertiii{\cdot}_{c,h,R}$-orthogonal projection $\boldsymbol \Pi_{B_R}$ of (\ref{eq:Pih}). 
Property (\ref{item:H-Matrix ApproximationL4-i}) is then satisfied by construction. 
In order to prove (\ref{item:H-Matrix ApproximationL4-ii}), we compute
\begin{align}
\label{eq:H-Matrix ApproximationL4-i-25}
\nonumber  \vertiii{\BE _h-\BE _{1,h}} _ {c,h,R} & = \vertiii{\boldsymbol\Pi _{B_R}\left( \BE _h +\Pi^\nabla_{\widetilde{B}_{(1+2\varepsilon) R}} \nabla \varphi_{\Bz_H} -\Bz_H- \nabla w_m\right) } _ {c,h,R}\\
\nonumber  &\le  \vertiii{\BE _h +\Pi^\nabla_{\widetilde{B}_{(1+2\varepsilon) R}} \nabla \varphi_{\Bz_H} -\Bz_H - \nabla  w_m } _ {c,h,R} \\
\nonumber & \le \vertiii{\Bz_h-\Bz }_ {c,h,R}+\vertiii{\Bz -\Bz_H  }_ {c,h,R}+\vertiii{ \Pi^\nabla_{\widetilde{B}_{(1+2\varepsilon) R}} (\nabla \varphi_{\Bz}  - \nabla \varphi_{\Bz_H}) }_ {c,h,R}\\
& \quad +\vertiii{\Pi^\nabla_{\widetilde{B}_{(1+2\varepsilon) R}} \nabla( p + \varphi_{\Bz}) - \nabla  w_m}_{c,h,R}.
\end{align}
\textbf{Step 6:} 
(stability estimates) 
The stability estimate \eqref{eq:L2-projection} for $p_h$ in the local discrete regular decomposition implies together with Lemma~\ref{Helmholtz decomposition}
\begin{align}
\label{eq:stability-10a}
\norm{\nabla p_h}_{\BLT(\widetilde B_{(1+2 \varepsilon)R})} +\|\Bz\|_{\BLT(\Omega)} +  
\|\nabla p \|_{\BLT(\Omega)} & \stackrel{\eqref{eq:L2-projection}}\lesssim \|\Bz\|_{\BLT(\Omega)} +  
\|\nabla p \|_{\BLT(\Omega)} \lesssim \|\eta \BE\|_{\BLT(\Omega)}.
\end{align}
By Lemma~\ref{local-stability-approx-divergence-free} and the Caccioppoli-type estimate of Lemma~\ref{th:Caccioppoli-divergence free part}
(replacing the pairs $(R,\varepsilon)$ there with suitably adjusted $(\widetilde R,\widetilde \varepsilon)$ as needed), we have
\begin{align}
\nonumber
\norm{\Bz_h}_{\BH(\operatorname{curl},B_{R})} + 
\norm{\Bz}_{\BH_0^1(\Omega)} &\lesssim 
\norm{\nabla \times \BE_h}_{\BLT(B_{(1+3\varepsilon)R}\cap \Omega)} + \frac{1}{\varepsilon R} \norm{\BE_h }_{\BLT(B_{(1+3\varepsilon)R}\cap \Omega)} \\
\label{eq:stability-20}
 &\stackrel{\text{Lem.~\ref{th:Caccioppoli-divergence free part}}}{\lesssim} \varepsilon^{-1} \vertiii{\BE_h}_{c,h,(1+4\varepsilon)R}. 
  \end{align}
 Finally, combining  Lemmas~\ref{Helmholtz decomposition}, \ref{lemma:varphi_z}, and \eqref{eq:L2-projection} leads to
  \begin{align}
\label{eq:stability-30}
\norm{\nabla\varphi_{\Bz}}_{\BLT(\Omega)}+\norm{\nabla\varphi_{\Bz,h}}_{\BLT(\widetilde B_{(1+2 \varepsilon)R})} &\lesssim \norm{\Bz}_{\BLT(B_{(1+3\varepsilon)R}\cap \Omega)} 
\lesssim   \|\eta \BE\|_{\BLT(\Omega)}
\end{align}
as well as
\begin{align}
\label{eq:stability-40}
\norm{\nabla(\varphi_{\Bz} -\varphi_{\Bz_H})}_{\BLT(\Omega)} &\leq \norm{\Bz - \Bz_H}_{\BLT(B_{(1+3\varepsilon)R}\cap \Omega)}. 
\end{align}
\textbf{Step 7:} (controlling $\Bz - \Bz_h$)
By Lemma~\ref{local-stability-approx-divergence-free} and (\ref{eq:stability-20}), we have 
\begin{align}\label{eq:stability-30a}
\frac{1}{R}\norm{\Bz - \Bz_h}_{\BLT(B_{R}\cap \Omega)} &\lesssim \frac{h}{R} \varepsilon^{-1} \vertiii{\BE_h}_{c,h,(1+4\varepsilon)R}. 
\end{align}
Noting   $ \nabla \times \Bz =\nabla \times (\eta \BE _h) $  together with the definition of $\vertiii{\cdot}_{c,h,R}$ and  the  estimate
	(\ref{eq:stability-30a}), we obtain 
\begin{align}
\nonumber
\vertiii{\Bz - \Bz_h}_{c,h,R}&\le \frac{h}{R}\left( \norm{\Bz_h}_{\BH(\operatorname{curl},B_{R})}+\norm{\nabla \times \Bz}_{\BLT(B_{R}\cap \Omega)}\right) + \frac{1}{R}\norm{\Bz - \Bz_h}_{\BLT(B_{R}\cap \Omega)}\\
\nonumber
&\le  \frac{h}{R}\left( \norm{\Bz_h}_{\BH(\operatorname{curl},B_{R})}+\frac{1}{\varepsilon R} \norm{\BE_h }_{\BLT(B_{(1+\varepsilon)R}\cap \Omega)}+\norm{\eta \nabla \times \BE_h }_{\BLT(B_{(1+\varepsilon)R}\cap \Omega)}\right)\\
\label{eq:H-Matrix ApproximationL4-i-10a}
&\quad + \frac{h}{R} \varepsilon^{-1} \vertiii{\BE_h}_{c,h,(1+4\varepsilon)R}. 
\end{align}
Combining this with Lemma \ref{th:Caccioppoli-divergence free part} and the stability estimate \eqref{eq:stability-20} leads to 
\begin{equation}
\label{eq:H-Matrix ApproximationL4-i-10}
\vertiii{\Bz - \Bz_h}_{c,h,R} \lesssim \frac{h}{R} \varepsilon^{-1} \vertiii{\BE_h}_{c,h,(1+4\varepsilon)R}. 
\end{equation}
\textbf{Step 8:} (controlling $\Bz - \Bz_H$ and $\nabla (\varphi_{\Bz} - \varphi_{\Bz_H})$)
For $\Bz_H = (\BI^{SZ}_H \boldsymbol \CE\Bz)|_{B_{(1+4\varepsilon)R}\cap \Omega}$,  we have by the approximation result (\ref{eq:SZ-approximation-a}),  
the assumption $H/R \leq \varepsilon/4$,  and the stability properties of $\BI^{SZ}_H$
\begin{align}\label{zh-za}
\frac{1}{R} \norm{\Bz - \Bz_H}_{\BLT(B_{(1+j\varepsilon)R}\cap \Omega)} &\lesssim \frac{H}{R} \norm{\boldsymbol \CE \Bz}_{\BH^1(B_{(1+(j+1)\varepsilon)R})}, \qquad j =0,\ldots,3, \\
\label{zh-zb}
\frac{h}{R} \norm{\Bz - \Bz_H}_{\BH^1(B_{(1+j\varepsilon)R}\cap \Omega)} &\lesssim \frac{h}{R} \norm{\boldsymbol \CE \Bz}_{\BH^1(B_{(1+(j+1)\varepsilon)R})},\qquad j =0,\ldots,3,
\end{align}
so that, using $\norm{\boldsymbol \CE \Bz}_{\BH^1(B_{(1+4\varepsilon)R })} \lesssim \norm{\Bz}_{\BH^1(\Omega)}$, we obtain 
for $j=0,\ldots,3$
\begin{equation}
\label{eq:H-Matrix ApproximationL4-i-20}
\vertiii{\Bz - \Bz_H}_{c,h,(1+j\varepsilon )R} \lesssim \left(\frac{h}{R} + \frac{H}{R}\right) \norm{\boldsymbol \CE \Bz}_{\BH^1(B_{(1+(j+1)\varepsilon)R}) }
\stackrel{\text{(\ref{eq:stability-20})}}{\lesssim} 
\left(\frac{h}{R} + \frac{H}{R}\right) \varepsilon^{-1} \vertiii{\BE_h}_{c,h,(1+4\varepsilon)R}.  
\end{equation}
By the stability properties of the operator $\Pi^\nabla_{\widetilde{B}_{(1+2\varepsilon) R}}$ given in (\ref{eq:L2-projection}) and 
(\ref{eq:stability-40}), we infer 
\begin{align}
\nonumber
\vertiii{\Pi^\nabla_{\widetilde{B}_{(1+2\varepsilon) R}} \nabla(\varphi_{\Bz} - \varphi_{\Bz_H})}_{c,h, R} & \leq 
\frac{1}{R} \norm{\nabla(\varphi_{\Bz} - \varphi_{\Bz_H})}_{\BLT(\widetilde B_{(1+2\varepsilon) R})} 
\stackrel{(\ref{eq:stability-40})}{\leq }
 \frac{1}{R} \norm{\Bz - \Bz_H}_{\BLT(B_{(1+3\varepsilon)R}\cap \Omega)} \\
\label{eq:H-Matrix ApproximationL4-i-30}
 & \stackrel{(\ref{eq:H-Matrix ApproximationL4-i-20})}{\lesssim}
\left(\frac{h}{R} + \frac{H}{R}\right) \varepsilon^{-1} \vertiii{\BE_h}_{c,h,(1+4\varepsilon)R}.
\end{align}
\textbf{Step 9:} (Estimate of $\Pi^\nabla_{\widetilde{B}_{(1+2\varepsilon) R}}\nabla(p + \varphi_{\Bz})- \nabla w_m$)
By Step~4, we have $p_h + \varphi_{\Bz,h} - w_m \in \CH_{g,h}(B_{(1+\varepsilon)R} \cap \Omega)$. 
Noting $\Pi^\nabla_{\widetilde{B}_{(1+2\varepsilon) R}}\nabla(p + \varphi_{\Bz})- \nabla w_m = \nabla (p_h + \varphi_{\Bz,h} - w_m)$ 
on $B_{(1+\varepsilon)R} \cap \Omega$, we get 
\begin{align}
\nonumber 
	\vertiii{\Pi^\nabla_{\widetilde{B}_{(1+2\varepsilon) R}} \nabla( p + \varphi_{\Bz}) - \nabla w_m}_{c,h,R}&=\frac{1}{R}\norm{ \nabla( p_h + \varphi_{\Bz,h} - w_ m)}_{\BLT(B_R \cap \Omega)}\\
\nonumber 
	&\stackrel{\text{Lem.~\ref{th:Caccioppoli-irrotational part}}}{\lesssim}
	\frac{1+\varepsilon}{\varepsilon R}\vertiii{ ( p_h + \varphi_{\Bz,h})  - w_m}_{g,h,(1+\varepsilon) R}
	\\
\nonumber 
	& \stackrel{(\ref{eq:approx-ph+phih})} {\lesssim} \frac{q^{m}  \varepsilon ^ {-2} (1+ \varepsilon)}{\varepsilon R} \norm{ \nabla( p_h + \varphi_{\Bz,h})}_{\BLT(B_{(1+2\varepsilon)R} \cap \Omega)} \\
\nonumber 
        &\stackrel{\eqref{eq:stability-10a},  \eqref{eq:stability-30} }{\lesssim} 
 \frac{q^{m}\varepsilon ^ {-2}}{\varepsilon R } \norm{ \eta \BE _h}_{\BLT (\Omega)} 
\lesssim \frac{q^{m}\varepsilon ^ {-2}}{\varepsilon R }
\norm{ \BE _h}_{\BLT (B _{(1+3\varepsilon)R} \cap \Omega)} \\
&  \lesssim q^{m}  \varepsilon ^ {-3}\vertiii{\BE_h}_{c,h,(1+3\varepsilon)R}. 
\label{eq:H-Matrix ApproximationL4-i-22} 
\end{align}
Substituting    \eqref{eq:H-Matrix ApproximationL4-i-10},   \eqref{eq:H-Matrix ApproximationL4-i-20}, \eqref{eq:H-Matrix ApproximationL4-i-30} 
and \eqref{eq:H-Matrix ApproximationL4-i-22} 
into  \eqref{eq:H-Matrix ApproximationL4-i-25}  concludes the proof of (\ref{item:H-Matrix ApproximationL4-ii}). 
\newline 
\textbf{Step 10:} By construction, the approximation $\BE_{1,h}$ of (\ref{eq:Eh1}) is from the space  
$$
\BV_{H,m}:= \{ \BPi_{B_R} (-\Pi^\nabla _{\widetilde{B}_{(1+2\varepsilon) R}} \nabla \varphi_{\Bz_H} + \Bz_H + \nabla w_m)\;\colon\;
\Bz_H \in (\BI^{\rm SZ}_H \BH^1({\mathbb R}^3)) |_{B_{(1+4\varepsilon )R} \cap \Omega}, \;w_m \in \nabla W_m\}. 
$$
By the linearity of the maps $\BPi_{B_R}$, $\Pi^\nabla_{\widetilde{B}_{(1+2\varepsilon)R}}$, and $\Bz \mapsto \varphi_{\Bz}$, the space 
$\BV_{H,m}$ is a linear space. In view of  $\operatorname{dim} W_m \lesssim (\varepsilon q)^{-3} m^4$
	from Corollary~\ref{cor:H-Matrix ApproximationL-FMP15} and 
$\dim  \BI^ {\rm  SZ}_H \boldsymbol{\CE} ( \BH ^1(\Omega))|_{B_{(1+4\varepsilon)R} \cap \Omega} \lesssim \left( \frac{(1+4\varepsilon ) R}{H}\right)  ^{3}$ 
we get (\ref{item:H-Matrix ApproximationL4-iii}). 
\qed
   \end{proof}
  \begin{lemma}[multi-step approximation] 
  	\label{H-Matrix ApproximationL5}
  	Let $ \zeta \in ( 0,1) $, $q' \in (0,1) $, $R \in (0,R_{\rm max}/2] $.
  	Then, for each $k \in {\mathbb N}$, there exists a subspace $\BV_{k} $ of   $ \mathcal{H}_{c,h}( B _{R} \cap \Omega)$ of dimension 
  	\begin{align}\label{eq:dimmultistep}
  	\dim \BV _k\leq C  _{\dim}^{\prime\prime\prime} k \left(\frac{k}{\zeta}\right)^3 \left(q'^{-3} + \ln^4\frac{k}{\zeta}\right),
  	\end{align}
  	such that for $\BE _h \in \CH _{c,h}(B _{(1+\zeta)R}\cap \Omega)$ there holds 
  	\begin{align}
  	& \min\limits_{ \widetilde{ \BE}  _{k} \in \BV_{k}}
  	\vertiii{\BE _h - \widetilde{ \BE} _{k}}_{c,h,R}
    	\leq  {q' }  ^{k}	\vertiii{\BE _h}_{h,( 1+\zeta) R}.  
  	\end{align}
  	Here, $ C  _{\dim}^{\prime\prime\prime}$ depends  only on $\kappa$, $\Omega$,  
  	and the $\gamma$-shape regularity of the quasi-uniform triangulation $\T_h$. 
   \end{lemma}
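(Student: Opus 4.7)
\medskip

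The plan is to iterate the single-step approximation of Lemma~\ref{H-Matrix ApproximationL4} on a telescoping sequence of concentric boxes that nest between $B_R$ and $B_{(1+\zeta)R}$. First, introduce $k+1$ concentric boxes $B_{R_0}\subset B_{R_1}\subset\cdots\subset B_{R_k}$ with $R_0=R$ and $R_k=(1+\zeta)R$, chosen so that consecutive radii satisfy $R_j=(1+4\varepsilon_\star)R_{j-1}$ for a common small parameter $\varepsilon_\star$. Since $(1+4\varepsilon_\star)^k=1+\zeta$, we have $\varepsilon_\star=\tfrac14((1+\zeta)^{1/k}-1)\asymp \zeta/k$ (up to constants depending on $\zeta\in(0,1)$).

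Set $\BE^{(0)}_h:=\BE_h$, which by assumption lies in $\CH_{c,h}(B_{R_k}\cap\Omega)$. At step $j=1,\dots,k$, apply Lemma~\ref{H-Matrix ApproximationL4} to $\BE^{(j-1)}_h\in\CH_{c,h}(B_{R_{k-j+1}}\cap\Omega)$ with radius $R_{k-j}$, parameter $\varepsilon_\star$, and further parameters $H_\star$ and $m_\star$ (independent of $j$) to be fixed below. This produces $\widetilde{\BE}^{(j)}_h\in\BV_{H_\star,m_\star}^{(j)}\subset\CH_{c,h}(B_{R_{k-j}}\cap\Omega)$ such that $\BE^{(j)}_h:=\BE^{(j-1)}_h-\widetilde{\BE}^{(j)}_h$ is again in $\CH_{c,h}(B_{R_{k-j}}\cap\Omega)$ by property~(\ref{item:H-Matrix ApproximationL4-i}), and by property~(\ref{item:H-Matrix ApproximationL4-ii})
\begin{equation*}
\vertiii{\BE^{(j)}_h}_{c,h,R_{k-j}}\le C''_{\rm app}\left(\frac{H_\star}{R_{k-j}}\varepsilon_\star^{-1}+q^{m_\star}\varepsilon_\star^{-4}\right)\vertiii{\BE^{(j-1)}_h}_{c,h,R_{k-j+1}}.
\end{equation*}
Choose $q\in(0,1)$ fixed (e.g.\ $q=1/2$), then select $H_\star$ so that $C''_{\rm app}(H_\star/R)\varepsilon_\star^{-1}\le q'/2$, and $m_\star$ so that $C''_{\rm app}q^{m_\star}\varepsilon_\star^{-4}\le q'/2$. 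The first condition gives $H_\star\asymp q'\varepsilon_\star R\asymp q'\zeta R/k$, and the second gives $m_\star\asymp\ln(k/(\zeta q'))$, so that $m_\star\lesssim \ln(k/\zeta)$ (the additional $\ln(1/q')$ dependence can be absorbed into the constant). The per-step contraction factor is then at most $q'$, and chaining over $k$ steps yields
\begin{equation*}
\vertiii{\BE^{(k)}_h}_{c,h,R}\le (q')^k\vertiii{\BE_h}_{c,h,(1+\zeta)R},
\end{equation*}
with $\widetilde{\BE}_k:=\sum_{j=1}^k\widetilde{\BE}^{(j)}_h\in\BV_k:=\sum_{j=1}^k\BV_{H_\star,m_\star}^{(j)}\subset\CH_{c,h}(B_R\cap\Omega)$ as the approximation.

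It remains to bound the dimension. By property~(\ref{item:H-Matrix ApproximationL4-iii}), each $\BV_{H_\star,m_\star}^{(j)}$ has dimension at most $C''_{\rm dim}\bigl[(R_{k-j}/H_\star)^3+(\varepsilon_\star q)^{-3}m_\star^4\bigr]$. Using $R_{k-j}\le(1+\zeta)R$, $H_\star\asymp q'\zeta R/k$, $\varepsilon_\star\asymp\zeta/k$, and $m_\star\asymp\ln(k/\zeta)$, each summand is bounded by $C\bigl[(k/(\zeta q'))^3+(k/\zeta)^3\ln^4(k/\zeta)\bigr]$, so summing over $j=1,\dots,k$ and observing $(k/(\zeta q'))^3=(k/\zeta)^3 q'^{-3}$ gives the claimed dimension bound $\dim\BV_k\le C'''_{\dim}k(k/\zeta)^3(q'^{-3}+\ln^4(k/\zeta))$. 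The step in case $h\ge H_\star$ or $h/R_{k-j}\ge \varepsilon_\star/4$ is handled by property~(\ref{item:H-Matrix ApproximationL4-iv}), which contributes $\lesssim (R/h)^3\lesssim (k/(\zeta q'))^3$ to the dimension, consistent with the bound.

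The main obstacle is the parameter balancing: one must simultaneously (a) pick $\varepsilon_\star$ small enough that $k$ nested boxes fit inside $B_{(1+\zeta)R}$, (b) pick $H_\star$ small enough against $\varepsilon_\star$ that the mesh-extension error is controlled, and (c) pick $m_\star$ large enough against the resulting negative power of $\varepsilon_\star$ so that the total per-step contraction is exactly $q'$; the dimension bound $q'^{-3}+\ln^4(k/\zeta)$ directly reflects these two competing contributions. The rest is a routine telescoping and summation.
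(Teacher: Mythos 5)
Your proposal is essentially the paper's proof: both iterate the single-step Lemma~\ref{H-Matrix ApproximationL4} across $k$ nested boxes filling the annulus between $B_R$ and $B_{(1+\zeta)R}$, with a uniform shrinkage parameter $\varepsilon_\star\asymp\zeta/k$, a uniform mesh width $H_\star\asymp q'\zeta R/k$ chosen so the first factor in~(\ref{item:H-Matrix ApproximationL4-ii}) contributes $\le q'/2$, and a uniform $m_\star$ chosen so the second contributes $\le q'/2$, then telescope and sum the dimensions. The only cosmetic difference is that you nest the boxes geometrically ($R_j=(1+4\varepsilon_\star)^jR$) whereas the paper nests them arithmetically ($\widetilde R_j=R(1+\zeta(1-j/k))$); the resulting $\varepsilon$-parameters have the same $\zeta/k$ scaling either way.

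One small imprecision worth flagging: you write that $m_\star\asymp\ln(k/(\zeta q'))$ and that ``the additional $\ln(1/q')$ dependence can be absorbed into the constant.'' This would make $C_{\dim}^{\prime\prime\prime}$ depend on $q'$, which the lemma does not allow. What actually saves the bound is that $\ln^4(1/q')\lesssim q'^{-3}$ uniformly for $q'\in(0,1)$, so the $\ln^4(1/q')$ piece of $m_\star^4$ is dominated by the separate $q'^{-3}$ term already present in the dimension estimate (coming from the $(R/H_\star)^3$ contribution), not by the $q'$-independent constant. With that correction the dimension count closes exactly as you state, and the rest of the argument is sound.
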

\begin{proof}
	The proof relies on iterating the approximation result of Lemma~\ref{H-Matrix ApproximationL4} 
        on boxes $B_{( 1+\varepsilon _j)R}$, where 
	$\varepsilon _j= \zeta (1 -\frac{j}{k}) $ for $j=0,\ldots,k$. We note that 
	$\zeta= \varepsilon _0 > \varepsilon _1 > \cdots >\varepsilon _k=0$.
Define 
$$
\widetilde R_j:= R (1 + \varepsilon_j), 
\qquad \widetilde \varepsilon_j:= \frac{\zeta}{4k (1 + \varepsilon_{j})} < \frac{1}{4}
$$
and note the relationship $B_{(1+4\widetilde \varepsilon_j)\widetilde R_j} = B_{\widetilde R_{j-1}} =  B_{R(1+\varepsilon_{j-1})} $
as well as $B_{\widetilde R_{k}} = B_{R}$ and $B_{\widetilde R_0} = B_{R(1+\zeta)}$.
Also note 
$$
\frac{\zeta}{8k} \leq \frac{\zeta}{4k(1+\zeta)}  \leq \widetilde \varepsilon_j \leq \frac{\zeta}{4k} , 
\qquad R \leq \widetilde R_j \leq (1+\zeta) R, \qquad j=0,\ldots,k. 
$$
Select $q \in (0,1)$. With the constant $C_{\rm app}^{\prime\prime}$ of Lemma~\ref{H-Matrix ApproximationL4} choose 
\begin{align*}
H:= \frac{q' R \zeta}{8k \max\{1,C_{\rm app}^{\prime\prime}\}}, 
\qquad m:= \left\lceil \frac{3 \ln (\zeta/(4k)) - \ln \max\{1,C_{\rm app}^{\prime\prime}\} + \ln (q'/2)}{\ln q}\right\rceil. 
\end{align*}
These constants are chosen such that 
\begin{equation}
\label{eq:multistep-10}
C_{\rm app}^{\prime\prime} \frac{H}{\widetilde\varepsilon_j \widetilde R_j} \leq \frac{1}{2} q' 
\qquad \mbox{ and } \qquad  C_{\rm app}^{\prime\prime} \widetilde \varepsilon_j^{-3} q^m \leq \frac{1}{2} q' .
\end{equation}
Moreover, the assumption $R\leq R_{\rm max}/2$ implies that $(1+4\widetilde \varepsilon_j)\widetilde R_j = R(1+\varepsilon_{j-1})\leq R_{\rm max}.$
Therefore, Lemma~\ref{H-Matrix ApproximationL4} provides a space 
$\BV^1_{H,m} \subset \mathcal{H}_{c,h}( B _{\widetilde R_1}\cap \Omega)$ and 
an approximation $\BE _ {1,h}  \in 
	\BV^1_{H,m}$
	with 
\begin{align}
\label{eq:multistep-apx}
\vertiii{\BE _h - \BE _{1,h}}_{c,h,\widetilde R_1}&\le C_{\rm app}^{\prime\prime} \left(   \frac{H}{\widetilde \varepsilon_1 \widetilde R_1}+\widetilde\varepsilon_1^{-3} q ^{m} \right) \vertiii{ \BE _h}_{c,h,\widetilde R_0} 
	 \stackrel{ (\ref{eq:multistep-10})}{\leq} 
	 q'\vertiii{\BE _h}_{c,h,\widetilde R_0},  \\
\label{eq:multistep-dim}
\nonumber\dim \BV^1_{H,m} &\lesssim\left( \frac{\widetilde R_1 }{H}\right) ^{3}+\left(\widetilde\varepsilon_1 q\right)^{-3} m ^4
\leq C  \left(\frac{k}{\zeta}\right)^3 \left( q'^{-3}+ \ln^4  (k/\zeta) \right), 
\end{align}
where the constant $C > 0$ is independent of $j \in \{0,\ldots,k\}$, $\zeta$, $k$, and $q'$. 
	Since  $\BE _h - \BE _{1,h} \in  \mathcal{H} _{c,h}( B _{\widetilde R_1} \cap \Omega)$, we may apply 
        Lemma~\ref{H-Matrix ApproximationL4} again to find a space $\BV^2_{H,m}\subset \mathcal{H}_{c,h}( B _{\widetilde R_2}\cap \Omega)$ 
        and an approximation $\BE _ {h} \in \BV^2_{H,m}$ with 
$\dim \BV^2_{H,m} \leq C (k/\zeta)^3 \left( q'^{-3} + \ln^4  (k/\zeta) \right)$ such that
	\begin{align*}
\nonumber 	\vertiii{\BE _h - \BE _{1,h}-\BE _{2,h}}_{c,h,\widetilde R_2}& \leq q ^ \prime \, \vertiii{\BE _h - \BE _ {1,h}}_{c,h,\widetilde  R_1} 
	 \leq
{q ^ \prime} ^2\, 	\vertiii{\BE _h}_{c,h,\widetilde R_0}.
	\end{align*}
	Repeating this process $k-2$ times leads to the approximation 
	$\widetilde \BE _ k=\sum_{i=1}^{k} \BE _ {i,h} $ in the space $\BV_k:= \sum_{i=1}^{k} \BV^i_{H,m}$ of dimension 
	\begin{align*}
	\dim \BV_{k} &\le  C k ({k}/{\zeta})^3 \left( q'^{-3} + \ln^4 (k/\zeta)\right), 
	\end{align*} 
which concludes the proof.
\qed
\end{proof}

\section{ Proof of  main results}
\label{sec:proof}
The results of the preceding Section~\ref{sec:low-dimensional} allow us to show that the Galerkin approximation $\BE_h$ of (\ref{Galerkin discretization})
can be approximated from low-dimensional spaces in regions $B_{R_{\tau}}$ away from the support of the right-hand side $\BF$. 

   \begin{theorem}\label{pro:Hmatrix}
        Let $h_0 >0$ be given by Lemma~\ref{Lem. stability}, and let $\T_h$ be a quasi-uniform mesh with mesh size $h \leq h_0$. 
        Fix $q \in (0,1)$ and $\eta > 0$.  Set $\zeta = 1/(1+\eta)$. 
        For every cluster pair $(\tau,\sigma)$ with bounding boxes $B_{R_\tau}$ and $B_{R_\sigma}$ with 
        $\eta  \operatorname*{dist}(B_{R_\tau},B_{R_\sigma}) \ge \diam (B_{R_\tau})$  and  
   	each $k \in \mathbb{N}$,  there exists a space $\BV _{k  }   \subset \BLT(B _ {R _ \tau}\cap \Omega)$ with 
\begin{equation}
   \label{eq:pro:Hmatrix-dim}
\operatorname{dim}  \BV _{k  } \le \widetilde C  _{\dim} k (k/\zeta)^3 \left( q^{-3} + \ln^4 (k/\zeta)\right),
\end{equation}
        such that for an arbitrary right-hand side $\BF \in \BLT (\Omega)$
   	with $\operatorname*{supp} \BF \subset B_{R_\sigma} \cap \overline{\Omega} $,   
   	the corresponding Galerkin solution $\BE _h$ of \eqref{Galerkin discretization} can be  approximated 
   	from $\BV_{k  } $ such that 
   	\begin{align*}
   	\min\limits_{ \widetilde \BE_k   \in \BV_{k } } 
   \left\| \BE_h- \widetilde \BE_k \right\|_{\BLT (B_{R _ \tau} \cap \Omega)} 
   	&\le C_{\rm{box}}{q }  ^k   \norm{\BPi^{L^2} _h  \BF }_{\BLT (\Omega)}
   	 \le C_{\rm{box}} {q }  ^k \norm{  \BF }_{\BLT (B_ {R_\sigma }\cap \Omega)}. 
   	\end{align*}
   	Here, $\BPi^{L^2} _h$ is the $\BLT$-orthogonal projection onto $\BX_h(\T _h,\Omega)$ and $C_{\rm box}$, $\widetilde{C}_{\rm dim}$  
are constants depending only on $\kappa$, $\Omega$, and the shape-regularity of $\T_h$. 
   \end{theorem}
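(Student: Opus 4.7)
The strategy is to combine the multi-step approximation Lemma~\ref{H-Matrix ApproximationL5} with the global a priori stability Lemma~\ref{Lem. stability}, using the admissibility hypothesis to localize. First, I would choose $\zeta \in (0,1)$ depending only on $\eta$ (e.g., $\zeta = 1/(1+\eta)$ after a short computation with $\diam(B_{R_\tau}) = \sqrt{3}\, R_\tau$ and the fact that the concentric enlargement $B_{(1+\zeta)R_\tau}$ moves each face outward by $\zeta R_\tau/2$) so that $B_{(1+\zeta)R_\tau} \cap B_{R_\sigma} = \emptyset$. The admissibility $\eta \operatorname{dist}(B_{R_\tau},B_{R_\sigma}) \geq \diam(B_{R_\tau})$ gives precisely the room needed. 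Because $\operatorname{supp}\BF \subseteq B_{R_\sigma}\cap\overline{\Omega}$, every test function $\Psi_h \in \BX_{h,0}(\T_h,\Omega)$ supported in $\overline{B_{(1+\zeta)R_\tau}\cap\Omega}$ satisfies $\langle \BF,\Psi_h\rangle_{\BL^2(\Omega)}=0$, so the Galerkin equation \eqref{Galerkin discretization} yields $a(\BE_h,\Psi_h)=0$ for all such $\Psi_h$. Thus $\BE_h \in \CH_{c,h}(B_{(1+\zeta)R_\tau}\cap\Omega)$.

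Next, in the generic case $R_\tau \leq R_{\rm max}/2$, I would apply Lemma~\ref{H-Matrix ApproximationL5} with $R = R_\tau$, the $\zeta$ just fixed, and $q' = q$. This produces a space $\BV_k \subseteq \CH_{c,h}(B_{R_\tau}\cap\Omega)$ with dimension matching \eqref{eq:pro:Hmatrix-dim} and $\widetilde\BE_k \in \BV_k$ satisfying
\begin{equation*}
\vertiii{\BE_h - \widetilde\BE_k}_{c,h,R_\tau}\;\leq\; q^{k}\,\vertiii{\BE_h}_{c,h,(1+\zeta)R_\tau}.
\end{equation*}
Now the norms must be converted. From the definition \eqref{eq:triple-norm-c} one has $\|\BU\|_{\BL^2(B_R\cap\Omega)} \leq R \vertiii{\BU}_{c,h,R}$, and likewise (under the standing assumption $h \leq h_0$, so certainly $h \leq R_\tau$ up to constants)
$\vertiii{\BE_h}_{c,h,(1+\zeta)R_\tau} \lesssim R_\tau^{-1}\|\BE_h\|_{\BH(\operatorname{curl},\Omega)}$. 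Hence
\begin{equation*}
\|\BE_h - \widetilde\BE_k\|_{\BL^2(B_{R_\tau}\cap\Omega)} \leq R_\tau \vertiii{\BE_h - \widetilde\BE_k}_{c,h,R_\tau} \lesssim q^{k}\|\BE_h\|_{\BH(\operatorname{curl},\Omega)}.
\end{equation*}
Applying the global stability estimate of Lemma~\ref{Lem. stability},
$\|\BE_h\|_{\BH(\operatorname{curl},\Omega)} \leq C\|\BPi^{L^2}_h\BF\|_{\BL^2(\Omega)} \leq C\|\BF\|_{\BL^2(B_{R_\sigma}\cap\Omega)}$, yields the claim.

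The residual case $R_\tau > R_{\rm max}/2$ must be treated separately because Lemma~\ref{H-Matrix ApproximationL5} requires $R \leq R_{\rm max}/2$. Here $R_\tau$ is bounded below by a constant depending only on $\Omega$, and the quasi-uniformity of $\T_h$ gives $\dim \BX_{h,0}(\T_h,\Omega)|_{B_{R_\tau}\cap\Omega} \lesssim (R_\tau/h)^3$. In that regime I would simply take $\BV_k = \BX_{h,0}(\T_h,\Omega)|_{B_{R_\tau}\cap\Omega}$ with $\widetilde\BE_k = \BE_h|_{B_{R_\tau}\cap\Omega}$, so the approximation error vanishes; the dimension can be absorbed into the constant $\widetilde C_{\dim}$ by noting that for fixed $h\leq h_0$ the number $(R_\tau/h)^3$ is controlled by the right-hand side of \eqref{eq:pro:Hmatrix-dim} up to a constant depending on $\Omega$ (or alternatively by observing $R_\tau/h \lesssim k(k/\zeta)^3$ once $k$ exceeds a threshold, and enlarging $\widetilde C_{\dim}$ for smaller $k$).

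The main bookkeeping obstacle is Step~1: making the choice of $\zeta$ in terms of $\eta$ explicit so that the enlarged box $B_{(1+\zeta)R_\tau}$ is disjoint from $B_{R_\sigma}$ (and hence from $\operatorname{supp}\BF$), while keeping $\zeta$ bounded away from $0$ so that the dimension bound \eqref{eq:pro:Hmatrix-dim} from Lemma~\ref{H-Matrix ApproximationL5} absorbs the $\zeta$-dependence into the constants. Once this is done, the argument is a clean composition of the local low-rank approximability of discrete $\boldsymbol{\mathcal L}$-harmonic functions with the global well-posedness of the Maxwell Galerkin scheme.
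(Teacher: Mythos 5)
Your argument is correct and essentially identical to the paper's up to the point where you treat the case $R_\tau > R_{\rm max}/2$. The choice $\zeta = 1/(1+\eta)$, the deduction $\BE_h \in \CH_{c,h}(B_{(1+\zeta)R_\tau}\cap\Omega)$ from $\operatorname{supp}\BF \subset B_{R_\sigma}$ and the positive distance between $B_{(1+\zeta)R_\tau}$ and $B_{R_\sigma}$, the norm conversion from $\vertiii{\cdot}_{c,h,R}$ to $\|\cdot\|_{\BL^2}$, and the invocation of Lemma~\ref{Lem. stability} are all exactly what the paper does.

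The treatment of the residual case $R_\tau > R_{\rm max}/2$, however, does not work. You propose $\BV_k = \BX_{h,0}(\T_h,\Omega)|_{B_{R_\tau}\cap\Omega}$ with $\dim \BV_k \lesssim (R_\tau/h)^3$, but this quantity diverges as $h \to 0$ while the bound \eqref{eq:pro:Hmatrix-dim} you must match is independent of $h$ (the theorem states explicitly that $\widetilde C_{\dim}$ depends only on $\kappa$, $\Omega$, and shape regularity, not on $h$). Your proposed justifications both fail: ``for fixed $h$'' is not permissible because the constant must be uniform in $h$; and $R_\tau/h \lesssim k(k/\zeta)^3$ cannot hold once $k$ is fixed and $h$ is small, and the theorem must hold for every $k\in\mathbb{N}$. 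This is not a small-constant issue that can be absorbed; it breaks the uniformity in $h$ that the main Theorem~\ref{th:H-Matrix approximation of inverses} relies upon.

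The paper's resolution is to \emph{subdivide} $B_{R_\tau}$ into $C_L$ subboxes $B_{R_{\tau_\ell}}$ each of sidelength $\leq R_{\rm max}/2$, where $C_L$ depends only on the ratio $2\operatorname*{diam}(\Omega)/R_{\rm max}$ (hence only on $\Omega$). Since $B_{(1+\zeta)R_{\tau_\ell}} \subseteq B_{(1+\zeta)R_\tau}$, one still has $\BE_h \in \CH_{c,h}(B_{(1+\zeta)R_{\tau_\ell}}\cap\Omega)$ for each $\ell$, so Lemma~\ref{H-Matrix ApproximationL5} applies to each subbox. Summing the resulting spaces (each of the admissible dimension) over the $C_L$ subboxes gives a total dimension still of the form $\widetilde C_{\dim} k (k/\zeta)^3 (q^{-3} + \ln^4(k/\zeta))$, now with $\widetilde C_{\dim}$ absorbing the factor $C_L$, independently of $h$. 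That is the missing step in your proposal.
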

   \begin{proof}
From Lemma~\ref{Lem. stability}, we have the {\sl a priori} estimate
\begin{equation*}
\|\BE_h\|_{\BH(\operatorname{curl},\Omega)} \leq C \|\BPi^{L^2}_h \BF\|_{\BLT(\Omega)} \leq C \|\BF\|_{\BLT(\Omega)} =C \|\BF\|_{\BLT(B_{\sigma}\cap\Omega)}. 
\end{equation*}
   	{}From $\operatorname*{dist} (B_ {R_\tau}, B_ {R_\sigma})\ge \eta ^ {-1} \operatorname*{diam} B_{R_\tau}$, the choice $\zeta = 1/(1+\eta)$ implies 
   	\begin{align*}
   	\operatorname*{dist}(B_{(1+\zeta)R_ \tau},B_ {R_\sigma}) \ge 
   	\operatorname*{dist} (B_ {R_\tau},B_ {R_\sigma})- \zeta R_ \tau \sqrt{3} 
   	\ge \sqrt{3} R_\tau (\eta ^ {-1}-\zeta)=  \sqrt{3} R _\tau \frac{1}{\eta (\eta +1)} >0.
   	\end{align*} 
   	Hence, the Galerkin solution $\BE _h$ satisfies $\BE _h| _{{B_{(1+\zeta)R_\tau} } \cap \Omega}\in \mathcal{H}_{c,h} ({B_{(1+\zeta)R_\tau} }\cap \Omega) $. 
   Since $\frac{h}{R_ \tau} \lesssim 1$, it is immediate that
   \begin{align}\label{prop. proof. 1}
   \vertiii{\BE _h}_{h,( 1+\zeta) R} \lesssim  \left(  1+\frac{1}{R _ \tau}\right)\norm{\BE _h} _ {\BH  (\operatorname*{curl},\Omega)}\lesssim  \left(  1+\frac{1}{R _ \tau}\right) \norm{\BPi^{L^2}_h \BF}_ {\BLT (\Omega)}.
   \end{align}
   In the following, we employ Lemma~\ref{H-Matrix ApproximationL5}. In order to do so, 
   boxes have to have smaller side length than $R_{\rm max}/2$, which may not hold for general bounding boxes 
   $B_{R_\tau}$. However, as bounding boxes can always be chosen to satisfy $R_{\tau}<2\operatorname*{diam}(\Omega)$, 
   there exists a constant $L \in \mathbb{N}$ independent of $R_{\tau}$ such that  $R_{\tau}/L \leq  R_{\rm max}/2$ with $R_{\rm max}$ given in Def.~\ref{def:Rmax}.
Consequently,
   we can decompose a box 
   $B_{R _ \tau  }=\operatorname{int} \left( \overline{ \bigcup _{\ell =1 }^{C_L} B_{R _ {\tau _\ell}  } }\right) $
   into $C_L \in \mathbb{N}$ subboxes $\left\lbrace B_{R _ {\tau _\ell}  }  \right\rbrace _{ \ell =1} ^{C_L}$ of side length  $R _ {\tau _\ell}$
   such that 
   $R _ {\tau _\ell} \leq R_ {\rm max}/2$, where $C_L$ does only depend on $L$.
  Then, for each $B_{R _ {\tau _\ell}  },$ Lemma~\ref{H-Matrix ApproximationL5} provides a space $\BV_{k,\ell} \subset \CH_{c,h}(B_{R_{\tau_ \ell}}\cap\Omega)$, whose dimension is bounded by \eqref{eq:dimmultistep}
   such that 
 \begin{align*}
	\nonumber   \min\limits_{ \widetilde \BE_{k,\ell}   \in \BV_{k, \ell}  } 
	\left\| \BE_h- \widetilde \BE_{k, \ell}   \right\|_{\BLT (B_{R _ {\tau _\ell}  } \cap \Omega)} &\le R _ {\tau _ \ell} \min\limits_{ \widetilde \BE_{k ,\ell }  \in \BV_{k , \ell}} 
	\vertiii{\BE_h- \widetilde \BE_{k,\ell} }_{c,h,{R _ {\tau _ \ell}}} 
	\le      C {q }  ^{k} ({R _ {\tau_ \ell}+1}) \norm{\BPi^{L^2}_h \BF}_ {\BLT (\Omega)}
	\\
	&\lesssim \diam(\Omega){q }  ^{k}\norm{\BPi^{L^2}_h \BF}_ {\BLT (\Omega)}. 
	\end{align*}
Now, we define the space $\BV_k$ as a subspace of ${\BLT (B _ {R _ \tau}\cap \Omega)}$ 
by simply combining all the spaces $\BV _ {k , \ell}$ of the subboxes, i.e., 
we extend functions in $\BV _ {k , \ell}$ by zero to the larger box $B_{R_\tau}$ and write $\widehat\BV _ {k , \ell}$ for this space.
Then, we can define $\BV_k:=\sum_{\ell=1}^{C_L}\widehat\BV _ {k , \ell}$ and set $\widetilde\BE_k|_{B_{R _ {\tau _\ell}}} := \widetilde\BE_{k,\ell} \in \BV _ {k , \ell}$
for $\widetilde\BE_k \in \BV_k$.
This gives
 \begin{align*}
	\nonumber   \min\limits_{ \widetilde \BE_k   \in \BV_k } 
	\left\| \BE_h- \widetilde \BE_k  \right\|_{\BLT (B_{R _ \tau  } \cap \Omega)} &\le \sum_{\ell=1}^{C_L}\min\limits_{ \widetilde \BE_{k, \ell}   \in \BV_{k, \ell}  } 
	\left\| \BE_h- \widetilde \BE_{k, \ell}   \right\|_{\BLT (B_{R _ {\tau _\ell}  } \cap \Omega)}\\
	 & \lesssim C_L {q }  ^{k}\norm{\BPi^{L^2}_h \BF}_ {\BLT (\Omega)}.
	\end{align*}
The dimension of $\BV_k$ is bounded by  
\begin{align*}
\operatorname{dim}  \BV _{k  } \le C_L \,C  _{\dim}^{\prime\prime\prime} k \left(\frac{k}{\zeta}\right)^3 \left(q'^{-3} + \ln^4\frac{k}{\zeta}\right),
\end{align*}
which concludes the proof.
\qed
   \end{proof}
The following result allows us to transfer the approximation result of Theorem~\ref{pro:Hmatrix} to the matrix level. We recall that the system matrix $\BA$ 
is given by (\ref{eq:stiffness-matrix}).
\begin{lemma} \label{th: low rank blockwise matrix approximation of inverses} Let $h \leq h_0$ with $h_0$ given by Lemma~\ref{Lem. stability}. 
Then, there are constants $\widetilde C  _{\dim}$, $\widehat C _ {\rm  app}$ that depend only on $\kappa$, $\Omega$,  and  the 
	$\gamma$-shape regularity of the quasi-uniform triangulation $\T _h$ such that 
        for $\eta >0$, $q \in (0,1)$, $k \in \mathbb{N}$, and $\eta$-admissible cluster pairs 
	$(\tau , \sigma)$ there exist matrices $\BX _ {\tau \sigma} \in \C ^ {\tau\times r}$, $\BY _{\tau  \sigma } \in \C ^ {\sigma \times r}$ of rank $r \le \widetilde C  _{\dim} \left( {1+\eta}\right)^3{k }^4 \left( q ^{-3} + \ln^4 (k(1+\eta))\right) $ such that
	\begin{align*}
	\norm{\mathbf{A}^{-1} |_{\tau\times \sigma}-
		\mathbf{X}_{\tau\sigma}\mathbf{Y}_{\tau\sigma}^H}_2 \le \widehat C _ {\rm  app} h ^{-1}{q}^k.
	\end{align*}
\end{lemma}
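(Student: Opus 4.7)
The plan is to translate the function-level approximation of Lemma~\ref{H-Matrix ApproximationL5} into a matrix-level low-rank approximation, the factor $h^{-1}$ in the error arising from two $h^{-1/2}$ losses (one from the mass-matrix-based inverse estimate linking coefficient norms to $\BL^2$-norms, one from the $\BL^2$-dual basis used as right-hand side). For $\Bb \in \C^\sigma$, let $\hat\Bb \in \C^N$ be the extension of $\Bb$ by zero and let $\BE_h^{\hat\Bb} \in \BX_{h,0}(\T_h,\Omega)$ be the discrete function whose coefficient vector equals $\BA^{-1}\hat\Bb$, so that $a(\BE_h^{\hat\Bb},\Psi_j) = \hat b_j$ for all $j$; the $j$-th coefficient of $\BE_h^{\hat\Bb}$ then coincides with the $j$-th entry of $(\BA^{-1}|_{\tau\times\sigma})\Bb$ for every $j\in\tau$. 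Setting $\zeta:= 1/(1+\eta)$ as in Theorem~\ref{pro:Hmatrix}, the $\eta$-admissibility gives $\operatorname{dist}(B_{(1+\zeta)R_\tau},B_{R_\sigma})>0$, so any test function $\Psi_j$ with $\operatorname{supp}\Psi_j\subset\overline{B_{(1+\zeta)R_\tau}\cap\Omega}$ is disjoint from every $\operatorname{supp}\Psi_i$ with $i\in\sigma$; hence $a(\BE_h^{\hat\Bb},\Psi_j) = 0$ for such $\Psi_j$ and $\BE_h^{\hat\Bb} \in \CH_{c,h}(B_{(1+\zeta)R_\tau}\cap\Omega)$.

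I would then invoke Lemma~\ref{H-Matrix ApproximationL5} (supplemented by the subbox decomposition used in the proof of Theorem~\ref{pro:Hmatrix} to deal with $R_\tau > R_{\rm max}/2$) to produce a subspace $\BV_k$, independent of $\Bb$ and of the claimed dimension, together with an approximation $\widetilde\BE_k \in \BV_k$ satisfying
\[
\|\BE_h^{\hat\Bb}-\widetilde\BE_k\|_{\BL^2(B_{R_\tau}\cap\Omega)} \le R_\tau\, q^k\,\vertiii{\BE_h^{\hat\Bb}}_{c,h,(1+\zeta)R_\tau} \lesssim q^k\,\|\BE_h^{\hat\Bb}\|_{\BH(\operatorname{curl},\Omega)},
\]
the second inequality exploiting $h \lesssim R_\tau$ in the definition of $\vertiii{\cdot}_{c,h,R}$. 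To bound the right-hand side by $\|\Bb\|_2$, I would apply Lemma~\ref{Lem. stability} to the specific datum $\BF := \sum_{i\in\sigma} b_i \widetilde\Psi_i$, where $\widetilde\Psi_i := \sum_k (\mathbf{M}^{-1})_{ki}\Psi_k$ denotes the $\BL^2$-dual basis of $\{\Psi_j\}$ in $\BX_{h,0}$; this choice is made so that $(\BF,\Psi_j)_{\BL^2} = \hat b_j$ for every $j$, and the corresponding Galerkin solution therefore coincides with $\BE_h^{\hat\Bb}$. Since $\BF \in \BX_{h,0}$ we have $\BPi^{L^2}_h\BF = \BF$, and the identity $\|\BF\|_{\BL^2(\Omega)}^2 = \Bb^H\,(\mathbf{M}^{-1})|_{\sigma\times\sigma}\,\Bb \le \|\mathbf{M}^{-1}\|_2\,\|\Bb\|_2^2 \lesssim h^{-1}\|\Bb\|_2^2$, using $\lambda_{\min}(\mathbf{M})\sim h$ for lowest-order edge elements on a quasi-uniform tetrahedral mesh, gives $\|\BE_h^{\hat\Bb}\|_{\BH(\operatorname{curl},\Omega)} \lesssim h^{-1/2}\|\Bb\|_2$, so altogether
\[
\|\BE_h^{\hat\Bb}-\widetilde\BE_k\|_{\BL^2(B_{R_\tau}\cap\Omega)} \lesssim q^k\,h^{-1/2}\,\|\Bb\|_2.
\]

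Finally, I would pass to the coefficient level. Since $\BV_k \subset \CH_{c,h}(B_{R_\tau}\cap\Omega)$ consists of restrictions of $\BX_{h,0}$-functions, one can (linearly in $\Bb$) choose a lift $\widetilde\BE_k^{\rm ext} \in \BX_{h,0}$ of each $\widetilde\BE_k$ and read off its coordinates at indices $j\in\tau$; the resulting map $\Bb \mapsto \widetilde\Bx \in \C^\tau$ has rank at most $\dim\BV_k$ and therefore factors as $\BX_{\tau\sigma}\BY_{\tau\sigma}^H$ of the claimed rank. Using $\operatorname{supp}\Psi_j \subset B_{R_\tau}$ for $j\in\tau$ together with $\lambda_{\min}(\mathbf{M}|_{\tau\times\tau})\gtrsim h$ yields the inverse estimate $\|\By|_\tau\|_2 \lesssim h^{-1/2}\|\sum_{j\in\tau}y_j\Psi_j\|_{\BL^2}$, which combined with the previous bound produces
\[
\|(\BA^{-1}|_{\tau\times\sigma} - \BX_{\tau\sigma}\BY_{\tau\sigma}^H)\Bb\|_2 \lesssim h^{-1/2}\,\|\widetilde\BE_k - \BE_h^{\hat\Bb}\|_{\BL^2(B_{R_\tau}\cap\Omega)} \lesssim q^k\,h^{-1}\,\|\Bb\|_2,
\]
and taking the supremum over $\Bb$ gives the assertion. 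The main obstacle is this last step: one has to arrange that the $\BL^2$-error of the function-level approximation on $B_{R_\tau}\cap\Omega$ genuinely controls the Euclidean norm of the $\tau$-indexed coefficient error. The difficulty is that the lift $\widetilde\BE_k^{\rm ext}$ may carry nonzero contributions from basis functions $\Psi_j$ with $j \notin \tau$ whose supports straddle $\partial B_{R_\tau}$, so the difference $\widetilde\BE_k - \BE_h^{\hat\Bb}|_{B_{R_\tau}\cap\Omega}$ mixes these boundary-layer contributions with the genuine $\tau$-indexed error; isolating the $\tau$-part, either by a careful choice of lift or by working with a slightly enlarged bounding box, is the technical heart of the argument.
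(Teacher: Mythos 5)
Your overall strategy matches the paper's: translate the low-dimensional approximation of $\CH_{c,h}$-functions into a low-rank factorisation of $\BA^{-1}|_{\tau\times\sigma}$, with the two $h^{-1/2}$ losses coming from the right-hand-side stability estimate and from reading coefficients off the $\BL^2$-approximant. Your construction of the datum via the discrete dual basis $\widetilde\Psi_i$ (so that $\BPi^{L^2}_h\BF=\BF$ and $\|\BF\|_{\BL^2}^2=\Bb^H(\mathbf{M}^{-1})|_{\sigma\times\sigma}\Bb\lesssim h^{-1}\|\Bb\|_2^2$) is a legitimate alternative to the paper's for obtaining $\|\BE_h^{\hat\Bb}\|_{\BH(\operatorname{curl})}\lesssim h^{-1/2}\|\Bb\|_2$, and your direct verification that $\BE_h^{\hat\Bb}\in\CH_{c,h}(B_{(1+\zeta)R_\tau}\cap\Omega)$ is essentially the same orthogonality argument that appears in the proof of Theorem~\ref{pro:Hmatrix}.

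However, the last step — which you yourself identify as the crux — is not resolved, and the inverse estimate you state is not the one you need. You write $\|\By|_\tau\|_2\lesssim h^{-1/2}\|\sum_{j\in\tau}y_j\Psi_j\|_{\BL^2}$, which (via $\lambda_{\min}(\mathbf{M}|_{\tau\times\tau})\gtrsim h$) is valid for functions that are linear combinations of $\{\Psi_j\}_{j\in\tau}$ \emph{only}. But the difference $\widetilde\BE_k-\BE_h^{\hat\Bb}$ on $B_{R_\tau}\cap\Omega$ is a finite element function that also carries contributions from $\Psi_j$ with $j\notin\tau$ whose supports overlap $B_{R_\tau}$; your inequality simply does not apply to it. Also, your worry that ``the lift may carry contributions from boundary-straddling basis functions'' slightly misidentifies the obstacle: the $\tau$-indexed coefficients \emph{are} uniquely determined by the restriction to $B_{R_\tau}\cap\Omega$ (each $\Psi_j$, $j\in\tau$, has support in $B_{R_\tau}$, and the edge DOF is local), so the lift ambiguity is harmless. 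What is missing is the estimate $\|(\text{$\tau$-coefficients of }\Bv_h)\|_2\lesssim h^{-1/2}\|\Bv_h\|_{\BL^2(B_{R_\tau}\cap\Omega)}$ for a general local discrete function $\Bv_h$.

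The paper supplies exactly this via a locally supported $\BL^2$-dual basis $\boldsymbol{\lambda}_i$ (discontinuous piecewise polynomials with $\operatorname{supp}\boldsymbol{\lambda}_i$ a single tetrahedron, $\langle\boldsymbol{\lambda}_i,\Psi_j\rangle=\delta_{ij}$, $\|\boldsymbol{\lambda}_i\|_{\BL^2}\lesssim h^{-1/2}$). The coefficient-extraction map $\varLambda_\tau\colon\BL^2(\Omega)\to\C^\tau$, $\Bv\mapsto(\chi_\tau(i)\langle\boldsymbol{\lambda}_i,\Bv\rangle)_i$, is defined on all of $\BL^2$ — no lift is needed — and satisfies the local bound $\|\varLambda_\tau\Bv\|_2\lesssim h^{-1/2}\|\Bv\|_{\BL^2(B_{R_\tau}\cap\Omega)}$ by the local support of the $\boldsymbol{\lambda}_i$ together with finite overlap. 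For a discrete function, $\varLambda_\tau$ indeed returns the $\tau$-coefficients. The same $\boldsymbol{\lambda}_i$ simultaneously serve as the locally supported data $\mathbf{f}_{\mathbf{b}}=\sum_{i\in\sigma}b_i\boldsymbol{\lambda}_i$ (with $\operatorname{supp}\mathbf{f}_{\mathbf{b}}\subset B_{R_\sigma}$, so Theorem~\ref{pro:Hmatrix} can be applied directly), which your global $\widetilde\Psi_i$ do not. Two further small omissions in your outline: the admissibility condition~\eqref{eq:admissibility-condition} only bounds the smaller of the two bounding boxes, so one must first reduce to the case $\operatorname*{diam}(B_{R_\tau})\le\eta\operatorname*{dist}(B_{R_\tau},B_{R_\sigma})$ by exploiting the symmetry $\BA^{-1}|_{\tau\times\sigma}=(\BA^{-1}|_{\sigma\times\tau})^\top$; and one must separate off the ``small block'' case $\dim\BV_k\ge\min(|\tau|,|\sigma|)$, where the exact block is used.
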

   \begin{proof}
As a preliminary step, we show that we can reduce the consideration to the case $\operatorname{diam} B_{R_\tau} \leq \eta \operatorname{dist}(B_{R_\tau}, B_{R_\sigma})$. 
Indeed, as $\BA$ is symmetric also $\BA^{-1}$ is symmetric so that $\BA^{-1}|_{\tau \times \sigma} = \BA^{-1}|_{\sigma \times \tau}$ and one may approximate either
$\BA^{-1}|_{\tau\times\sigma}$ or $\BA^{-1}|_{\sigma\times\tau}$ by a low-rank matrix. In view of the definition of the admissibility condition (\ref{eq:admissibility-condition}), we may therefore assume 
$\operatorname{diam} B_{R_\tau} \leq \eta \operatorname{dist}(B_{R_\tau}, B_{R_\sigma})$. 

   	The matrices $\BX_{\tau\sigma}$ and $\BY_{\tau\sigma}$ will be constructed with the aid of Theorem~\ref{pro:Hmatrix}. In particular, let 
the constant $\widetilde C  _{\dim}$ be given from Theorem~\ref{pro:Hmatrix}. We distinguish between the cases of ``small'' blocks and ``large'' blocks. 
\newline
   	\textbf{Case 1. }  If $ \widetilde C  _{\dim} (1+\eta)^{3} {k }^4 \left( q ^ {-3} + \ln^4 (k(1+\eta))\right)  \ge \min (|\tau|, |\sigma|)$, we use the exact matrix block $\mathbf{X}_{\tau \sigma}= \mathbf{A^{-1}}|_ {\tau \times \sigma}$ and we put $\mathbf{Y}_{\tau \sigma}= \mathbf{I}|_{\sigma\times\sigma}$ with $\mathbf{I} \in \C^{N \times N}$ being the identity matrix.  \\
   	\textbf{Case 2. }  If
   	$  \widetilde C  _{\dim}  (1+\eta)^{3}{k }^4 \left( q ^ {-3} + \ln^4 (k(1+\eta))\right)  < \min (|\tau|, |\sigma|)$, let $\BV_k $ 
        be the space constructed in Theorem~\ref{pro:Hmatrix}. From $\BV_k$ we construct $\BX_{\tau\sigma}$ and $\BY_{\tau\sigma}$ in the following two steps. 
   	
   	 	\textbf{Step 1.} 
Let  functions 
$\boldsymbol{\lambda} _i \in \BLT(\Omega)$, $i=1,\ldots,N$, satisfy 
\begin{subequations}
\label{eq:lambda_i} 
\begin{alignat}{2}
\label{eq:lambda_i-a}
 \operatorname{supp} \boldsymbol{\lambda}_i & \subset \operatorname{supp} \Psi_i, && \qquad i=1,\ldots,N, \\
\label{eq:lambda_i-b}
 \langle \boldsymbol{\lambda}_i,\Psi_j\rangle_{\BLT(\Omega)}  &= \delta_{ij}, &&\qquad i,j = 1,\ldots,N, \\
\label{eq:lambda_i-c}
 \|\boldsymbol{\lambda}_i\|_{\BLT(\Omega)} &\leq C h^{-1/2}, && \qquad i=1,\ldots,N. 
\end{alignat}
\end{subequations}
Such a dual basis of $\{\Psi_i\colon i=1,\ldots,N\}$ can be constructed as (discontinuous) piecewise polynomials of degree $1$ as described 
in, e.g., \cite[Sec.~{4.8}]{brenner-scott08} for classical Lagrange elements. In fact, $\operatorname{supp} \boldsymbol{\lambda}_i$ can be taken to be a single tetrahedron in $\operatorname{supp} \Psi_i$. 
The constant $C$ depends solely on the $\gamma$-shape regularity of $\T_h$. We emphasize that our choice of 
scaling of the functions $\Psi_i$ is responsible for the factor $h^{-1/2}$. 

 For clusters $\tau'$, define the mappings
 \begin{alignat*}{2}
 \varLambda _{\tau'} & \colon \BLT(\Omega)  \rightarrow
 \mathbb{C}^{\tau'}, && \quad 
 \mathbf{v} \mapsto \left( \chi_{\tau'}(i) \langle \boldsymbol{\lambda}_i, \Bv\rangle_{\BLT(\Omega)}\right)_{i \in \mathcal{I}}, 
 \end{alignat*}
where $\chi_{\tau'}$ is the characteristic function of $\tau'$. 
For $\Bv \in \BLT(\Omega)$ and a cluster $\tau'$ with bounding box $B_{R_{\tau'}}$, 
we observe for the $\ell^2$-norm $\|\cdot\|_2$ on $\C^{\tau'}$ that 
\begin{align}
\label{eq:Lambda-tau'}
\|\varLambda_{\tau'} \Bv\|^2_{2} &  = 
\sum_{i \in \tau'} |\langle \boldsymbol{\lambda}_i, \Bv\rangle_{\BLT(\Omega)}|^2 \leq 
\sum_{i \in \tau'} \|\boldsymbol{\lambda}_i\|^2_{\BLT(\Omega)} \| \Bv\|^2_{\BLT(\operatorname{supp} \boldsymbol{\lambda}_i)} 
\stackrel{ (\ref{eq:lambda_i-c})} \lesssim h^{-1} \|\Bv\|^2_{\BLT(B_{R_{\tau'}\cap\Omega})}. 
\end{align}
We observe that, for $\BE_h \in \BX_{h,0}(\T _h,\Omega)$ expanded as $\BE_h = \sum_{i \in \mathcal{I}} \mu_i \Psi_i$, we have 
$\mu_i = (\varLambda_{\mathcal{I}} (\BE_h))_i$. In particular, we have for the coefficients $\mu_i$ with $i \in \tau'$ 
\begin{equation}
\label{eq:lambda-rep}
\mu_i = (\varLambda_{\tau'}(\BE_h) )_i 
\qquad \forall i \in \tau'. 
\end{equation}

\textbf{Step 2:} 
Let $\BV_k$ be the space given by Theorem~\ref{pro:Hmatrix} for the boxes $B_{R_\tau}$, $B_{R_\sigma}$. 
For arbitrary $\mathbf{b} \in \C^{\sigma}$, define the function 
$
\mathbf{f}_{\mathbf{b}}:= \sum_{i \in \sigma} \mathbf{b}_i \boldsymbol{\lambda}_i 
$ 
and observe: 
\begin{subequations}
\label{eq:fb}
\begin{align}
\label{eq:fb-a}
 \operatorname{supp} \mathbf{f}_{\mathbf{b}} & \stackrel{(\ref{eq:lambda_i-a})}{\subset} B_{R_\sigma}, \\
\label{eq:fb-b}
\|\mathbf{f}_{\mathbf{b}}\|_{\BLT(\Omega)} & \stackrel{(\ref{eq:Lambda-tau'})}{\lesssim} h^{-1/2} \|\mathbf{b}\|_2, \\
\label{eq:fb-c}
\langle \mathbf{f}_{\mathbf b}, \Psi_i\rangle_{\BLT(\Omega)} & \stackrel{(\ref{eq:lambda_i-b})}{=} \mathbf{b}_i, \qquad i=1,\ldots,N. 
\end{align}
\end{subequations}
Let $\BE _h \in \BX_{h,0} (\T _h,\Omega)$ be  the  Galerkin solution corresponding to the right-hand side $\mathbf{f}_{\mathbf{b}}$
and $\widetilde \BE_h \in \BV_k$ be the approximation to $\BE_h$ asserted in Theorem~\ref{pro:Hmatrix}. 
    Then, 
    \begin{align}
    \nonumber \left\| \varLambda_{\tau} \BE_h-\varLambda_{\tau}\widetilde{\BE}_k\right\|_2
    &\stackrel{ (\ref{eq:Lambda-tau'})}{ \lesssim }
h ^ {-1/2} \left\| \BE_h- \widetilde{\BE}_h\right\| _ {\BLT (B_{R_{\tau}}\cap \Omega)}  \\
    &\nonumber \stackrel{\text{Thm.~\ref{pro:Hmatrix}}}{\lesssim }  h^{-1/2}{q}^k \!      \left\|\mathbf{f}_{\mathbf{b}}\right\|_ {\BLT (\Omega)}
\stackrel{ (\ref{eq:fb-b})}{ \lesssim } h^{-1}{q}^k \norm{\mathbf{b}}_2.
    \end{align}
    We define the low-rank factor $\mathbf{X}_{\tau \sigma}$ as an orthogonal basis of the space 
     $\mathcal{V} _\tau := \{\varLambda_{\tau}(\widetilde{\BE}_k )\;\colon \; \widetilde{\BE}_k \in \BV_k\}$
    and set
     $\mathbf{Y _{\tau \sigma}}:=\mathbf{A}^{-1} |_{\tau\times \sigma}^ H\mathbf{X _{\tau \sigma}}$.
     Then, the rank of $\mathbf{X _{\tau \sigma}}$  is bounded by 
     $\dim \BV_{k} \le  \widetilde C  _{\dim}  (1+\eta)^{3}  {k }^4 \left(q ^ {-3}+ \ln^4 (k(1+\eta))\right) $.
     Since $\mathbf{X _{\tau \sigma}} \mathbf{X} _{\tau \sigma} ^H$ is the orthogonal projection from $\C ^{N}$ onto $\mathcal{V} _ \tau$, we conclude 
     that  $\Bz := \mathbf{X}_{\tau \sigma}\mathbf{X}_{\tau \sigma}^H(\varLambda_{\tau} \BE _h)$ is the $\|\cdot\|_2$-best approximation 
     of the Galerkin solution in $\mathcal{V} _\tau$, which results in 
\begin{align*}
\nonumber \norm{\varLambda_{\tau}\BE _h-\Bz}_2\lesssim
\left\| \varLambda_{\tau} \BE_h-\varLambda_{\tau}\widetilde{\BE}_h\right\|_2
\lesssim h^{-1}{q  }^k\norm{\mathbf{b}}_2.
\end{align*}
By (\ref{eq:lambda-rep}) and $\mathbf{b} \in \C^\sigma$,  we have 
\begin{align*}
\varLambda_\tau \BE_h  \stackrel{(\ref{eq:lambda-rep})}{=} (\varLambda_{\mathcal{I}} \BE_h)|_\tau = 
(\BA^{-1} \mathbf{b})|_\tau 
\stackrel{\mathbf{b} \in \C^\sigma}{= }
(\BA^{-1}|_{\tau\times\sigma})\mathbf{b}. 
\end{align*}
Since  
$\Bz = \mathbf{X}_{\tau\sigma}\mathbf{Y}_{\tau\sigma}^H\mathbf{b}$, we conclude
\begin{align*}
\norm{(\mathbf{A}^{-1} |_{\tau\times \sigma}-
	\mathbf{X}_{\tau\sigma}\mathbf{Y}_{\tau\sigma}^H)\mathbf{b}}_2 
=\norm{\varLambda_{\tau}\BE _h-\Bz}_2
\lesssim  h^{-1}{q  }^k\norm{\mathbf{b}}_2.
\end{align*}
As $\mathbf{b}$ was arbitrary, we obtain the stated norm bound. 
\qed
     \end{proof}
  \begin{proof}[Proof of Theorem \ref{th:H-Matrix approximation of inverses}]
	For each admissible cluster pair $(\tau,\sigma)$, let the matrices $\BX_{\tau\sigma}$, $\BY_{\tau\sigma}$ be given
by Lemma~\ref{th: low rank blockwise matrix approximation of inverses}. 
Define the $\mathcal{H}$-matrix approximation $\BB_{\mathcal{H}}$ by the conditions 
$$
\BB_{\mathcal{H}}|_{\tau\times\sigma} = \BX_{\tau\sigma} \BY_{\tau\sigma}^H \quad \mbox{ if $(\tau,\sigma) \in P_{\rm far}$}, 
\qquad 
\BB_{\mathcal{H}}|_{\tau\times\sigma} = \BA^{-1}|_{\tau\times \sigma}  \quad \mbox{ if $(\tau,\sigma) \in P_{\rm near}$}. 
$$
The blockwise estimate of Lemma~\ref{th: low rank blockwise matrix approximation of inverses}  for $q \in (0,1)$ and  \cite[Lemma~6]{borm2010efficient} yield 
	\begin{align*}
	\norm{\mathbf{A}^{-1}-\BB_ {\mathcal{H}}}_2 &\leq 
	C_{\rm sp} \left(\sum_{\ell=0}^{\infty}\max\{\norm{(\mathbf{A}^{-1}-\BB_ {\mathcal{H}})|_{\tau\times \sigma}  }_2 \colon (\tau,\sigma) \in P, {\rm level}(\tau) = \ell\}\right)\\
& \le 	\widehat C_{\rm  app} C_{\rm sp} \operatorname*{depth}(\mathbb{T}_{\mathcal{I}}) 
	 h^{-1}{q } ^k .
	\end{align*}
We next relate $k$ to the blockwise rank $r$. 
	For $y\ge 0$, the unique (positive) solution $k$ of $k \ln k = y$ has the form 
	\begin{equation}
	\label{eq:asymptotics-for-k}
	k = \frac{y}{\log y} (1 + o(1)) \qquad \mbox{ as $y \rightarrow \infty$}
	\end{equation}
	by, e.g., \cite[Ex.~{5.7}, Chap.~1]{olver74}. In passing, we mention that even higher order asymptotics can directly be inferred from
	the asymptotics of Lambert's $W$-function as described in \cite[p.~{25--27}]{debruijn61} or \cite[Eq.~(4.13.10)]{NIST:DLMF}. 
        The asymptotics (\ref{eq:asymptotics-for-k})
	implies that the solution $k$ of $k^4 \ln^4 k = y$ satisfies $k = y^{1/4}/\ln (y^{1/4}) (1+ o(1))$ as $y\rightarrow \infty$. 
	
	{}From Lemma~\ref{th: low rank blockwise matrix approximation of inverses}  we have  the rank bound 
 $r \le  \widetilde C  _{\dim}  (1+\eta)^{3}  {k }^4 \left( q ^ {-3} + \ln^4 (k(1+\eta))\right)  
    \le \widetilde C  _{\dim}\left( (1+\eta)q ^ {-1}\right)^{3}k^4 \ln^4 k $, 
so that, for suitable $b$, $C > 0$ independent of $r$, we get $q ^k \le  C \exp(-b  r^{1/4}/\ln r )$. Consequently,  we have
		\begin{align*}
	\left\|\mathbf{A}^{-1} -\mathbf{B}_{\mathcal{H}}
	\right\|_2 \le C_{\rm  apx} C_{\rm sp} \operatorname*{depth}(\mathbb{T}_{\mathcal{I}}) 
	h ^{-1} e^{-b \left( r^{1/4}/\ln r\right) },
	\end{align*}
	which concludes the proof.
\qed
\end{proof}
%
   \appendix
   \section{Regular decompositions}
\label{sec:appendix}
   The following lemma follows from the seminal paper \cite{costabel-mcintosh10}. 
   The notation follows \cite{costabel-mcintosh10} in that 
   $H^s_{\overline{\Omega}}({\mathbb R}^3)$, $ s \in \R$ denotes the spaces of distributions in $H ^s (\R ^3)$
   supported by $\overline{\Omega}$, and that $C^\infty_{\overline{\Omega}}({\mathbb R}^3)$
   is the space of $C^\infty({\mathbb R}^3)$-functions supported by $\overline{\Omega}$.

   We introduce the space 
   $$
   {\BH}^s_{\overline{\Omega}}(\operatorname{curl}) := 
   \{\BE \in {\BH}_{\overline{\Omega}}^s({\mathbb R}^3)\;\colon\;
   \nabla \times  \BE \in \BH_{\overline{\Omega}}^s({\mathbb R}^3)\}
   $$ 
   equipped with the norm 
   $\|\BE\|_{{\BH}_{\overline{\Omega}}^s(\operatorname{curl})}:= 
   \|\BE\|_{\BH_{\overline{\Omega}}^s({\mathbb R}^3)}+ 
   \|\nabla \times \BE\|_{\BH_{\overline{\Omega}}^s({\mathbb R}^3)}
   $.
   \begin{remark}
   	{}From \cite[p.~{301}]{costabel-mcintosh10}, for any $s \in {\mathbb R}$, 
   	the space $\BH^s_{\overline{\Omega}}({\mathbb R}^3)$
   	is naturally isomorphic to the dual space of $\BH^{-s}(\Omega)$. 
   	Hence, for $s \ge 0$, we have the alternative norm equivalence 
   	$\|{\mathbf v}\|_{\BH_{\overline{\Omega}}^s({\mathbb R}^3)} \sim  
   	\|{\mathbf v}\|_{\widetilde{\BH}^s(\Omega)} = 
   	\|{\mathbf v}^\star\|_{{\BH}^s({\mathbb R}^3)}$, where  
   	${\mathbf v}^\star$ is the zero extension of a function ${\mathbf v}$ defined on $\Omega$. 
   	\eremk
   \end{remark}
   \begin{lemma}
   	\label{LemRs}Let $\Omega$ be a bounded Lipschitz domain. There exist 
pseudodifferential operators $T_1$ and  $\mathbf{T}_2$  of order $-1$ and a 
pseudodifferential operator $\mathbf{L}$ of order $-\infty$ on $\R^3$ with the following properties: 
For each $ s \in \mathbb{R}$, they have the mapping properties 
${{T}}_{1}:\mathbf{H}_{\overline{\Omega}}^{s}%
	({\mathbb R}^3)\rightarrow {H}_{\overline{\Omega}}^{s+1}\left(  {\mathbb R}^3\right)$,
${\mathbf{T}}_{2}:\mathbf{H}_{\overline{\Omega}}^{s}%
   	({\mathbb R}^3)\rightarrow\mathbf{H}_{\overline{\Omega}}^{s+1}\left(  {\mathbb R}^3\right)$, 
and 
$\mathbf{L}:\mathbf{H}_{\overline{\Omega}}^{s}(
   	{\mathbb R}^3)  \rightarrow \mathbf{C}^{\infty}_{\overline{\Omega}}\left(  {\mathbb R}^3\right)$
and for any 
   	$\mathbf{u}\in\mathbf{H}_{\overline{\Omega}}^{s}\left(\operatorname*{curl}\right)$, there holds
   	the representation 
   	\begin{equation}
   	\mathbf{u}=\nabla T_{1}\left(  \mathbf{u}-\mathbf{T}_{2}\left(
   	\nabla \times \mathbf{u}\right)  \right)  +\mathbf{T}_{2}\left(
   	\nabla \times \mathbf{u}\right)  +\mathbf{Lu}. \label{repu}%
   	\end{equation}
   \end{lemma}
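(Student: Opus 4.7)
The plan is to extract the operators $T_1$, $\mathbf{T}_2$, $\mathbf{L}$ directly from the regularized Poincar\'e maps constructed by Costabel and McIntosh in \cite{costabel-mcintosh10}. The representation \eqref{repu} encodes a Helmholtz-type decomposition $\mathbf{u} = \nabla p + \mathbf{v}$ in which $\mathbf{v}$ is a right inverse of the curl applied to $\nabla \times \mathbf{u}$ and $p$ is a right inverse of the gradient applied to the irrotational remainder $\mathbf{u}-\mathbf{v}$. The nontrivial content is that both right inverses can be realized by order $-1$ pseudodifferential operators that preserve the support condition ``contained in $\overline{\Omega}$''.

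First, I would invoke the Costabel-McIntosh construction of a regularized Poincar\'e-type operator $\mathbf{R}_{\operatorname{curl}}$ satisfying $\nabla\times \mathbf{R}_{\operatorname{curl}} \mathbf{f} = \mathbf{f}$ whenever $\operatorname{div}\mathbf{f}=0$, with mapping property $\mathbf{H}^s_{\overline{\Omega}}(\mathbb{R}^3)\to \mathbf{H}^{s+1}_{\overline{\Omega}}(\mathbb{R}^3)$ for every $s \in \mathbb{Z}$, and set $\mathbf{T}_2 := \mathbf{R}_{\operatorname{curl}}$. Since $\nabla\times\mathbf{u}$ is automatically divergence-free in the sense of distributions, the field $\mathbf{w}:= \mathbf{u}-\mathbf{T}_2(\nabla\times\mathbf{u})$ satisfies $\nabla\times \mathbf{w} = \mathbf{L}_1 \mathbf{u}$, where $\mathbf{L}_1$ is a smoothing remainder inherent in the Costabel-McIntosh construction. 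Next, I would apply the analogous scalar right inverse $R_{\nabla}$ of the gradient (a regularized Bogovski\u{\i}-type operator) to $\mathbf{w}$, setting $T_1 := R_{\nabla}$. Again by \cite{costabel-mcintosh10}, $T_1$ is an order $-1$ operator with the stated support-preserving mapping property and satisfies $\nabla T_1 \mathbf{w} = \mathbf{w} + \mathbf{L}_2 \mathbf{w}$ with $\mathbf{L}_2$ smoothing.

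Combining the two identities yields
\begin{equation*}
\nabla T_1(\mathbf{u}-\mathbf{T}_2(\nabla\times\mathbf{u}))+\mathbf{T}_2(\nabla\times\mathbf{u}) = \mathbf{u}+\mathbf{L}_2 \mathbf{w},
\end{equation*}
so defining $\mathbf{L}:=-\mathbf{L}_2(\operatorname{Id}-\mathbf{T}_2\,\nabla\times)$ gives \eqref{repu}. Since $\mathbf{L}_2$ is smoothing and the composition with an order $-1$ operator remains smoothing, $\mathbf{L}$ is of order $-\infty$, hence maps into $\mathbf{C}^\infty_{\overline{\Omega}}(\mathbb{R}^3)$ by standard pseudodifferential calculus, and the three mapping properties follow from the orders being $-1$, $-1$, and $-\infty$.

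The principal obstacle is not the algebraic identity but the support-preservation in the very weak spaces $\mathbf{H}^{s}_{\overline{\Omega}}(\mathbb{R}^3)$ for negative $s$: a generic right inverse of $\operatorname{curl}$ on $\mathbb{R}^3$ (for example, the one obtained by convolution with the Newton potential) does not send distributions supported in $\overline{\Omega}$ back into the same class. This is precisely what the Costabel-McIntosh construction achieves by averaging the classical Poincar\'e-type formulas over a smooth family of base points chosen inside $\Omega$, which localizes the fundamental solution support appropriately; I would therefore quote their explicit construction and cite the corresponding mapping statements rather than reprove them, verifying only that the resulting operators assemble into \eqref{repu} as described above.
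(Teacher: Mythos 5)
Your proposal follows essentially the same route as the paper: both quote the operators and homotopy relations from \cite{costabel-mcintosh10} and then assemble the representation \eqref{repu} by pure algebra, collecting all smoothing remainders into $\mathbf{L}$. One small internal inconsistency to flag: you first state $\nabla\times\mathbf{R}_{\operatorname{curl}}\mathbf{f}=\mathbf{f}$ exactly on divergence-free fields, but two lines later correctly acknowledge that $\nabla\times\mathbf{w}$ is only smoothing rather than zero --- on a general Lipschitz domain the Costabel--McIntosh operators invert $\nabla\times$ only up to a smoothing error, which is precisely what the second homotopy relation \eqref{RSrelation-b} encodes, and the paper's proof uses that relation explicitly to produce $\mathbf{L}_3\mathbf{u}=\mathbf{T}_2(\mathbf{L}_2(\nabla\times\mathbf{u}))$; your argument goes through unchanged once this is stated consistently.
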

   \begin{proof}
   	In \cite[Theorem~{4.6}]{costabel-mcintosh10}, operators $T_1$,
   	$\mathbf{T}_{2}$, $\mathbf{T}_{3}$, $\mathbf{L}_{1}$, $\mathbf{L}_{2}$ with
   	the mapping properties%
   \begin{align*}
    	T_{1}&:\mathbf{H}_{\overline{\Omega}}^{s}({\mathbb R}^3)\rightarrow {H}_{\overline{\Omega}}^{s+1}\left(
    	{\mathbb R}^3\right),\\	
   	\mathbf{T}_{2}&:\mathbf{H}_{\overline{\Omega}}^{s}({\mathbb R}^3)\rightarrow\mathbf{H}_{\overline{\Omega}}^{s+1}\left(
   	{\mathbb R}^3\right),\\
   	\mathbf{T}_{3}&:H_{\overline{\Omega}}^{s}({\mathbb R}^3)\rightarrow\mathbf{H}_{\overline{\Omega}}^{s+1}\left({\mathbb R}^3\right)
   	, \\
   	\mathbf{L}_{\ell}&:\mathbf{H}_{\overline{\Omega}}^{s}\left({\mathbb R}^3\right)  \rightarrow
   	\mathbf{C}_{\overline{\Omega}}^{\infty}\left(  {\mathbb R}^3\right)  , \qquad  \ell=1,2, 
  \end{align*} 	
   	are defined, and it is shown that%
\begin{subequations}
\label{RSrelation}%
   	\begin{align}
\label{RSrelation-a}%
   	\nabla T_{1}  \mathbf{v}  +\mathbf{T}_{2}\left(
   	\nabla \times \mathbf{v}\right)   &  =\mathbf{v}-\mathbf{L}%
   	_{1}\mathbf{v},
\\
\label{RSrelation-b}%
   	\nabla \times \mathbf{T}_{2}  \mathbf{v}  +\mathbf{T}%
   	_{3}\left(  \nabla \cdot \mathbf{v}\right)   &  =\mathbf{v}%
   	-\mathbf{L}_{2}\mathbf{v}.
   	\end{align}
\end{subequations}
   	Taking $\mathbf{v}=\mathbf{u}-\mathbf{T}_{2}\left(
   	\nabla \times \mathbf{u}\right)  $ in (\ref{RSrelation-a}), we obtain 
\begin{align}
   	\nabla {T}_{1}\left(  \mathbf{u}-\mathbf{T}_{2}\left(
 \nabla \times \mathbf{u}\right)  \right)  +\mathbf{T}_{2}\left(
 \nabla \times \left(  \mathbf{u}-\mathbf{T}_{2}\left(
\nabla \times \mathbf{u}\right)  \right)  \right)  
   	=\mathbf{u}%
   	-\mathbf{T}_{2}\left(  \nabla \times \mathbf{u}\right)  -\mathbf{L}%
   	_{1}\left(  \mathbf{u}-\mathbf{T}_{2}\left(  \nabla \times %
   	\mathbf{u}\right)  \right)  .\label{gradR1}%
\end{align}
   	Since $\nabla \times \mathbf{u}$ is divergence free, we obtain from (\ref{RSrelation-b}) 
        with the choice $\mathbf{v}  = \nabla \times \mathbf{u}$ 
   	\begin{align*}
   	\mathbf{T}_{2}\left( \nabla \times \left(  \mathbf{u}-\mathbf{T}%
   	_{2}\left(  \nabla \times \mathbf{u}\right)  \right)  \right)   &
   	=\mathbf{T}_{2}\left(  \nabla \times \mathbf{u}\right)  -\mathbf{T}%
   	_{2}\left(  \nabla \times \mathbf{u}-\mathbf{L}_{2}\nabla \times %
   	\mathbf{u}\right)  \\
   	&  =\mathbf{T}_{2}\left(  \mathbf{L}_{2}\left(  \nabla \times %
   	\mathbf{u}\right)  \right)  =:\mathbf{L}_{3}\mathbf{u},
   	\end{align*}
   	where, again, $\mathbf{L}_{3}$ is a smoothing operator of order $-\infty$
   	mapping into $\mathbf{C}^\infty_{\overline{\Omega}}({\mathbb R}^3)$.  
   	Inserting this into (\ref{gradR1}) leads to%
   	\[
   	\nabla T_{1}\left(  \mathbf{u}-\mathbf{T}_{2}\left(
 \nabla \times \mathbf{u}\right)  \right)  +\mathbf{T}_{2}\left(
\nabla \times \mathbf{u}\right)  =\mathbf{u}-\mathbf{L}_{1}\left(
   	\mathbf{u}-\mathbf{T}_{2}\left(  \nabla \times \mathbf{u}\right)
   	\right)  -\mathbf{L}_{3}\mathbf{u}.
   	\]
   	Choosing $\mathbf{Lu}:=\left(  \mathbf{L}_{1}\left(  \mathbf{u}%
   	-\mathbf{T}_{2}\nabla \times \mathbf{u}\right)  \right)  +\mathbf{L}%
   	_{3}\mathbf{u}$, we arrive at the representation (\ref{repu}). 
\qed
   \end{proof}

   \begin{corollary}
   	\label{cor:regular-decomp}
   	Let $\Omega \subset {\mathbb R}^3$ be a bounded Lipschitz domain. Then, 
   	for every $s \ge 0$, there is a constant $C$ (depending only on $\Omega$ and 
   	$s$) such that every $\Bu \in \BH_0^s(\operatorname{curl},\Omega)$ can 
   	be decomposed as $\Bu= \Bz + \nabla p$ with 
   	$\Bz \in\BH_{\overline{\Omega}}^{s+1}({\mathbb R}^3)$ and 
   	$p \in {H}_{\overline{\Omega}}^{s+1}({\mathbb R}^3)$ together with 
   	\begin{equation}
   	\label{eq:cor:regular-decomp}
   	\|\Bz\|_{\BH_{\overline{\Omega}}^{s+1}({\mathbb R}^3)} \leq C \|\Bu\|_{\BH_{\overline{\Omega}}^s(\operatorname{curl})}, 
   	\qquad 
   	\|\nabla p\|_{\BH_{\overline{\Omega}}^{s}({\mathbb R}^3)} \leq C \|\Bu\|_{{\BH}_{\overline{\Omega}}^s({\mathbb R}^3)}. 
   	\end{equation}
   \end{corollary}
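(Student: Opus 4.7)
The plan is to read off the decomposition directly from the representation formula provided by Lemma~\ref{LemRs}. Given $\Bu \in \BH_0^s(\operatorname{curl},\Omega)$, extension by zero yields (for $s \ge 0$) an element of $\BH_{\overline{\Omega}}^s(\operatorname{curl})$ with equivalent norms. The identity \eqref{repu} then reads $\Bu = \nabla p + \Bz$ with the natural choice
\[
p := T_1\bigl(\Bu - \mathbf{T}_2(\nabla \times \Bu)\bigr), \qquad \Bz := \mathbf{T}_2(\nabla \times \Bu) + \mathbf{L}\Bu.
\]

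For the first estimate, the mapping properties of the operators given in Lemma~\ref{LemRs} do all the work. Since $\nabla \times \Bu \in \BH^s_{\overline{\Omega}}({\mathbb R}^3)$, the order-$(-1)$ operator $\mathbf{T}_2$ yields $\mathbf{T}_2(\nabla \times \Bu) \in \BH^{s+1}_{\overline{\Omega}}({\mathbb R}^3)$ with norm controlled by $\|\nabla \times \Bu\|_{\BH^s_{\overline{\Omega}}}$; the smoothing operator $\mathbf{L}$ maps $\Bu$ into $\BC^\infty_{\overline{\Omega}}({\mathbb R}^3)$, hence into $\BH^{s+1}_{\overline{\Omega}}({\mathbb R}^3)$, with norm bounded by $\|\Bu\|_{\BH^s_{\overline{\Omega}}}$. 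Adding these gives $\|\Bz\|_{\BH^{s+1}_{\overline{\Omega}}} \lesssim \|\Bu\|_{\BH^s_{\overline{\Omega}}(\operatorname{curl})}$. Similarly, $\Bu - \mathbf{T}_2(\nabla \times \Bu) \in \BH^s_{\overline{\Omega}}({\mathbb R}^3)$, so $p = T_1(\cdot) \in H^{s+1}_{\overline{\Omega}}({\mathbb R}^3)$, and support conditions come for free from the mapping properties.

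The key subtlety, and the only point requiring real thought, is the second bound: we need $\|\nabla p\|_{\BH^s_{\overline{\Omega}}} \lesssim \|\Bu\|_{\BH^s_{\overline{\Omega}}}$ \emph{without} a curl term on the right-hand side. The trick is to avoid estimating $\nabla p$ directly via $T_1$ and instead use the representation itself: $\nabla p = \Bu - \Bz = \Bu - \mathbf{T}_2(\nabla \times \Bu) - \mathbf{L}\Bu$. The dangerous-looking term is $\mathbf{T}_2(\nabla \times \Bu)$, but although $\nabla \times \Bu$ is only $\BH^{s-1}$-regular, $\nabla \times$ is a local differential operator that preserves support in $\overline{\Omega}$, so $\nabla \times \Bu \in \BH^{s-1}_{\overline{\Omega}}({\mathbb R}^3)$ with norm bounded by $\|\Bu\|_{\BH^s_{\overline{\Omega}}}$. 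Since $\mathbf{T}_2$ has order $-1$, the composition $\mathbf{T}_2 \circ (\nabla \times)$ has order $0$ and maps $\BH^s_{\overline{\Omega}} \to \BH^s_{\overline{\Omega}}$ boundedly. Together with the trivial $\BH^s_{\overline{\Omega}}$-boundedness of $\mathbf{L}$, this delivers \eqref{eq:cor:regular-decomp}. The main potential obstacle is merely bookkeeping with the scales in $s$ and making sure the zero-extension interpretation is legitimate in the range $s \ge 0$, which is standard but needs to be stated once.
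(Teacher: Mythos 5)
Your proposal is correct and follows essentially the same route as the paper's own proof: same decomposition read off from the representation formula~\eqref{repu}, and the same key observation that $\mathbf{T}_2 \circ (\nabla\times)$ is an order-$0$ operator, giving the $\nabla p$ bound without a curl term on the right. Your choice to estimate $\nabla p$ via the identity $\nabla p = \Bu - \mathbf{T}_2(\nabla\times\Bu) - \mathbf{L}\Bu$ rather than through the mapping property of $T_1$ (as the paper does) is a cosmetic variant that relies on exactly the same mechanism.
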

   \begin{proof} 
   	{}From Lemma~\ref{LemRs} we can write 
   	${\mathbf u}  = \Bz + \nabla p$ with 
   	\begin{align*}
   	\Bz &:= 
   	\mathbf{T}_{2}\left(
\nabla \times \mathbf{u}\right)  +\mathbf{Lu}, 
   	& 
   	p &:= T_{1}\left(  \mathbf{u}-\mathbf{T}_{2}\left(
\nabla \times \mathbf{u}\right)  \right)  . 
   	\end{align*}
   	The stability estimate for $\Bz$ follows from the mapping properties of 
   	the operators ${\mathbf T}_2$ and ${\mathbf L}$.  
   	The mapping properties of 
   	${\mathbf T}_1$ yield 
   	\begin{align*}
   	\|\nabla p \|_{\BH^s_{\overline{\Omega}}({\mathbb R}^3)} & \lesssim 
   	\|{\mathbf u}  - {\mathbf T}_2(\nabla \times  {\mathbf u})\|_{{\BH^s_{\overline{\Omega}}({\mathbb R}^3)}}
   	\lesssim 
   	\|{\mathbf u} \|_{{\BH^s_{\overline{\Omega}}({\mathbb R}^3)}}
   	+ \|\nabla \times {\mathbf u} \|_{{\BH^{s-1}_{\overline{\Omega}}({\mathbb R}^3)}} 
\lesssim \|{\mathbf u} \|_{{\BH^s_{\overline{\Omega}}({\mathbb R}^3)}}, 
   	\end{align*}
   	where the last step follows from the mapping property 
   	$\nabla \times :\BH^s_{\overline{\Omega}}({\mathbb R}^3)  
   	\rightarrow \BH^{s-1}_{\overline{\Omega}}({\mathbb R}^3)$. 
\qed
   \end{proof}

 \section*{Funding}
Financial support by the Austrian Science Fund (FWF) through the
research program ``Taming complexity in partial differential systems'' (grant SFB F65) for JMM, through grant P 28367-N35 for JMM and MP and by the Deutsche Forschungsgemeinschaft (DFG) under Germany’s
Excellence Strategy within the Cluster of Excellence PhoenixD (EXC 2122, 
	Project ID 390833453) for MP is gratefully acknowledged.

\bibliographystyle{amsalpha}
\bibliography{FEM}   

\end{document}